\newcommand{\E}{\mathbb{E}}
\newcommand{\1}{\textbf{1}}
\newcommand{\zt}{\zeta_\tau}
\newcommand{\var}{{\rm Var}}
\theoremstyle{plain}
\newtheorem{thm}{Theorem}[section]
\newtheorem{lem}[thm]{Lemma}
\begin{document}

\begin{frontmatter}

\title{The Horseshoe Estimator: Posterior Concentration around Nearly Black Vectors} 
\runtitle{The horseshoe estimator: posterior concentration}

\begin{aug}
\author{\fnms{S.L.} \snm{van der Pas}\corref{}\thanksref{t1} \ead[label=e1]{svdpas@math.leidenuniv.nl}}
\address{Mathematical Institute, Leiden University \\ \printead{e1}}
\thankstext{t1}{Research supported by Netherlands Organization for Scientific
Research NWO.}
\author{\fnms{B.J.K.} \snm{Kleijn}\ead[label=e3]{b.kleijn@uva.nl}}
\address{Korteweg-de Vries Institute for Mathematics, University of Amsterdam \\ \printead{e3}}
\and \author{\fnms{A.W.} \snm{van der Vaart\thanksref{t2}}\ead[label=e2]{avdvaart@math.leidenuniv.nl}}
\address{Mathematical Institute, Leiden University \\ \printead{e2}}
\thankstext{t2}{The research leading to these results has received funding from the
European
  Research Council under ERC Grant Agreement 320637.}

\runauthor{S.L. van der Pas, B.J.K. Kleijn and A.W. van der Vaart}
\end{aug}

\begin{abstract}
We consider the horseshoe estimator due to \cite{Carvalho2010} for the multivariate normal mean model in the situation that the mean vector is sparse in the nearly black sense. We assume the frequentist framework where the data is generated according to a fixed mean vector. We show that if the number of nonzero parameters of the mean vector is known, the horseshoe estimator attains the minimax $\ell_2$ risk, possibly up to a multiplicative constant.  We provide conditions under which the horseshoe estimator combined with an empirical Bayes estimate of the number of nonzero means still yields the minimax risk. We furthermore prove an upper bound on the rate of contraction of the posterior distribution around the horseshoe estimator, and a lower bound on the posterior variance. These bounds indicate that the posterior distribution of the horseshoe prior may be more informative than that of other one-component priors, including the Lasso. 
\end{abstract}

\begin{keyword}[class=MSC]
\kwd[Primary ]{62F15}
\kwd{62F10}
\end{keyword}

\begin{keyword}
\kwd{sparsity}
\kwd{horseshoe prior}
\kwd{worst case risk}
\kwd{Bayesian inference}
\kwd{empirical Bayes}
\kwd{posterior contraction}
\kwd{normal means model}
\end{keyword}



\end{frontmatter}

\section{Introduction}
We consider the normal means problem, where we observe a vector $Y \in \mathbb{R}^n$, $Y = (Y_1, \ldots, Y_n)$,  such that
\begin{equation*}
Y_i = \theta_i + \varepsilon_i, \quad i = 1, \ldots, n,
\end{equation*}
for independent normal random variables $\varepsilon_i$ with mean zero and variance $\sigma^2$. The vector  $\theta = (\theta_1, \ldots, \theta_n)$ is assumed to be sparse, in the `nearly black' sense that the number of nonzero means 
\begin{equation*}
p_n := \#\{i : \theta_i \neq 0 \} 
\end{equation*} 
is  $o(n)$ as $n \to \infty$. A natural Bayesian approach to recovering $\theta$ would be to induce sparsity through a `spike and slab' prior \citep{Mitchell1988}, which consists of a mixture of a Dirac measure at zero and a (heavy-tailed) continuous distribution. \cite{Johnstone2004} analyzed an empirical Bayes version of this approach, where the mixing weight is obtained by marginal maximum likelihood. In the frequentist set-up that the data is generated according to a fixed mean vector, they showed that the empirical Bayes coordinatewise posterior median attains the minimax rate, in $\ell_q$ norm, $q \in (0, 2]$, for mean vectors that are either nearly black or of bounded $\ell_p$ norm, $p \in (0, 2]$. \cite{Castillo2012} analyzed a fully Bayesian version, where the proportion of nonzero coefficients is modelled by a prior distribution. They identified combinations of priors on this proportion and on the nonzero coefficients (the `slab') that yield posterior distributions concentrating around the underlying mean vector at the minimax rate in $\ell_q$ norm, $q \in (0, 2]$, for mean vectors that are nearly black, and in $\ell_q$ norm, $q \in (0,2)$  for mean vectors of bounded weak $\ell_p$ norm, $p \in (0,q)$. Other work on empirical Bayes approaches to the two-group model includes \citep{Efron2008, Jiang2009, Yuan2005}. 

As a full Bayesian approach with a mixture of a Dirac and a continuous component may require exploration of a model space of size $2^n$, implementation on large datasets is currently impractical,  although \cite{Castillo2012} present an algorithm which can compute several aspects of the posterior in polynomial time, provided sufficient memory can be allocated. Several authors, including  \citep{Armagan2013, Griffin2010}, have proposed one-component priors, which model the spike at zero by a peak in the prior density at this point. For most of these proposals, theoretical justification in terms of minimax risk rates or posterior contraction rates is lacking. The Lasso estimator \citep{Tibshirani1996}, which arises as the MAP estimator after placing a Laplace prior with common parameter on each $\theta_i$, is an exception. It attains close to the minimax risk rate in $\ell_q$, $q \in [1, 2]$ (\cite{Bickel2009}). It has however been recently shown that the corresponding full posterior distribution contracts at a much slower rate than the mode \citep{Castillo2013}. This is undesirable, because this implies that the posterior distribution cannot provide an adequate measure of uncertainty in the estimate.  
 
In general one would use a posterior distribution both for recovery and for uncertainty quantification. For the first, a measure of centre, such as a median or mode, suffices. For the second, one typically employs a credible set, which is defined as a central set of prescribed posterior probability. For realistic uncertainty quantification it is necessary that the posterior contracts to its center at the same rate as the posterior median or mode approaches the true parameter.   
 
In this paper we study the posterior distribution resulting from the horseshoe prior, which is a one-component prior, introduced in \citep{Carvalho2010, Carvalho2009} and expanded upon in \citep{Scott2011, Polson2012, Polson2012-2}. It combines a pole at zero with Cauchy-like tails. The corresponding estimator does not face the computational issues  of the point mass mixture models. \cite{Carvalho2010} already showed good behaviour of the horseshoe estimator in terms of Kullback-Leibler risk when the true mean is zero. \cite{Datta2013} proved some optimality properties of a multiple testing rule induced by the horseshoe estimator.  In this paper, we prove that the horseshoe estimator achieves the minimax quadratic risk, possibly up to a multiplicative constant. We furthermore prove that the posterior variance is of the order of the minimax risk, and thus the posterior contracts at the minimax rate around the underlying mean vector. These results are proven under the assumption that the number $p_n$ of nonzero parameters is known. However, we also provide conditions under which the horseshoe estimator combined with an empirical Bayes estimator still attains the minimax rate, when $p_n$ is unknown.

This paper is organized as follows. In section \ref{sec:horseshoe}, the horseshoe prior is described and a summary of simulation results is given. The main results, that the horseshoe estimator attains the minimax squared error risk (up to a multiplicative constant) and that the posterior distribution contracts around the truth at the minimax rate, are stated in section \ref{sec:results}. Conditions on an empirical Bayes estimator of the key parameter $\tau$ such that the minimax $\ell_2$ risk  will still be obtained are given in section \ref{sec:empiricalbayes}. The behaviour of such an empirical Bayes estimate is compared to a full Bayesian version in a numerical study in section \ref{sec:simulation}. Section \ref{sec:finalremarks} contains some concluding remarks. The proofs of the main results and supporting lemmas are in the appendix.

\subsection{Notation}
We write $A_n \asymp B_n$ to denote $0 < \lim_{n \to \infty} \inf \frac{A_n}{B_n} \leq \lim_{n \to \infty} \sup \frac{A_n}{B_n} < \infty$ and $A_n \lesssim B_n$ to denote that there exists a positive constant $c$ independent of $n$ such that $A_n \leq cB_n$. $A \vee B$ is the maximum of $A$ and $B$, and $A \wedge B$ the minimum of $A$ and $B$. The standard normal density and cumulative distribution are denoted by $\phi$ and $\Phi$ and we set $\Phi^c = 1 - \Phi$. The norm $\| \cdot \|$ will be the $\ell_2$ norm and the class of nearly black vectors will be denoted by $\ell_0[p_n] := \{\theta \in \mathbb{R}^n : \#(1 \leq i \leq n : \theta_i \neq 0) \leq p_n \}$.

\section{\label{sec:horseshoe}The horseshoe prior}
In this section, we give an overview of some known properties of the horseshoe estimator which will be relevant to the remainder of our discussion. The horseshoe prior for a parameter $\theta$ modelling an observation $Y \sim \mathcal{N}(\theta, \sigma^2I_n)$ is defined hierarchically \citep{Carvalho2010}:
\begin{equation*}
\theta_i \mid \lambda_i, \tau \sim \mathcal{N}(0, \sigma^2\tau^2\lambda_i^2), \quad \lambda_i \sim C^+(0, 1),
\end{equation*} 
for $i = 1, \ldots, n$, where $C^+(0,1)$ is a standard half-Cauchy distribution. The parameter $\tau$ is assumed to be fixed in this paper, rendering the $\theta_i$ independent \textit{a priori}. The corresponding density $p_\tau$ increases logarithmically around zero, while its tails decay quadratically. The posterior density of $\theta_i$ given $\lambda_i$ and $\tau$ is normal with mean  $(1-\kappa_i)y_i$, where $\kappa_i = \frac{1}{1 + \tau^2\lambda_i^2}$. Hence, by Fubini's theorem: 
\begin{equation*}
\E[\theta_i \mid y_i, \tau] = (1 - \E[\kappa_i \mid y_i, \tau])y_i. 
\end{equation*} 
The posterior mean $\E[\theta \mid y, \tau]$ will be referred to as the horseshoe estimator and denoted by $T_\tau(y)$. The horseshoe prior takes its name from the prior on $\kappa_i$, which is given by:
\begin{equation*}
p_\tau(\kappa_i) = \frac{\tau}{\pi}\frac{1}{1 - (1-\tau^2)\kappa_i}(1-\kappa_i)^{-\frac{1}{2}}\kappa_i^{-\frac{1}{2}}.
\end{equation*}
If $\tau = 1$, this reduces to a Be$(\tfrac{1}{2}, \tfrac{1}{2})$ distribution, which looks like a horseshoe. As illustrated in Figure \ref{fig:effecttauonprior}, decreasing $\tau$ skews the prior distribution on $\kappa_i$ towards one, corresponding to more mass near zero in the prior on $\theta_i$ and a stronger shrinkage effect in $T_\tau(y)$.

\begin{figure}
\begin{center}
\subfigure
{
\label{fig:prioronkappa}
\includegraphics[scale=0.21]{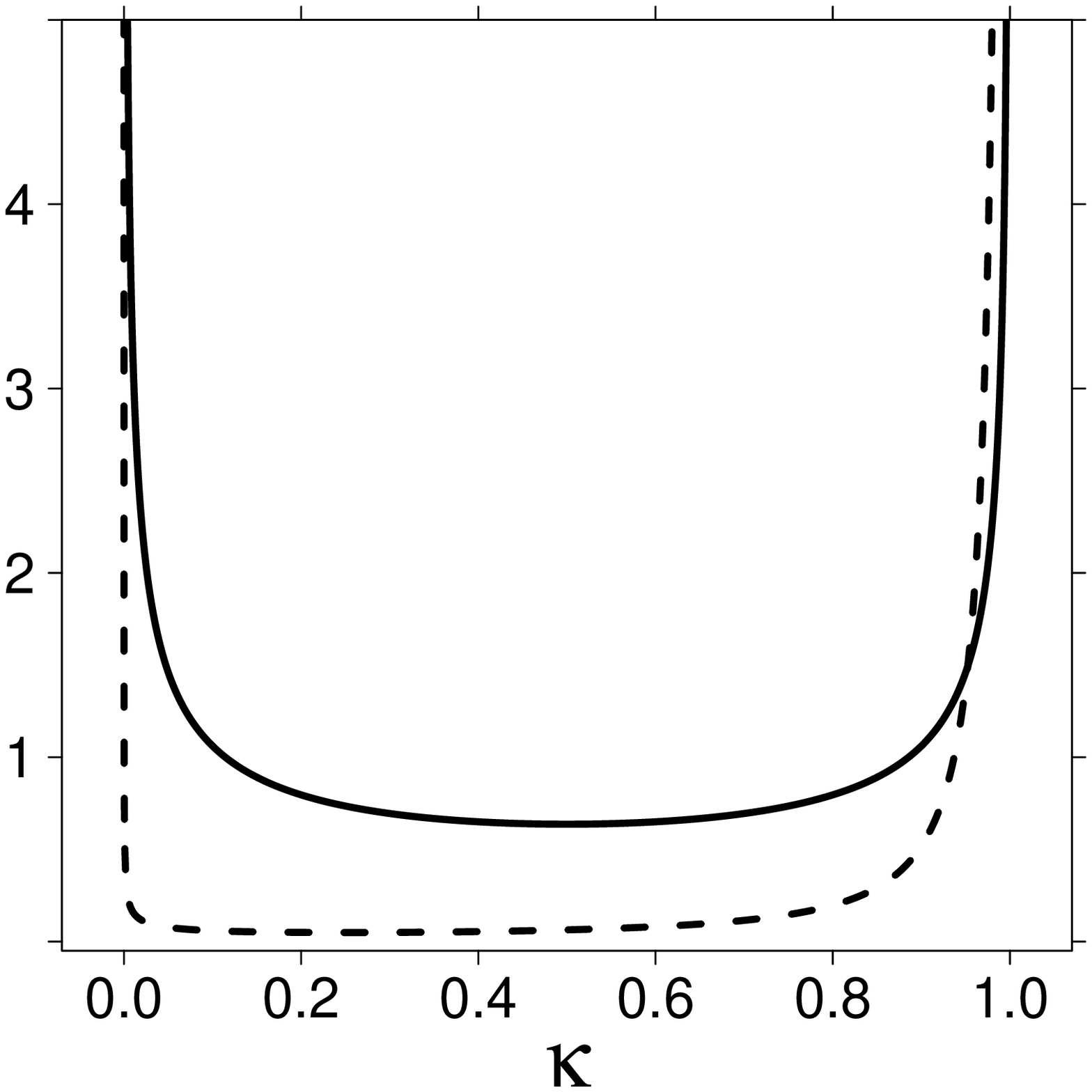}
}
\subfigure
{
\label{fig:priorontheta}
\includegraphics[scale=0.21]{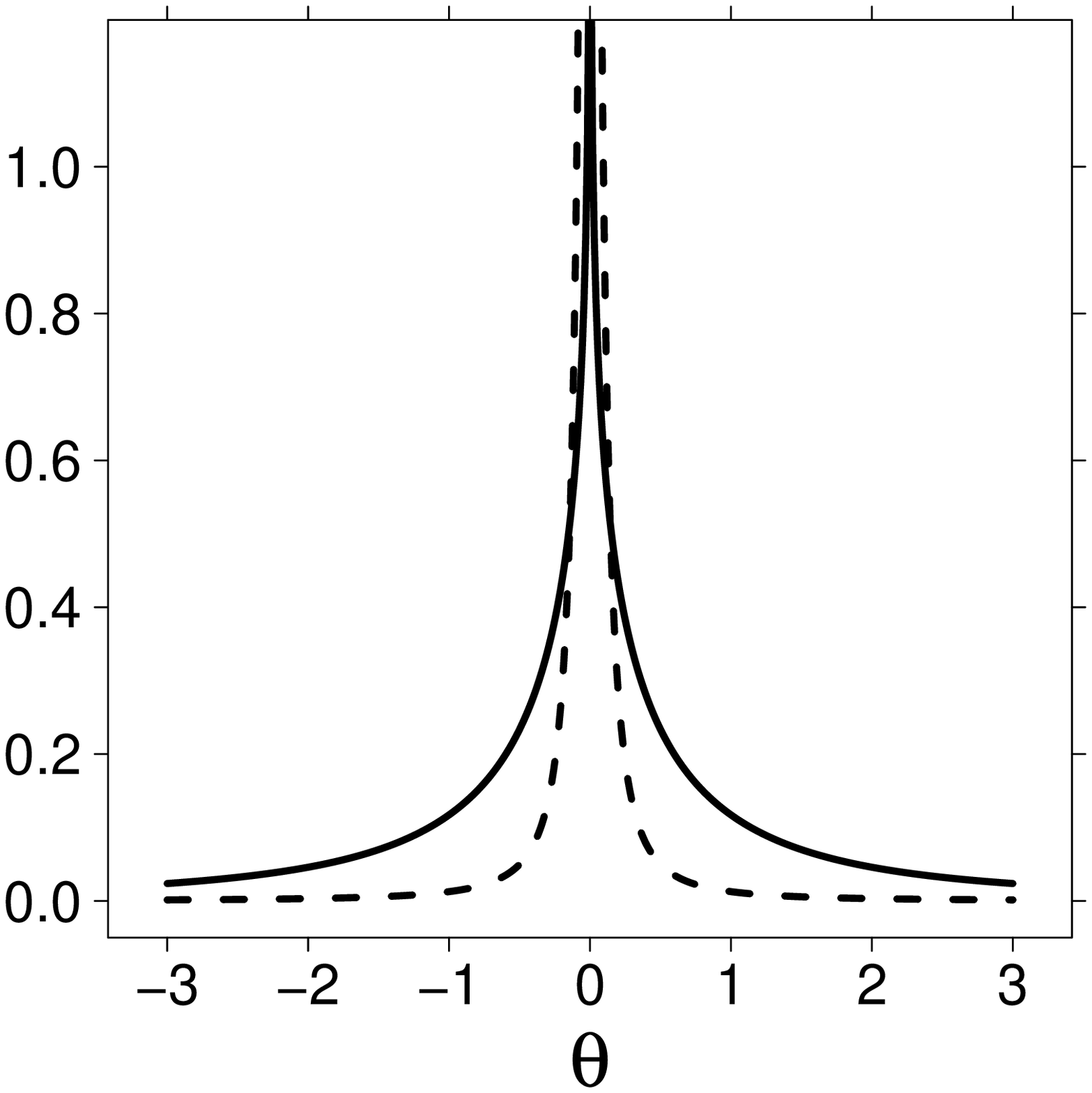}
}
\subfigure
{
\label{fig:effecttauonpm}
\includegraphics[scale=0.21]{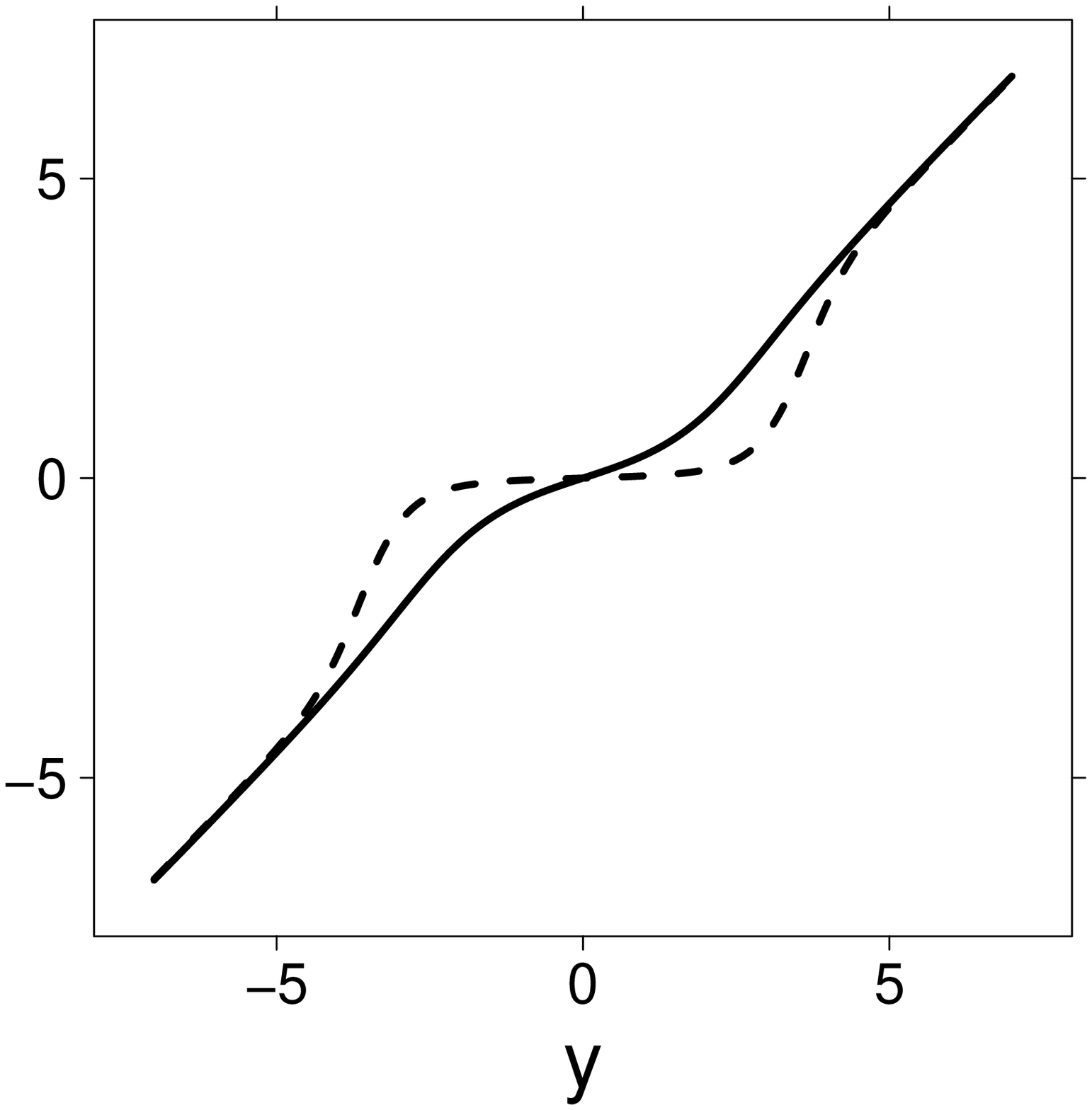}
}
\caption{The effect of decreasing $\tau$ on the priors on $\kappa$ (left) and $\theta$ (middle) and the posterior mean $T_\tau(y)$ (right). The solid line corresponds to $\tau = 1$, the dashed line to $\tau = 0.05$. Decreasing $\tau$ results in a higher prior probability of shrinking the observations towards zero. }
\label{fig:effecttauonprior}
\end{center}
\end{figure}

The posterior mean can be expressed as:
\begin{equation}\label{eq:pmexpression}
T_\tau(y_i)
= y_i\left(1 - \frac{2 \Phi_1\left(\tfrac{1}{2}, 1, \tfrac{5}{2};  \tfrac{y_i^2}{2\sigma^2}, 1 - \tfrac{1}{\tau^2} \right) }{3 \Phi_1\left(\tfrac{1}{2}, 1, \tfrac{3}{2};  \tfrac{y_i^2}{2\sigma^2}, 1 - \tfrac{1}{\tau^2} \right)  } \right)  = y_i\frac{ \int_0^1 z^{\frac{1}{2}}\frac{1}{\tau^2 + \left(1- \tau^2 \right)z }e^{\frac{y_i^2}{2\sigma^2}z} dz  } {\int_0^1 z^{-\frac{1}{2}}\frac{1}{\tau^2 + \left(1 - \tau^2 \right)z }e^{\frac{y_i^2}{2\sigma^2}z} dz },  
\end{equation}
where $\Phi_1(\alpha, \beta, \gamma; x, y)$ denotes the degenerate hypergeometric function of two variables \citep{Gradshteyn1965}. 

An unanswered question so far has been how $\tau$ should be chosen. Intuitively, $\tau$ should be small if the mean vector is very sparse, as the horseshoe prior will then place more of its mass near zero. By approximating the posterior distribution of $\tau^2$ given $\kappa = (\kappa_1, \ldots, \kappa_n)$ in case a prior on $\tau$ is used, \cite{Carvalho2010} show that if most observations are shrunk near zero, $\tau$ will be very small with high probability. They suggest a half-Cauchy prior on $\tau$. \cite{Datta2013} implemented this prior on $\tau$ and their plots of posterior draws for $\tau$ at various sparsity levels indicate the expected relationship between $\tau$ and the sparsity level: the posterior distribution of $\tau$ tends to concentrate around smaller values when the underlying mean vector is sparser. As will be discussed further in the next section, the value $\tau = \frac{p_n}{n}$ (up to a log factor) is optimal in terms of mean square error and posterior contraction rates.

In case $\tau$ is estimated empirically, as will be considered in section \ref{sec:empiricalbayes}, the horseshoe estimator can be computed by plugging this estimate into expression (\ref{eq:pmexpression}), thereby avoiding the use of MCMC. Other aspects of the posterior, such as the posterior variance, can be computed using such a plug-in procedure as well. \cite{Polson2012} and \cite{Polson2012-2} consider computation of the horseshoe estimator based on the representation in terms of degenerate hypergeometric functions, as these can be efficiently computed using  converging series of confluent hypergeometric functions. They report unproblematic computations for $\tau^2$ between $\frac{1}{1000}$ and $1000$. A second option is to apply a quadrature routine to the integral representation in (\ref{eq:pmexpression}). As the continuity and symmetry of $T_\tau(y)$ in $y$ can be taken advantage of when computing the horseshoe estimator for a large number of observations, the complexity of these computations mostly depends on the value of $\tau$. Both approaches will be slower for smaller values of $\tau$. Hence, if we use the (estimated) sparsity level $\frac{p_n}{n}$ (up to a log factor) for $\tau$, the computation of the horseshoe estimator will be slower if there are fewer nonzero parameters. As noted by \cite{Scott2010-2}, problems arise in Gibbs sampling precisely when $\tau$ is small as well. Hence care needs to be taken with any computational approach if $\frac{p_n}{n}$ is suspected to be very close to zero. 

The performance of the horseshoe prior, with additional priors on $\tau$ and $\sigma^2$, in various simulation studies has been very promising. \cite{Carvalho2010} simulated sparse data where the nonzero components were drawn from a Student-$t$ density and found that the horseshoe estimator systematically beat the MLE, the double-exponential (DE) and normal-exponential-gamma (NEG) priors, and the empirical Bayes model due to \cite{Johnstone2004} in terms of square error loss. Only when the signal was neither sparse nor heavy-tailed did the MLE, DE and NEG priors have an edge over the horseshoe estimator. In  similar experiments in \citep{Carvalho2009, Polson2012} the horseshoe prior outperformed the DE prior, while behaving similarly to a heavy-tailed discrete mixture.  In a wavelet-denoising experiment under several noise levels and loss functions, the horseshoe estimator compared favorably to the discrete wavelet transform and the empirical Bayes model \citep{Polson2010}.  \cite{Bhattacharya2012} applied several shrinkage priors to data with the underlying mean vector consisting of zeroes and fixed nonzero values and found the posterior median of the horseshoe prior performing better in terms of squared error than the Bayesian Lasso (BL), the Lasso, the posterior median of a point mass mixture prior as in \citep{Castillo2012} and the empirical Bayes model proposed by \cite{Johnstone2004}, and comparable to their proposed Dirichlet-Laplace (DL) prior with parameter $\frac{1}{n}$. Results in \citep{Armagan2013} are similar. In a second simulation setting, \cite{Bhattacharya2012} generated data of length $n = 1000$, with the first ten means equal to 10, the next 90 equal to a number $A \in \{2, \ldots, 7\}$ and the remainder equal to zero. In this simulation, the horseshoe prior beat the BL (except when $A = 2$) and the DL prior with parameter $\frac{1}{n}$ (except when $A = 7$), while performing similarly to the DL prior with parameter $\frac{1}{2}$. It is worthy of note that \cite{Koenker2014} generated data according to the same scheme and applied the empirical Bayes procedures due to \cite{Martin2013} (EBMW) and \cite{Koenker2013} (EBKM) to it. The MSE of EBMW was lower than that of the horseshoe prior for $A \in \{5, 6, 7\}$, while that of EBKM was much lower in all cases.

\section{\label{sec:results}Mean square error and bounds on the posterior variance}

In this section, we study the mean square error of the horseshoe estimator, and the spread of the posterior distribution, under the assumption that the number of nonzero parameters $p_n$ is known. Theorem \ref{thm:mse} provides an upper bound on the mean square error, and shows that for a range of choices of the global parameter $\tau$, the horseshoe estimator  attains the minimax $\ell_2$ risk, possibly up to a multiplicative constant. Theorem \ref{thm:posteriorcontraction} states upper bounds on the rate of contraction of the posterior distribution around the underlying mean vector and around the horseshoe estimator, again for a range of values of $\tau$. These upper bounds are equal, up to a multiplicative constant, to the minimax risk. The contraction rate around the truth is sharp, but this may not be the case for the rate of contraction around the horseshoe estimator. Theorems \ref{thm:varlowerbound} and \ref{thm:mismatch} provide more insight into the spread of the posterior distribution for various values of $\tau$ and indicate that $\tau = \frac{p_n}{n}\sqrt{\log(n/p_n)}$ is a good choice.\\

\begin{thm} \label{thm:mse}
Suppose $Y \sim \mathcal{N}(\theta_0, \sigma^2 I_n)$. Then the estimator $T_\tau(y)$ satisfies 
\begin{equation}\label{eq:mseupperbound}
\sup_{\theta_0 \in \ell_0[p_n]} \E_{\theta_0} \|T_\tau(Y) - \theta_0\|^2 \lesssim p_n\log\frac{1}{\tau} + (n-p_n)\tau\sqrt{\log\frac{1}{\tau}}
\end{equation}
for $\tau \to 0$, as $n, p_n \to \infty$ and $p_n = o(n)$\\
\end{thm}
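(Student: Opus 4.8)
The plan is to split the squared-error risk into contributions from the $p_n$ nonzero coordinates and the $n-p_n$ zero coordinates, and to bound each coordinatewise term separately using the integral representation (\ref{eq:pmexpression}) of $T_\tau$. Write $\E_{\theta_0}\|T_\tau(Y)-\theta_0\|^2 = \sum_{i:\theta_{0,i}\neq 0}\E_{\theta_{0,i}}(T_\tau(Y_i)-\theta_{0,i})^2 + \sum_{i:\theta_{0,i}=0}\E_0 T_\tau(Y_i)^2$, using independence of the coordinates under the fixed-$\tau$ prior. For the zero coordinates I would show $\E_0 T_\tau(Y_i)^2 \lesssim \tau\sqrt{\log(1/\tau)}$ uniformly; for the nonzero coordinates I would show $\E_{\theta_{0,i}}(T_\tau(Y_i)-\theta_{0,i})^2 \lesssim \log(1/\tau)$ uniformly over all $\theta_{0,i}\in\mathbb{R}$. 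Summing these two bounds gives exactly the right-hand side of (\ref{eq:mseupperbound}). Since the supremum over $\ell_0[p_n]$ is attained by choosing the worst configuration, and each nonzero term is bounded by a constant times $\log(1/\tau)$ regardless of the signal value, the uniformity is automatic once the two pointwise bounds are in hand.

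The key analytic input is sharp control of the shrinkage factor $1 - \E[\kappa_i\mid y_i,\tau] = T_\tau(y_i)/y_i$. From the integral form, $T_\tau(y)/y = I_{1/2}(y)/I_{-1/2}(y)$ where $I_a(y) = \int_0^1 z^{a}\,\bigl(\tau^2+(1-\tau^2)z\bigr)^{-1} e^{y^2 z/(2\sigma^2)}\,dz$. I would establish two regimes. For small $|y|$ (say $|y|\lesssim\sqrt{\log(1/\tau)}$), the exponential factor is essentially harmless and one shows $T_\tau(y)/y \lesssim \tau\bigl(1\vee \log(1/\tau)\cdot\text{something}\bigr)$-type bounds, so $T_\tau(y)^2 \lesssim \tau^2 y^2$ plus lower-order terms; integrating against the $N(0,\sigma^2)$ density of $Y_i$ under $\theta_{0,i}=0$ then yields the $\tau\sqrt{\log(1/\tau)}$ bound (the $\sqrt{\log(1/\tau)}$ coming from the Gaussian tail contribution of $|y|$ beyond the threshold, where one instead uses the trivial bound $|T_\tau(y)|\le|y|$). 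For the nonzero coordinates, the crude bound $(T_\tau(y)-\theta)^2 \le 2(T_\tau(y)-y)^2 + 2(y-\theta)^2$ reduces matters to bounding $\E_\theta(T_\tau(Y)-Y)^2 = \E_\theta\bigl(Y\,\E[\kappa\mid Y,\tau]\bigr)^2$; since $0\le \E[\kappa\mid Y,\tau]\le 1$, the integrand is at most $Y^2$, and the point is to show that the region where $\E[\kappa\mid Y,\tau]$ is not small is confined to $|Y|\lesssim\sqrt{\log(1/\tau)}$, on which $Y^2\lesssim\log(1/\tau)$, while on $|Y|\gtrsim\sqrt{\log(1/\tau)}$ one has $\E[\kappa\mid Y,\tau]\lesssim \tau^2 e^{-cY^2}/(\dots)$ decaying fast enough that the contribution is $O(1)$.

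The main obstacle will be obtaining clean, explicit constants for the ratio $I_{1/2}(y)/I_{-1/2}(y)$ across the transition at $|y|\asymp\sqrt{\log(1/\tau)}$: the denominator integral $I_{-1/2}$ has an integrable singularity at $z=0$ whose interaction with the factor $(\tau^2+(1-\tau^2)z)^{-1}$ and the exponential $e^{y^2z/(2\sigma^2)}$ must be tracked carefully to see that it behaves like a constant for small $y$ but like $e^{y^2/(2\sigma^2)}/y^2$-type growth for large $y$, while $I_{1/2}$ lacks the singularity and is correspondingly smaller by a factor $\asymp\tau$ in the sparse regime. I would handle this by splitting each integral at $z=\tau^2$ (where the two terms in $\tau^2+(1-\tau^2)z$ balance) and, for large $y$, additionally near $z=1$ via Laplace's method. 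I expect the bookkeeping — not the ideas — to be the bulk of the work, and I would isolate the needed pointwise bounds on $T_\tau(y)$ and on $\E[\kappa\mid y,\tau]$ as standalone lemmas in the appendix, then assemble the final inequality in a few lines.
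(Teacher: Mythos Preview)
Your proposal is essentially the paper's proof: the same zero/nonzero split, the same threshold $\zeta_\tau = \sqrt{2\sigma^2\log(1/\tau)}$, the same decomposition $(T_\tau(Y)-\theta)^2 \le 2(T_\tau(Y)-Y)^2 + 2(Y-\theta)^2$ for nonzero means reduced to a pointwise bound $\sup_y|T_\tau(y)-y|\lesssim \zeta_\tau$, and the same integral analysis of $I_{1/2}/I_{-1/2}$ with a split at $z=\tau^2$ and endpoint asymptotics (the paper uses Watson's lemma, your ``Laplace's method'') for large $|y|$.

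One caveat: your claim ``$T_\tau(y)^2 \lesssim \tau^2 y^2$ plus lower-order terms'' on $|y|\lesssim \zeta_\tau$ is false as stated and will fail if you try to prove it --- at $|y|=\zeta_\tau$ one has $T_\tau(y)/y$ of order a constant, not $\tau$. The bound the paper actually establishes (its Lemma~A.2, first inequality) is $|T_\tau(y)| \le \tfrac{2}{3}\tau\, |y|\, e^{y^2/(2\sigma^2)}$, and the exponential factor is \emph{not} harmless: integrating $T_\tau(y)^2 \phi(y/\sigma)$ over $[-\zeta_\tau,\zeta_\tau]$ produces $\int y^2 e^{y^2/(2\sigma^2)}\,dy \asymp \zeta_\tau/\tau$, so the bulk contributes $\tau\zeta_\tau$ just like the Gaussian tail (not something smaller). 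This does not change the final order, but your attribution of the $\sqrt{\log(1/\tau)}$ factor solely to the tail is incorrect, and you should expect the ``something'' in your hedged bound to be exactly this exponential growth.
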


By the minimax risk result in \citep{Donoho1992}, we also have a lower bound:
\begin{equation*}
\sup_{\theta_0 \in \ell_0[p_n]} \E_{\theta_0} \|T_\tau(Y)-\theta_0\|^2 \geq 2\sigma^2p_n\log\frac{n}{p_n}(1+o(1)),
\end{equation*}
as $n, p_n \to \infty$ and $p_n = o(n)$. The choice $\tau = \left(\frac{p_n}{n}\right)^\alpha$, for $\alpha \geq 1$, leads to an upper bound \eqref{eq:mseupperbound} of order $p_n\log(n/p_n)$, with (as can be seen from the proof) a multiplicative constant of at most $4\alpha\sigma^2$. Thus, for this choice of $\tau$, we have:
\begin{equation*}
\sup_{\theta_0 \in \ell_0[p_n]} \E_{\theta_0} \|T_\tau(Y)-\theta_0\|^2 \asymp p_n\log\frac{n}{p_n}.
\end{equation*}
The horseshoe estimator therefore performs well as a point estimator, as it attains the minimax risk (possibly up to a multiplicative constant of at most 2 for $\alpha = 1$). This may seem surprising, as the prior does not include a point mass at zero to account for the assumed sparsity in the underlying mean vector. Theorem \ref{thm:mse} shows that the pole at zero of the horseshoe prior mimics the point mass well enough, while the heavy tails ensure that large observations are not shrunk too much.

An upper bound on the rate of contraction of the posterior can be obtained through an upper bound on the posterior variance. The posterior variance can be expressed as:
\begin{equation*}
\var(\theta_i \mid y_i) = \frac{\sigma^2}{y_i}T_\tau(y_i) - \left(T_\tau(y_i) - y_i\right)^2 
 + y_i^2 \frac{8\Phi_1\left(\tfrac{1}{2}, 1, \tfrac{7}{2};  \tfrac{y_i^2}{2\sigma^2}, 1 - \tfrac{1}{\tau^2} \right)} {15\Phi_1\left(\tfrac{1}{2}, 1, \tfrac{3}{2};  \tfrac{y_i^2}{2\sigma^2}, 1 - \tfrac{1}{\tau^2} \right)}.
\end{equation*}
Details on the computation can be found in Lemma \ref{lem:postvar}. Using a similar approach as when bounding the $\ell_2$ risk, we can find an upper bound on the expected value of the posterior variance. \\

\begin{thm}\label{thm:var}
Suppose $Y \sim \mathcal{N}(\theta_0, \sigma^2 I_n)$. Then the variance of the posterior distribution corresponding to the horseshoe prior satisfies
\begin{equation}\label{eq:varupperbound}
\sup_{\theta_0 \in \ell_0[p_n]} \E_{\theta_0}\sum_{i=1}^{n} \var(\theta_{0i} \mid Y_{i}) \lesssim p_n\log\frac{1}{\tau} +(n-p_n)\tau\sqrt{\log\frac{1}{\tau}}
\end{equation}
for $\tau \to 0$, as $n, p_n \to \infty$ and $p_n = o(n)$.\\
\end{thm}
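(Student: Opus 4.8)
The plan is to follow the proof of Theorem \ref{thm:mse}, with the coordinatewise squared error $(T_\tau(Y_i)-\theta_{0i})^2$ replaced by the posterior variance $\var(\theta_{0i}\mid Y_i)$. The natural starting point is the decomposition coming from the hierarchical form of the prior: since $\theta_i\mid\lambda_i,y_i\sim\mathcal N\bigl((1-\kappa_i)y_i,\sigma^2(1-\kappa_i)\bigr)$ with $\kappa_i=1/(1+\tau^2\lambda_i^2)$, the law of total variance gives
\begin{equation*}
\var(\theta_i\mid y_i)=\sigma^2\,\E[1-\kappa_i\mid y_i]+y_i^2\,\var(\kappa_i\mid y_i),
\end{equation*}
which is also what the formula in Lemma \ref{lem:postvar} rearranges to once one notes $(T_\tau(y_i)-y_i)^2=y_i^2(\E[\kappa_i\mid y_i])^2$. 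With $S=\{i:\theta_{0i}\neq0\}$, $|S|\le p_n$, it then suffices to establish the two one-dimensional bounds $\E_0\,\var(\theta\mid Y)\lesssim\tau\sqrt{\log(1/\tau)}$ for $Y\sim\mathcal N(0,\sigma^2)$ and $\sup_\theta\E_\theta\,\var(\theta\mid Y)\lesssim\log(1/\tau)$, and to sum them over the $n-p_n$ noise and $p_n$ signal coordinates.

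For the term $\sigma^2\E[1-\kappa_i\mid y_i]=\sigma^2 T_\tau(y_i)/y_i$ the bound $\le\sigma^2$ is trivial, which is enough at a signal coordinate; at a noise coordinate, the integral representation underlying \eqref{eq:pmexpression} gives $\E[1-\kappa_i\mid y]\le\tfrac{\tau}{1-\tau^2}\int_0^1 z^{-1/2}e^{y^2z/(2\sigma^2)}\,dz$, so Fubini and the identity $\E_0[e^{zY^2/(2\sigma^2)}]=(1-z)^{-1/2}$ yield $\E_0\,\E[1-\kappa_i\mid Y]\le\tfrac{\pi\tau}{1-\tau^2}\lesssim\tau$. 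For the term $y_i^2\var(\kappa_i\mid y_i)$ the key inputs are: $\var(\kappa_i\mid y_i)=\var(1-\kappa_i\mid y_i)\le\E[(1-\kappa_i)^2\mid y_i]$; the analogous integral bound $\E[(1-\kappa_i)^2\mid y_i]\le\tfrac{\tau}{1-\tau^2}\int_0^1 z^{1/2}e^{y_i^2z/(2\sigma^2)}\,dz\le\tfrac{2\sigma^2\tau}{(1-\tau^2)y_i^2}e^{y_i^2/(2\sigma^2)}$, where the integration by parts produces the crucial factor $y_i^{-2}$; the elementary bound $\var(\kappa_i\mid y_i)\le\tfrac14$; and, above the threshold $|y_i|=\sigma\sqrt{2\log(1/\tau)}$, the decay of the posterior moments of $\kappa_i$, of the type used for $T_\tau$ in Theorem \ref{thm:mse}, which gives $y_i^2\var(\kappa_i\mid y_i)\lesssim\log(1/\tau)$. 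Splitting at that threshold, the signal case follows from $y_i^2\var(\kappa_i\mid y_i)\le y_i^2/4\le\tfrac12\sigma^2\log(1/\tau)$ below it and the decay estimate above it; for the noise case, above the threshold $\E_0[Y^2\var(\kappa_i\mid Y)\1_{|Y|>\sigma\sqrt{2\log(1/\tau)}}]\le\tfrac14\E_0[Y^2\1_{|Y|>\sigma\sqrt{2\log(1/\tau)}}]\asymp\tau\sqrt{\log(1/\tau)}$ by a Gaussian tail computation, and below it Fubini together with a direct Gaussian computation of the truncated integral $\E_0[Y^2e^{zY^2/(2\sigma^2)}\1_{|Y|\le\sigma\sqrt{2\log(1/\tau)}}]$ (e.g.\ rescaling $Y\mapsto Y\sqrt{1-z}/\sigma$) reduces everything to a $z$-integral of order $\sqrt{\log(1/\tau)}$. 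Adding the coordinatewise bounds gives \eqref{eq:varupperbound}.

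The main obstacle, and the reason Theorem \ref{thm:var} is not an immediate consequence of Theorem \ref{thm:mse}, is the control of $\E_0[Y_i^2\var(\kappa_i\mid Y_i)]$ at the noise coordinates. Its counterpart in the mean square error proof is $\E_0[T_\tau(Y_i)^2]=\E_0[Y_i^2(\E[1-\kappa_i\mid Y_i])^2]$, which carries two factors of the shrinkage weight $\E[1-\kappa_i\mid Y_i]$ and hence, after substituting a bound of the form $\E[1-\kappa_i\mid y]\lesssim\tau e^{y^2/(2\sigma^2)}$, two powers of $\tau$, so that even this crude substitution suffices; the posterior variance effectively retains only one such factor, and the same crude bound would produce $\tau(\log(1/\tau))^{3/2}$, which exceeds the target $\tau\sqrt{\log(1/\tau)}$ by a factor $\log(1/\tau)$ and would destroy the minimax rate when, say, $p_n\asymp\sqrt n$. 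Recovering the right power forces one to use the sharper estimate on $\E[(1-\kappa_i)^2\mid y_i]$, in which the integration by parts cancels the spurious $y_i^2$, and to combine it across the threshold $|y_i|\asymp\sqrt{\log(1/\tau)}$ with the elementary bound $\var(\kappa_i\mid y_i)\le\tfrac14$ and the Gaussian tail estimate. Verifying that these two regimes join correctly — equivalently, that the contribution of $z$ near $1$ in the Fubini step is of order $\sqrt{\log(1/\tau)}$ rather than $(\log(1/\tau))^{3/2}$ — is the delicate point; the signal coordinates, by contrast, are handled by the trivial bounds $\sigma^2\E[1-\kappa_i\mid y_i]\le\sigma^2$ and $y_i^2\var(\kappa_i\mid y_i)\le y_i^2/4$ below the threshold, plus the already available moment decay of $\kappa_i$ above it.
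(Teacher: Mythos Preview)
Your proposal is correct, and the overall architecture—split into signal and noise coordinates, threshold at $\zeta_\tau=\sqrt{2\sigma^2\log(1/\tau)}$, and invoke a Watson–lemma–type estimate above the threshold—matches the paper. The route differs in a genuine but modest way: you work with the law–of–total–variance decomposition
\[
\var(\theta_i\mid y_i)=\sigma^2\,\E[1-\kappa_i\mid y_i]+y_i^2\,\var(\kappa_i\mid y_i),
\]
whereas the paper proceeds from the two pointwise upper bounds of Lemma~\ref{lem:postvar}, namely $\var(\theta\mid y)\le\sigma^2+y^2$ and $\var(\theta\mid y)\le(\sigma^2/y+y)T_\tau(y)-T_\tau(y)^2$, and then inserts the estimates on $T_\tau$ from Lemma~\ref{lem:postmeanupperbounds}. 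Your first term is handled more cleanly than in the paper: the Fubini step with $\E_0[e^{zY^2/(2\sigma^2)}]=(1-z)^{-1/2}$ gives $\E_0\bigl[\sigma^2\E[1-\kappa\mid Y]\bigr]\le\pi\sigma^2\tau/(1-\tau^2)$ directly, improving the paper's corresponding bound by a $\sqrt{\log(1/\tau)}$ factor (harmless, but tidy). For the second term at a noise coordinate, your observation that $\E[(1-\kappa)^2\mid y]\le\tau I_{3/2}(y)\le\frac{2\sigma^2\tau}{(1-\tau^2)y^2}e^{y^2/(2\sigma^2)}$ is exactly the mechanism that in the paper emerges as the dominant term $I_3\lesssim\tau\zeta_\tau$ after plugging the relaxed bound \eqref{eq:pmboundrelaxed} into $yT_\tau(y)$; your integration-by-parts explanation makes the cancellation of $y^2$ more transparent than the paper's three-term expansion.

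One point to tighten: your claim that ``the decay of the posterior moments of $\kappa_i$, of the type used for $T_\tau$ in Theorem~\ref{thm:mse}, gives $y_i^2\var(\kappa_i\mid y_i)\lesssim\log(1/\tau)$ above the threshold'' needs a separate argument, not just a citation of Lemma~\ref{lem:diffbound}. That lemma controls $y\E[\kappa\mid y]$, but you need a uniform bound on $y^2\E[\kappa\mid y]$ (which dominates $y^2\var(\kappa\mid y)$). The paper confronts the same issue and resolves it by redoing the Watson-lemma expansion on the last term of \eqref{eq:postvar} rather than reusing Lemma~\ref{lem:diffbound} verbatim. In your framework the cleanest fix is to take the threshold at $c\zeta_\tau$ with $c>1$: for $\zeta_\tau<|y|\le c\zeta_\tau$ the trivial bound $y^2\var(\kappa\mid y)\le y^2/4\le c^2\zeta_\tau^2/4$ already suffices, and for $|y|>c\zeta_\tau$ the Watson expansion gives $y|T_\tau(y)-y|=y^2\E[\kappa\mid y]=O(1)$ uniformly. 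This is a detail of execution, not a flaw in the strategy; once inserted, your argument goes through and gives the same bound as the paper.
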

Again, the choice $\tau = \left(\frac{p_n}{n}\right)^\alpha$, for $\alpha \geq 1$ leads to an upper bound \eqref{eq:varupperbound} of the order of the minimax risk. This result indicates that the posterior contracts fast enough to be able to provide a measure of uncertainty of adequate size around the point estimate. Theorems \ref{thm:mse} and \ref{thm:var} combined allow us to find an upper bound on the rate of contraction of the full posterior distribution, both around the underlying mean vector and around the horseshoe estimator.\\

\begin{thm}\label{thm:posteriorcontraction}
Under the assumptions of Theorem \ref{thm:mse}, with $\tau = \left(\frac{p_n}{n}\right)^\alpha$, $\alpha \geq 1$:
\begin{equation}\label{eq:contractiontruth}
\sup_{\theta_0 \in \ell_0[p_n]} \E_{\theta_0} \Pi_\tau\left(\left. \theta : \|\theta - \theta_0\|^2 >  M_np_n\log\frac{n}{p_n} \ \right| \ Y \right) \to 0,
\end{equation}
and
\begin{equation}\label{eq:contractionestimator}
\sup_{\theta_0 \in \ell_0[p_n]} \E_{\theta_0} \Pi_\tau\left(\left. \theta : \|\theta - T_\tau(Y)\|^2 >  M_np_n\log\frac{n}{p_n} \ \right| \ Y \right) \to 0,
\end{equation}
for every $M_n \to \infty$ as $n \to \infty$.
\end{thm}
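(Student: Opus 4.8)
The plan is to obtain this theorem as a corollary of Theorems \ref{thm:mse} and \ref{thm:var} by two applications of Markov's inequality, using that $T_\tau(Y)$ is precisely the coordinatewise posterior mean and that the coordinates $\theta_i$ are a posteriori independent (the prior renders them independent for fixed $\tau$, and the likelihood factorizes). Write $r_n = p_n\log(n/p_n)$. First I would record the identity
\[
\E\Bigl[\|\theta - T_\tau(Y)\|^2 \ \Big|\ Y\Bigr] = \sum_{i=1}^n \E\bigl[(\theta_i - \E[\theta_i\mid Y_i])^2 \ \big|\ Y_i\bigr] = \sum_{i=1}^n \var(\theta_i \mid Y_i),
\]
so that Markov's inequality applied to the posterior measure gives, for any $M_n>0$,
\[
\Pi_\tau\Bigl(\theta : \|\theta - T_\tau(Y)\|^2 > M_n r_n \ \Big|\ Y\Bigr) \le \frac{1}{M_n r_n}\sum_{i=1}^n \var(\theta_i\mid Y_i).
\]
Taking $\E_{\theta_0}$, then the supremum over $\theta_0\in\ell_0[p_n]$, and invoking Theorem \ref{thm:var} together with the observation (made just after Theorem \ref{thm:mse}) that for $\tau=(p_n/n)^\alpha$, $\alpha\ge1$, the right-hand side of \eqref{eq:varupperbound} is $\lesssim r_n$, yields a bound of order $1/M_n\to 0$. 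This is exactly \eqref{eq:contractionestimator}.

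For \eqref{eq:contractiontruth} I would route the distance to $\theta_0$ through the estimator. From $\|\theta-\theta_0\|^2 \le 2\|\theta - T_\tau(Y)\|^2 + 2\|T_\tau(Y)-\theta_0\|^2$, the event $\{\|\theta-\theta_0\|^2 > M_n r_n\}$ is contained in $\{\|\theta - T_\tau(Y)\|^2 > \tfrac14 M_n r_n\}\cup\{\|T_\tau(Y)-\theta_0\|^2 > \tfrac14 M_n r_n\}$; the second event does not involve $\theta$, so its posterior probability equals the indicator $\1\{\|T_\tau(Y)-\theta_0\|^2 > \tfrac14 M_n r_n\}$. Hence
\[
\Pi_\tau\bigl(\theta:\|\theta-\theta_0\|^2 > M_n r_n \mid Y\bigr) \le \Pi_\tau\bigl(\theta: \|\theta - T_\tau(Y)\|^2 > \tfrac14 M_n r_n \mid Y\bigr) + \1\bigl\{\|T_\tau(Y)-\theta_0\|^2 > \tfrac14 M_n r_n\bigr\}.
\]
The first term is handled as above (with $M_n$ replaced by $M_n/4$); the expectation of the indicator equals $P_{\theta_0}\bigl(\|T_\tau(Y)-\theta_0\|^2 > \tfrac14 M_n r_n\bigr)\le \tfrac{4}{M_n r_n}\E_{\theta_0}\|T_\tau(Y)-\theta_0\|^2$, which by Theorem \ref{thm:mse} and the same reduction of \eqref{eq:mseupperbound} is $\lesssim 1/M_n\to 0$. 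All estimates are uniform in $\theta_0\in\ell_0[p_n]$, so taking suprema causes no difficulty.

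The argument is essentially bookkeeping once Theorems \ref{thm:mse} and \ref{thm:var} are available, so I do not expect a genuine obstacle; the only points needing a little care are (i) justifying the factorization of the posterior second moment, which rests on the a priori independence of the $\theta_i$ at fixed $\tau$, and (ii) verifying that for $\tau=(p_n/n)^\alpha$ with $\alpha\ge1$ both upper bounds collapse to a constant multiple of $r_n=p_n\log(n/p_n)$ --- that is, that $(n-p_n)\tau\sqrt{\log(1/\tau)}\le n(p_n/n)^\alpha\sqrt{\alpha\log(n/p_n)}\lesssim p_n\sqrt{\log(n/p_n)}$ is dominated by $p_n\log(1/\tau)\asymp p_n\log(n/p_n)$.
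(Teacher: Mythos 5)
Your proposal is correct and follows exactly the route the paper takes: the paper's proof is the one-line instruction to combine Markov's inequality with Theorems \ref{thm:mse} and \ref{thm:var}, and your argument simply spells out that combination (posterior Markov applied to $\|\theta-T_\tau(Y)\|^2$, whose conditional expectation is the total posterior variance, plus the triangle-inequality decomposition and a frequentist Markov step via Theorem \ref{thm:mse} for the distance to $\theta_0$). The two points you flag for care --- the factorization of the posterior and the reduction of both upper bounds to $p_n\log(n/p_n)$ for $\tau=(p_n/n)^\alpha$ --- are both handled correctly.
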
 

\begin{proof}
Combine Markov's inequality with the results of Theorems \ref{thm:mse} and \ref{thm:var} for (\ref{eq:contractiontruth}), and only with the result of Theorem \ref{thm:var} for (\ref{eq:contractionestimator}).
\end{proof}

 A remarkable aspect of the preceding Theorems is that many choices of $\tau$, such as $\tau = \left(\frac{p_n}{n}\right)^\alpha$ for any $\alpha \geq 1$,  lead to an upper bound of the order $p_n\log (n/p_n)$ on the worst case $\ell_2$ risk and posterior contraction rate. The upper bound on the rate of contraction in (\ref{eq:contractiontruth}) is sharp, as the posterior cannot contract faster than the minimax rate around the true mean vector \citep{Ghosal2000}. However, this is not necessarily the case for the upper bound in (\ref{eq:contractionestimator}), and for $\tau = \left(\frac{p_n}{n}\right)^\alpha$ with $\alpha > 1$, the posterior spread may be of smaller order than the rate at which the horseshoe estimator approaches the underlying mean vector. Theorems \ref{thm:varlowerbound} and \ref{thm:mismatch} provide more insight into the effect of choosing different values of $\tau$ on the posterior spread and mean square error.\\

\begin{thm}\label{thm:varlowerbound}
Suppose $Y \sim \mathcal{N}(\theta_0, \sigma^2 I_n)$, $\theta_0 \in \ell_0[p_n]$. Then the variance of the posterior distribution corresponding to the horseshoe prior satisfies 
\begin{equation}\label{eq:varlowerbound}
\inf_{\theta_0 \in \ell_0[p_n]}\E_{\theta_0}\sum_{i=1}^{n} \var(\theta_{0i} \mid Y_{i}) \gtrsim (n-p_n)\tau\sqrt{\log\frac{1}{\tau}}
\end{equation}
for $\tau \to 0$ and $p_n = o(n)$, as $n \to \infty$. This lower bound is sharp for vectors $\theta_{0,n}$ with $p_n$ entries equal to $a_n$ and the remaining entries equal to zero, if $a_n$ is such that $|a_n| \lesssim 1/\sqrt{\log(1/\tau)}$. \\
\end{thm}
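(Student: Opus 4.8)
The plan is to reduce both assertions to one-dimensional statements about $\var(\theta_i\mid y_i)$ for a single coordinate, the core input being an anti-concentration bound for the posterior of $\kappa_i$ when $y_i$ is moderately large.

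For the lower bound \eqref{eq:varlowerbound}, I would first observe that for any fixed $\theta_0\in\ell_0[p_n]$ the coordinates are independent and at least $n-p_n$ of them vanish, so, posterior variances being nonnegative,
\[
\E_{\theta_0}\sum_{i=1}^n\var(\theta_{0i}\mid Y_i)\ \geq\ (n-p_n)\,\E_0\var(\theta_1\mid Y_1),\qquad Y_1\sim\mathcal N(0,\sigma^2),
\]
and the infimum over $\theta_0$ preserves this. It then suffices to show $\E_0\var(\theta_1\mid Y_1)\gtrsim\tau\sqrt{\log(1/\tau)}$, which I would get by integrating a pointwise bound. By the law of total variance (since $\theta_i\mid y_i,\lambda_i\sim\mathcal N((1-\kappa_i)y_i,\sigma^2(1-\kappa_i))$) one has $\var(\theta_i\mid y_i)=\sigma^2\E[1-\kappa_i\mid y_i]+y_i^2\var(\kappa_i\mid y_i)\geq y_i^2\var(\kappa_i\mid y_i)$, so everything comes down to bounding $\var(\kappa_i\mid y_i)$ below. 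The mechanism I would exploit is that $p_\tau(\kappa_i\mid y_i)\propto(1-\kappa_i)^{-1/2}\{\tau^2+(1-\tau^2)(1-\kappa_i)\}^{-1}e^{-\kappa_i y_i^2/(2\sigma^2)}$ is genuinely spread out for moderate $y_i$: the factors that blow up as $\kappa_i\to1$ keep a fraction of the mass (in fact tending to $1$) near $\kappa_i=1$, while $e^{-\kappa_i y_i^2/(2\sigma^2)}$ places a small but non-negligible fraction $q(y_i):=\Pi_\tau(\kappa_i\leq\tfrac14\mid y_i)$ near $\kappa_i=0$. As then $\E[\kappa_i\mid y_i]$ is close to $1$, an elementary bound gives $\var(\kappa_i\mid y_i)\geq\tfrac1{16}q(y_i)$, and a two-sided estimate of the normalising constant should yield $q(y)\gtrsim(\sigma^2\tau/y^2)e^{y^2/(2\sigma^2)}$ for $y$ in a window such as $6\leq y^2/\sigma^2\leq\log(1/\tau)$. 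Hence $\var(\theta_i\mid y_i)\gtrsim\sigma^2\tau\,e^{y^2/(2\sigma^2)}$ there, and
\[
\E_0\var(\theta_1\mid Y_1)\ \geq\ \int_{\{6\leq y^2/\sigma^2\leq\log(1/\tau)\}}\sigma^2\tau\,e^{y^2/(2\sigma^2)}\,\frac{e^{-y^2/(2\sigma^2)}}{\sqrt{2\pi}\,\sigma}\,dy\ \asymp\ \sigma^2\tau\sqrt{\log(1/\tau)},
\]
the exponential in the posterior variance exactly cancelling the Gaussian density, so the integrand is constant over an interval of length of order $\sigma\sqrt{\log(1/\tau)}$. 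I expect the main obstacle to be making the lower bound on $\var(\kappa_i\mid y_i)$ rigorous and uniform over the window: this needs two-sided control of both tails of the posterior of $\kappa_i$, in particular a matching upper bound on the normalising constant (encoding that for moderate $y_i$ there is substantially less posterior mass near $\kappa_i=1$ than at $y_i=0$; the crude bound $Z\lesssim1/\tau$ is not enough), together with the check that the posterior mean of $\kappa_i$ does not absorb the variance, i.e.\ that $\E[(1-\kappa_i)^2\mid y_i]$ is of the same order as $\E[1-\kappa_i\mid y_i]$ with the right constants.

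For the sharpness claim, with $\theta_{0,n}$ having $p_n$ coordinates equal to $a_n$ and the rest zero, I would split
\[
\E_{\theta_{0,n}}\sum_{i=1}^n\var(\theta_{0i}\mid Y_i)=(n-p_n)\,\E_0\var(\theta_1\mid Y_1)+p_n\,\E_{a_n}\var(\theta_1\mid Y_1),
\]
where $\E_{a_n}$ refers to $Y_1\sim\mathcal N(a_n,\sigma^2)$. The first term is $\lesssim(n-p_n)\tau\sqrt{\log(1/\tau)}$ by the bound on the zero-coordinate contribution already present in the proof of Theorem~\ref{thm:var}, so, together with the lower bound, it only remains to show the second term is of strictly smaller order (note this genuinely needs $a_n$ small: for $\tau=(p_n/n)^\alpha$ with $\alpha>1$ the generic bound $\E_{a_n}\var\lesssim\log(1/\tau)$ of Theorem~\ref{thm:var} is too weak). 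I would fix a cut-off $\rho_\tau=\sigma\sqrt{2\log(1/\tau)}$, essentially the level at which the posterior of $\kappa_i$ switches from concentrating near $1$ to concentrating near $0$. For $|y|\leq\rho_\tau$, use the pointwise upper bound $\var(\theta_i\mid y_i)\lesssim\sigma^2\tau\,e^{y^2/(2\sigma^2)}$ from the proof of Theorem~\ref{thm:var} together with the exact computation
\[
\E_{a_n}\!\left[e^{Y^2/(2\sigma^2)}\mathbf 1\{|Y|\leq\rho_\tau\}\right]=\frac{e^{-a_n^2/(2\sigma^2)}}{\sqrt{2\pi}\,\sigma}\int_{-\rho_\tau}^{\rho_\tau}e^{a_n y/\sigma^2}\,dy\ \lesssim\ \rho_\tau,
\]
whose last step uses exactly that $|a_n|\rho_\tau/\sigma^2=O(1)$, i.e.\ the hypothesis $|a_n|\lesssim1/\sqrt{\log(1/\tau)}$; this piece contributes $\lesssim\tau\sqrt{\log(1/\tau)}$. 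For $|y|>\rho_\tau$, use the crude bound $\var(\theta_i\mid y_i)\leq\sigma^2+y_i^2$ together with the Gaussian tail estimates $P_{a_n}(|Y|>\rho_\tau)\asymp(\sigma/\rho_\tau)e^{-\rho_\tau^2/(2\sigma^2)}$ and $\E_{a_n}[Y^2\mathbf 1\{|Y|>\rho_\tau\}]\asymp\sigma\rho_\tau\,e^{-\rho_\tau^2/(2\sigma^2)}$ (valid under the shifted mean because $|a_n|\ll\rho_\tau$ keeps the exponential order unchanged), which are $\asymp\tau\sigma^2/\sqrt{\log(1/\tau)}$ and $\asymp\tau\sigma^2\sqrt{\log(1/\tau)}$ respectively. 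Thus $\E_{a_n}\var(\theta_1\mid Y_1)\lesssim\tau\sqrt{\log(1/\tau)}$, and since $p_n=o(n)$ the second term is $o\!\big((n-p_n)\tau\sqrt{\log(1/\tau)}\big)$, so the total is $\asymp(n-p_n)\tau\sqrt{\log(1/\tau)}$, which is the claimed sharpness. The delicate point is the choice of $\rho_\tau$: taking it much smaller makes the tail contribution too large, while taking it larger destroys the bound $\var(\theta_i\mid y_i)\lesssim\sigma^2\tau e^{y^2/(2\sigma^2)}$.
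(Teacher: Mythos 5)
Your proposal is correct and follows essentially the same route as the paper: your decomposition $\var(\theta_i\mid y_i)=\sigma^2\E[1-\kappa_i\mid y_i]+y_i^2\var(\kappa_i\mid y_i)$ is exactly the paper's identity $\var(\theta\mid y)=\sigma^2 I_{1/2}/I_{-1/2}+y^2\bigl(I_{3/2}/I_{-1/2}-(I_{1/2}/I_{-1/2})^2\bigr)$ in probabilistic clothing, and both arguments rest on the same two pillars, namely a pointwise bound $\var(\theta\mid y)\asymp\sigma^2\tau e^{y^2/(2\sigma^2)}$ on a window $|y|\asymp\sqrt{\log(1/\tau)}$ whose exponential cancels the Gaussian density (lower bound), and a bulk/tail split at $\zeta_\tau$ in which the shift contributes only the factor $e^{-a_n^2/(2\sigma^2)}e^{a_n\zeta_\tau/\sigma^2}=O(1)$ under $|a_n|\lesssim1/\sqrt{\log(1/\tau)}$ (sharpness). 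The anti-concentration estimate you need for $\kappa_i$ is supplied by the two-sided bounds of Lemma \ref{lem:integralbounds} (in particular $I_{-1/2}(y)\gtrsim 1/\tau$ on the window), and your claimed pointwise upper bound in the bulk does follow from Lemma \ref{lem:postmeanupperbounds} via \eqref{eq:pmboundrelaxed}, since the $1/y^2$ in front of the dominant exponential there absorbs the prefactor $y^2$.
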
 

\begin{thm}\label{thm:mismatch}
Suppose $Y \sim \mathcal{N}(\theta_{0,n}, \sigma^2 I_n)$ and $\theta_{0,n} \in \ell_0[p_n]$ is such that $p_n$ entries are equal to $\gamma\sqrt{2\sigma^2\log(1/\tau)}$, $\gamma \in (0,1)$, and all remaining entries are equal to zero. Then:
\begin{equation}\label{eq:mismatchbias}
\E_{\theta_{0,n}} \|T_\tau(Y) - \theta_{0,n} \|^2 \asymp p_n\log\frac{1}{\tau} + (n-p_n)\tau\sqrt{\log\frac{1}{\tau}} ,
\end{equation}
and
\begin{equation}\label{eq:mismatchvar}
\E_{\theta_{0,n}} \sum_{i=1}^n \var(\theta_{0,ni} \mid Y_i) \asymp p_n\tau^{(1-\gamma)^2}\left(\log \frac{1}{\tau}\right)^{\gamma - \tfrac{1}{2}} + (n-p_n)\tau\sqrt{\log\frac{1}{\tau}} ,
\end{equation}
for $\tau \to 0$ and $p_n = o(n)$, as $n \to \infty$. \\
\end{thm}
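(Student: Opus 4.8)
The plan is to exploit independence of the coordinates together with the fact that $\theta_{0,n}$ takes only the two values $\mu_n:=\gamma\sqrt{2\sigma^2\log(1/\tau)}$ (on $p_n$ coordinates) and $0$ (on $n-p_n$ coordinates): consequently both displayed expectations split into $p_n$ copies of a signal-coordinate term plus $n-p_n$ copies of a zero-coordinate term, and—all summands being nonnegative—it suffices to establish the four one-dimensional statements
\[
\E_0 T_\tau(Y_1)^2\asymp\tau\sqrt{\log\tfrac1\tau},\qquad
\E_0\var(\theta\mid Y_1)\asymp\tau\sqrt{\log\tfrac1\tau},
\]
\[
\E_{\mu_n}\bigl(T_\tau(Y_1)-\mu_n\bigr)^2\asymp\log\tfrac1\tau,\qquad
\E_{\mu_n}\var(\theta\mid Y_1)\asymp\tau^{(1-\gamma)^2}\bigl(\log\tfrac1\tau\bigr)^{\gamma-\frac12},
\]
where $\E_0$ and $\E_{\mu_n}$ denote expectation for $Y_1\sim\mathcal N(0,\sigma^2)$ and $Y_1\sim\mathcal N(\mu_n,\sigma^2)$. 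The two zero-coordinate estimates are exactly the per-coordinate content of the bounds already used to prove Theorems~\ref{thm:mse} and~\ref{thm:var} (for the upper halves) and Theorem~\ref{thm:varlowerbound} (for the lower halves), so I would simply extract them from those arguments; and since $\theta_{0,n}\in\ell_0[p_n]$, the ``$\lesssim$'' half of \eqref{eq:mismatchbias} is immediate from Theorem~\ref{thm:mse}. What remains, and is genuinely new, is the lower bound in \eqref{eq:mismatchbias} on the signal coordinates and the full two-sided bound \eqref{eq:mismatchvar} on the signal-coordinate posterior variance (whose ``$\lesssim$'' half is sharper than, hence not implied by, Theorem~\ref{thm:var}, because $\tau^{(1-\gamma)^2}(\log\tfrac1\tau)^{\gamma-1/2}\ll\log\tfrac1\tau$ when $\gamma<1$).

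For the signal-coordinate risk, write $Y_1=\mu_n+\varepsilon$ with $\varepsilon\sim\mathcal N(0,\sigma^2)$ and use the monotone-shrinkage property $0\le\operatorname{sign}(y)\,T_\tau(y)\le|y|$. On $\{Y_1>0\}$ this gives $(T_\tau(Y_1)-\mu_n)^2\le(\mu_n^2\vee\varepsilon^2)\le\mu_n^2+\varepsilon^2$, while $\{Y_1\le0\}$ contributes negligibly, so $\E_{\mu_n}(T_\tau(Y_1)-\mu_n)^2\lesssim\mu_n^2+\sigma^2\asymp\log\tfrac1\tau$. For the matching lower bound, observe that $\gamma<1$ places $\mu_n$ strictly below the effective shrinkage threshold $t_\tau:=\sqrt{2\sigma^2\log(1/\tau)}$, so for any fixed $c\in(\gamma,1)$ the event $\{|Y_1|\le c\,t_\tau\}$ has probability tending to one; on this event the bounds developed for Theorem~\ref{thm:mse} give $|T_\tau(Y_1)|=|Y_1|\,(1-\E[\kappa\mid Y_1])\le c\,t_\tau\,(1-\E[\kappa\mid c\,t_\tau])=o(\mu_n)$, whence $(T_\tau(Y_1)-\mu_n)^2\ge\tfrac12\mu_n^2$ there, and $\E_{\mu_n}(T_\tau(Y_1)-\mu_n)^2\gtrsim\mu_n^2\asymp\log\tfrac1\tau$.

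The crux is the signal-coordinate posterior variance. Starting from $\var(\theta\mid y)=\sigma^2\bigl(1-\E[\kappa\mid y]\bigr)+y^2\var(\kappa\mid y)$—which follows from $\theta\mid\lambda,y\sim\mathcal N\bigl((1-\kappa)y,\sigma^2(1-\kappa)\bigr)$ by the law of total variance, and is the quantity computed in Lemma~\ref{lem:postvar}—the key qualitative picture is: $\var(\theta\mid y)$ is of order $\sigma^2\tau\,e^{y^2/(2\sigma^2)}$ while $|y|\lesssim t_\tau$, rises to a maximum of order $\sigma^2\log(1/\tau)$ inside an $O(1)$-wide window in the variable $u=y^2/(2\sigma^2)$ around $u\approx\log(1/\tau)$—precisely where the posterior law of $\kappa$ carries order-one mass near both $0$ and $1$—and then decays back to order $\sigma^2$ for $|y|$ well above $t_\tau$. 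Feeding this into $\E_{\mu_n}\var(\theta\mid Y_1)=\int\var(\theta\mid y)\,\phi_\sigma(y-\mu_n)\,dy$ (with $\phi_\sigma$ the $\mathcal N(0,\sigma^2)$ density) and passing to $u$, the exponent $\tfrac{y^2}{2\sigma^2}-\tfrac{(y-\mu_n)^2}{2\sigma^2}$ is increasing in $y$, so the resulting integrand increases up to the transition window and, beyond it, decays exponentially at rate $\asymp\gamma$ on the low side and $\asymp2-\gamma$ on the high side; inserting $\mu_n=\gamma t_\tau$ and the Jacobian of $u\mapsto y$, its value at the transition point is of order $\sigma^2\tau^{(1-\gamma)^2}(\log\tfrac1\tau)^{\gamma-1/2}$, and multiplying by the $O(1)$ width yields precisely the claimed rate. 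The contributions of $u$ far from the window, and of $Y_1$ lying well above $t_\tau$ (probability $\asymp\tau^{(1-\gamma)^2}$ but variance only $\asymp\sigma^2$), are of strictly smaller order, so they do not disturb the two-sided bound; \eqref{eq:mismatchbias} and \eqref{eq:mismatchvar} then follow by recombining with the zero-coordinate estimates.

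The main obstacle is making rigorous the claimed shape of $\var(\theta\mid y)$—that it grows like $\tau\,e^{y^2/(2\sigma^2)}$ below threshold, peaks at order $\sigma^2\log(1/\tau)$, and has a crossover window of width $O(1)$ in $u$. This rests on Laplace-type asymptotics, with explicit control of constants and of the crossover width, for the integrals $\int_0^1 z^{j/2}\bigl(\tau^2+(1-\tau^2)z\bigr)^{-1}e^{uz}\,dz$ with $j\in\{-1,1,3\}$, valid uniformly as $\tau\to0$ over $u$ ranging up to and somewhat past $\log(1/\tau)$; this is the same analytic engine that drives Theorems~\ref{thm:mse}--\ref{thm:var}, but here it must be run in both directions and sharpened to the level of a power of $\log(1/\tau)$, and then combined with a careful Laplace analysis of the one-dimensional integral against the shifted Gaussian density $\phi_\sigma(\cdot-\mu_n)$.
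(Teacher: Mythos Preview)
Your proposal is correct and follows essentially the same route as the paper: split into per-coordinate estimates, quote the existing theorems where possible, and for the new signal-coordinate variance estimate localize the integral at the transition window $u\approx\log(1/\tau)$ via two-sided asymptotics for the integrals $I_k(y)=\int_0^1 z^k(\tau^2+(1-\tau^2)z)^{-1}e^{uz}\,dz$. The paper carries this out through Lemma~\ref{lem:mismatch2}, which gives $I_k(y)=(\tau^{2k}c_k+\tfrac{2\sigma^2}{y^2}e^{y^2/(2\sigma^2)})(1+o(1))$ uniformly on $[\zt/4,4\zt]$, and then the substitution $u=\zt y-2\sigma^2\log\zt$ (in the shifted variable) which pins the crossover at $u=0$ and delivers the exact constants $c_1(\gamma),c_2(\gamma)$ via dominated convergence; your Laplace picture is the same analysis described heuristically.

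Two minor points to watch. First, the zero-coordinate lower bound $\E_0 T_\tau(Y)^2\gtrsim\tau\zt$ is \emph{not} contained in Theorems~\ref{thm:mse}--\ref{thm:varlowerbound}; the paper supplies it inside the proof of Theorem~\ref{thm:mismatch} by showing $T_\tau(y)\gtrsim y$ for $|y|>\zt$ (from the bounds of Lemma~\ref{lem:integralbounds}) and integrating over that tail. Second, your signal-MSE lower bound invokes monotonicity of $1-\E[\kappa\mid y]$ in $|y|$, which is true (it is the mean of an exponential family in $y^2$) but not stated in the paper; you can bypass it by using Lemma~\ref{lem:postmeanupperbounds} bound~1 directly, which already gives $|T_\tau(y)|\le\tfrac23\tau|y|e^{y^2/(2\sigma^2)}=O(\zt\tau^{1-c^2})=o(\mu_n)$ on $\{|Y_1|\le c\zt\}$. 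The paper instead computes $\E_{\gamma\zt}T_\tau(Y)$ precisely and applies Jensen; your high-probability argument is a legitimate and slightly shorter alternative.
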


Consider $\tau = \left(\frac{p_n}{n}\right)^\alpha$. Three cases can be discerned:
\begin{enumerate}[(i)]
\item $0 < \alpha < 1$. Lower bound \eqref{eq:varlowerbound} may exceed the minimax rate, implying suboptimal spread of the posterior distribution in the squared $\ell_2$ sense.
\item $\alpha = 1$. Bounds \eqref{eq:varupperbound} and \eqref{eq:varlowerbound} differ by a factor $\sqrt{\log(n/p_n)}$, as do \eqref{eq:mismatchbias} and \eqref{eq:mismatchvar}. The gap can be closed by choosing $\tau = \frac{p_n}{n}\sqrt{\log\frac{n}{p_n}}$.
\item $\alpha > 1$. Bound \eqref{eq:varlowerbound} is not very informative, but Theorem \ref{thm:mismatch} exhibits a sequence $\theta_{0,n} \in \ell_0[p_n]$ for which there is a mismatch between the order of the mean square error and the posterior variance. Bounds \eqref{eq:mismatchbias} and \eqref{eq:mismatchvar} are of the orders $p_n(\log(1/\tau) + \tau^{1-1/\alpha}\sqrt{\log(1/\tau)})$ and $p_n( \tau^{(1-\gamma)^2}(\log(1/\tau))^{\gamma - 1/2} + \tau^{1-1/\alpha}\sqrt{\log(1/\tau)} )$, respectively. Hence up to logarithmic factors the total posterior variance \eqref{eq:mismatchvar} is a factor $\tau^{(1-1/\alpha) \wedge (1-\gamma)^2}$ smaller than the square distance of the center of the posterior to the truth \eqref{eq:mismatchbias}. For $p_n \leq n^c$ for some $c>0$, this factor behaves as a power of $n$. 
\end{enumerate}

These observations suggest that $\tau = \frac{p_n}{n}\sqrt{\log (n/p_n)}$ is a good choice, because then \eqref{eq:mseupperbound}, \eqref{eq:varupperbound}, \eqref{eq:varlowerbound}, \eqref{eq:mismatchbias}, \eqref{eq:mismatchvar} are all of the order $p_n\log(n/p_n)$, suggesting that the posterior contracts at the minimax rate around both the truth and the horseshoe estimator.

\section{\label{sec:empiricalbayes}Empirical Bayes estimation of $\tau$}
A natural follow-up question is how to choose $\tau$ in practice, when $p_n$ is unknown. As  discussed in section \ref{sec:horseshoe}, the full Bayesian approach suggested by \cite{Carvalho2010} performs well in simulations. The analysis of such a  hierarchical prior would however require different tools than the ones we have used so far. An empirical Bayes estimate of $\tau$ would be a natural solution, and allows us in practice to use one of the representations in (\ref{eq:pmexpression}) for computations, instead of an MCMC-type algorithm.

By adapting the approach in paragraph 6.2 in \citep{Johnstone2004}, we can find conditions under which the horseshoe estimator with an empirical Bayes estimate of $\tau$ will still  attain the minimax $\ell_2$ risk. Based on the consideration of Section \ref{sec:results}, we proceed with the choices $\tau = \frac{p_n}{n}\sqrt{\log(n/p_n)}$ and $\tau = \frac{p_n}{n}$. The former is optimal in the sense that the posterior spread is of the order of the minimax risk, but the latter has the simple interpretation of being the proportion of nonzero means, and the difference between the two is only the square root of a log factor.\\

\begin{thm}\label{thm:empiricalbayes}
Suppose we observe an $n$-dimensional vector $Y \sim \mathcal{N}(\theta_0, \sigma^2I_n)$ and we use $T_{\widehat \tau}(y)$ as our estimator of $\theta_0$. If $\widehat \tau \in (0, 1)$ satisfies the following two conditions for $\tau = \frac{p_n}{n}$ or $\tau = \frac{p_n}{n}\sqrt{\log(n/p_n)}$:
\begin{enumerate}
\item $\mathbb{P}_{\theta_0}\left(\widehat\tau > c\tau\right)  \lesssim \frac{p_n}{n}$ for a constant $c \geq 1$ such that $\tau \leq \frac{1}{c}$;
\item There exists a function $g: \mathbb{N} \times \mathbb{N} \to (0, 1)$ such that $\widehat\tau \geq g(n, p_n)$ with probability one and $-\log (g(n, p_n)) \mathbb{P}_{\theta_0}\left(\widehat\tau \leq\tau \right) \lesssim \log(n/p_n) $,
\end{enumerate}
then:
\begin{equation}\label{eq:empbayesresult}
\sup_{\theta_0 \in \ell_0[p_n]} \E_{\theta_0} \|T_{\widehat \tau}(Y) - \theta_0\|^2 \asymp p_n\log \frac{n}{p_n}
\end{equation}
 as $n, p_n \to \infty$ and $p_n = o(n)$.  If only the first condition can be verified for an estimator $\widehat\tau$, then $\sup\{\frac{1}{n}, \widehat\tau\}$ will have an $\ell_2$ risk of at most order $p_n \log n$.
\end{thm}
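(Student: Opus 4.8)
\emph{Proof strategy.} I would adapt the scheme of Section~6.2 of \citep{Johnstone2004}. Write $\tau$ for the target value ($p_n/n$ or $(p_n/n)\sqrt{\log(n/p_n)}$), let $c\ge 1$ be the constant of condition~1, put $g=g(n,p_n)$, and split the sample space into the three events $\{\widehat\tau\le\tau\}$, $\{\tau<\widehat\tau\le c\tau\}$ and $\{\widehat\tau>c\tau\}$. On each I would bound the corresponding part of $\E_{\theta_0}\|T_{\widehat\tau}(Y)-\theta_0\|^2$ by a constant multiple of $p_n\log(n/p_n)$, which together with the minimax lower bound of \citep{Donoho1992} gives \eqref{eq:empbayesresult}. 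Two ingredients carry the argument. The first is the monotonicity of $\tau\mapsto|T_\tau(y)|$ (more shrinkage for smaller $\tau$), which I would read off \eqref{eq:pmexpression} by checking that the probability measures on $[0,1]$ proportional to $z^{-1/2}(\tau^2+(1-\tau^2)z)^{-1}e^{y^2z/2\sigma^2}\,dz$ are increasing in $\tau$ in the likelihood-ratio order; together with the convexity of $t\mapsto(t-\theta_{0i})^2$ this gives, whenever $\widehat\tau$ is confined to an interval $[\rho_1,\rho_2]$,
\[
(T_{\widehat\tau}(Y_i)-\theta_{0i})^2\le (T_{\rho_1}(Y_i)-\theta_{0i})^2+(T_{\rho_2}(Y_i)-\theta_{0i})^2,
\]
so the random-$\tau$ loss is dominated by deterministic-$\tau$ losses at the endpoints. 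The second ingredient is the pair of pointwise bounds $|T_\tau(y)|\le|y|$ and $\sup_y|y-T_\tau(y)|\lesssim\sqrt{1\vee\log(1/\tau)}$, which are part of the argument for Theorem~\ref{thm:mse}; combined with $Y_i-\theta_{0i}=\varepsilon_i$ they yield, for any deterministic $\rho\in(0,1]$, the almost sure bounds $(T_\rho(Y_i)-\theta_{0i})^2\lesssim(1\vee\log\tfrac1\rho)+\varepsilon_i^2$ and, when $\theta_{0i}=0$, $T_\rho(Y_i)^2\le\varepsilon_i^2$.

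\emph{The two upper events.} On $\{\tau<\widehat\tau\le c\tau\}$ the displayed sandwich with endpoints $\tau,c\tau$ reduces matters to $\E_{\theta_0}\|T_\tau(Y)-\theta_0\|^2+\E_{\theta_0}\|T_{c\tau}(Y)-\theta_0\|^2$, and since $\log\tfrac1{c\tau}\asymp\log\tfrac1\tau$ and $c\tau\sqrt{\log\tfrac1{c\tau}}\asymp\tau\sqrt{\log\tfrac1\tau}$, Theorem~\ref{thm:mse} bounds both by $\lesssim p_n\log(n/p_n)$. On $\{\widehat\tau>c\tau\}$ --- the ``too little shrinkage'' event --- I would use $\widehat\tau\in(c\tau,1]$: for the $n-p_n$ zero coordinates $T_{\widehat\tau}(Y_i)^2\le T_1(Y_i)^2\le\varepsilon_i^2$, and for the $p_n$ nonzero ones the sandwich with endpoints $c\tau,1$ and the almost sure bound above give $(T_{\widehat\tau}(Y_i)-\theta_{0i})^2\lesssim\log\tfrac1\tau+\varepsilon_i^2$, whence $\|T_{\widehat\tau}(Y)-\theta_0\|^2\1_{\widehat\tau>c\tau}\lesssim(p_n\log\tfrac1\tau+\|\varepsilon\|^2)\1_{\widehat\tau>c\tau}$. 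Taking expectations, condition~1 makes $p_n\log\tfrac1\tau\,\mathbb{P}_{\theta_0}(\widehat\tau>c\tau)\lesssim p_n$, and $\E_{\theta_0}[\|\varepsilon\|^2\1_{\widehat\tau>c\tau}]$ I would control by splitting $\|\varepsilon\|^2$ at $Kn\sigma^2$: the bulk contributes $Kn\sigma^2\,\mathbb{P}_{\theta_0}(\widehat\tau>c\tau)\lesssim p_n$ and the excess $\E_{\theta_0}(\|\varepsilon\|^2-Kn\sigma^2)_+=o(1)$ by a $\chi^2_n$ upper-tail bound.

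\emph{The over-shrinkage event --- the main obstacle.} The delicate piece is $\{\widehat\tau\le\tau\}$: with a too-small global parameter the horseshoe can shrink genuine signal essentially to zero, so $\|T_{\widehat\tau}(Y)-\theta_0\|^2$ has no bound uniform over $\ell_0[p_n]$ and the crude estimates above break down. I would split again into zero and nonzero coordinates. For the zero coordinates, monotonicity with $\widehat\tau\le\tau$ gives $T_{\widehat\tau}(Y_i)^2\le T_\tau(Y_i)^2$, so their total contribution is at most the \emph{deterministic} $\|T_\tau(Y)-\theta_0\|^2$, of expectation $\lesssim p_n\log(n/p_n)$ by Theorem~\ref{thm:mse}. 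For the (at most $p_n$) nonzero coordinates, $g\le\widehat\tau\le\tau$ together with the sandwich on $[g,\tau]$ and the almost sure bound give $(T_{\widehat\tau}(Y_i)-\theta_{0i})^2\lesssim\log(1/g)+\varepsilon_i^2$, so their contribution on $\{\widehat\tau\le\tau\}$ has expectation $\lesssim p_n\log(1/g)\,\mathbb{P}_{\theta_0}(\widehat\tau\le\tau)+p_n\sigma^2$; condition~2 is precisely what forces the first term to be $\lesssim p_n\log(n/p_n)$. Summing the three events yields \eqref{eq:empbayesresult}. For the last assertion, $\widehat\tau\vee n^{-1}$ satisfies the almost sure lower bound of condition~2 with $g=n^{-1}$, and $\{\widehat\tau\vee n^{-1}>c\tau\}=\{\widehat\tau>c\tau\}$ since $c\tau\ge n^{-1}$, so condition~1 still governs the two upper events; rerunning the argument but crudely bounding $\mathbb{P}_{\theta_0}(\widehat\tau\vee n^{-1}\le\tau)\le 1$ (the sharp part of condition~2 being unavailable) makes the over-shrinkage term at most $p_n\log n$ while every other term stays $\lesssim p_n\log(n/p_n)$, giving the stated $p_n\log n$ rate.
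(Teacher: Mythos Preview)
Your proposal is correct and follows essentially the same route as the paper: both adapt \citep{Johnstone2004}, splitting according to the size of $\widehat\tau$, invoking monotonicity of $\tau\mapsto|T_\tau(y)|$ to reduce the random-$\tau$ loss to deterministic-$\tau$ losses (the paper's version of your sandwich is $(T_{\widehat\tau}(y)-\theta)^2\le\theta^2+(T_{c\tau}(y)-\theta)^2$, i.e.\ the endpoint $\rho_1=0$), and using bounded shrinkage $\sup_y|T_\tau(y)-y|\lesssim\zeta_\tau$ together with conditions~1--2. The one technical difference worth noting is how the noise on the undershrinkage event is controlled: the paper bounds each $\E_{\theta_0}[Z_i^2\1_{\widehat\tau>c\tau}]$ coordinate-wise via a Young's-inequality argument, obtaining $\lesssim\mathbb{P}_{\theta_0}(\widehat\tau>c\tau)\log\bigl(1+1/\mathbb{P}_{\theta_0}(\widehat\tau>c\tau)\bigr)$, whereas you aggregate to $\|\varepsilon\|^2$ and truncate at $Kn\sigma^2$ using a $\chi^2_n$ tail bound; both deliver the required order, and your device is arguably more elementary.
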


The first condition requires that $\widehat\tau$ does not overestimate the fraction $\frac{p_n}{n}$ of nonzero means (up to a log factor) too much or by a too large probability.  If $p_n \geq 1$, as we have assumed, then it is satisfied already by $\widehat\tau = \frac{1}{n}$ (and $c=1$). According to the last assertion of the theorem, this `universal threshold' yields the rate $p_n\log n$ (possibly up to a multiplicative constant). This is equal to the rate of the Lasso estimator with the usual choice of $\lambda = 2\sqrt{2\sigma^2 \log n}$ \citep{Bickel2009}. However, in the framework where $p_n \to \infty$, the estimator $\widehat\tau = \frac{1}{n}$ will certainly underestimate the sparsity level. A more natural estimator of $\frac{p_n}{n}$ is:
\begin{equation}\label{eq:simpleestimator}
\widehat\tau =  \frac{\#\{|y_i| \geq \sqrt{c_1\sigma^2\log n}, i = 1, \ldots, n \}}{c_2n},
\end{equation}
where $c_1$ and $c_2$ are positive constants. By Lemma \ref{lem:schatter}, this estimator satisfies the first condition for $\tau = \frac{p_n}{n}$ and $\tau = \frac{p_n}{n}\sqrt{\log(n/p_n)}$ if $c_1 > 2, c_2 > 1$ and $p_n \to \infty$ or $c_1 = 2, c_2 > 1$ and $p_n \gtrsim \log n$. Thus $\max\left\{\widehat\tau, \frac{1}{n}\right\}$ will also lead to a rate of at most order $p_n\log n$ under these conditions. Its behaviour will be explored further in section \ref{sec:simulation}.

The rate can be improved to $p_n\log (n/p_n)$ if the second condition is met as well, which ensures that the sparsity level is not underestimated too much or by a too large probability. As we are not aware of any estimators meeting this condition for all $\theta_0$, this condition is currently mostly of theoretical interest. If the true mean vector is very sparse, in the sense that there are relatively few nonzero means or the nonzero means are close to zero, there is not much to be gained in terms of rates by meeting this condition.  The extra occurrence of $p_n$  relative to the rate $p_n\log n$ is of interest only if $p_n$ is relatively large. For instance, if $p_n \asymp n^\alpha$ for $\alpha \in (0, 1)$, then $p_n \log(n/p_n) = (1-\alpha)p_n\log n$, which suggests a decrease of the proportionality constant in (\ref{eq:empbayesresult}), particularly if $\alpha$ is close to one. Furthermore, when $p_n$ is large, the constant in \eqref{eq:empbayesresult} may be sensitive to the fine properties of $\widehat\tau$, as it depends on $g(n, p_n)$ (as can be seen in the proof). If $\widehat\tau$ seriously underestimates the sparsity level, the corresponding value of $g(n, p_n)$ from the second condition may be so small that  the upper bound on the multiplicative constant before (\ref{eq:empbayesresult}) becomes very large. Hence in this case, $\widehat\tau$ is required to be close to the proportion $\frac{p_n}{n}$ (up to a log factor) with large probability in order to get an optimal rate.

\cite{Datta2013} warned against the use of an empirical Bayes estimate of $\tau$ for the horseshoe prior, because the estimate might collapse to zero. Their references for this statement, \cite{Scott2010} and \cite{Bogdan2008}, indicate that they are thinking of a marginal maximum likelihood estimate of $\tau$. However, an empirical Bayes estimate of $\tau$ does not need to be based on this principle. Furthermore, an estimator that satisfies the second condition from Theorem \ref{thm:empiricalbayes} or that is truncated from below by $\frac{1}{n}$, would not be susceptible to this potential problem. 

\section{\label{sec:simulation}Simulation study}

A simulation study provides more insight into the behaviour of the horseshoe estimator, both when using an empirical Bayes procedure with estimator (\ref{eq:simpleestimator}) and when using the fully Bayesian procedure proposed by \cite{Carvalho2010} with a half-Cauchy prior on $\tau$. For each data point, 100 replicates of an $n$-dimensional vector sampled from a $\mathcal{N}(\theta_0, I _n)$ distribution were created, where $\theta_0$ had either 20, 40 or 200 (5\%, 10\% or 50\%) entries equal to an integer $A$ ranging from 1 to 10, and all the other entries equal to zero. The full Bayesian version was implemented using the code provided in \citep{Scott2010-2}, and the coordinatewise posterior mean was used as the estimator of $\theta_0$. For the empirical Bayes procedure, the estimator (\ref{eq:simpleestimator}) was used with $c_1 = 2$ and $c_2 = 1$. Performance was measured by squared error loss, which was averaged across replicates to create Figure \ref{fig:simulationresults}. 

\begin{figure}[ht!]
\begin{center}
\subfigure[]
{
\includegraphics[scale = 0.26]{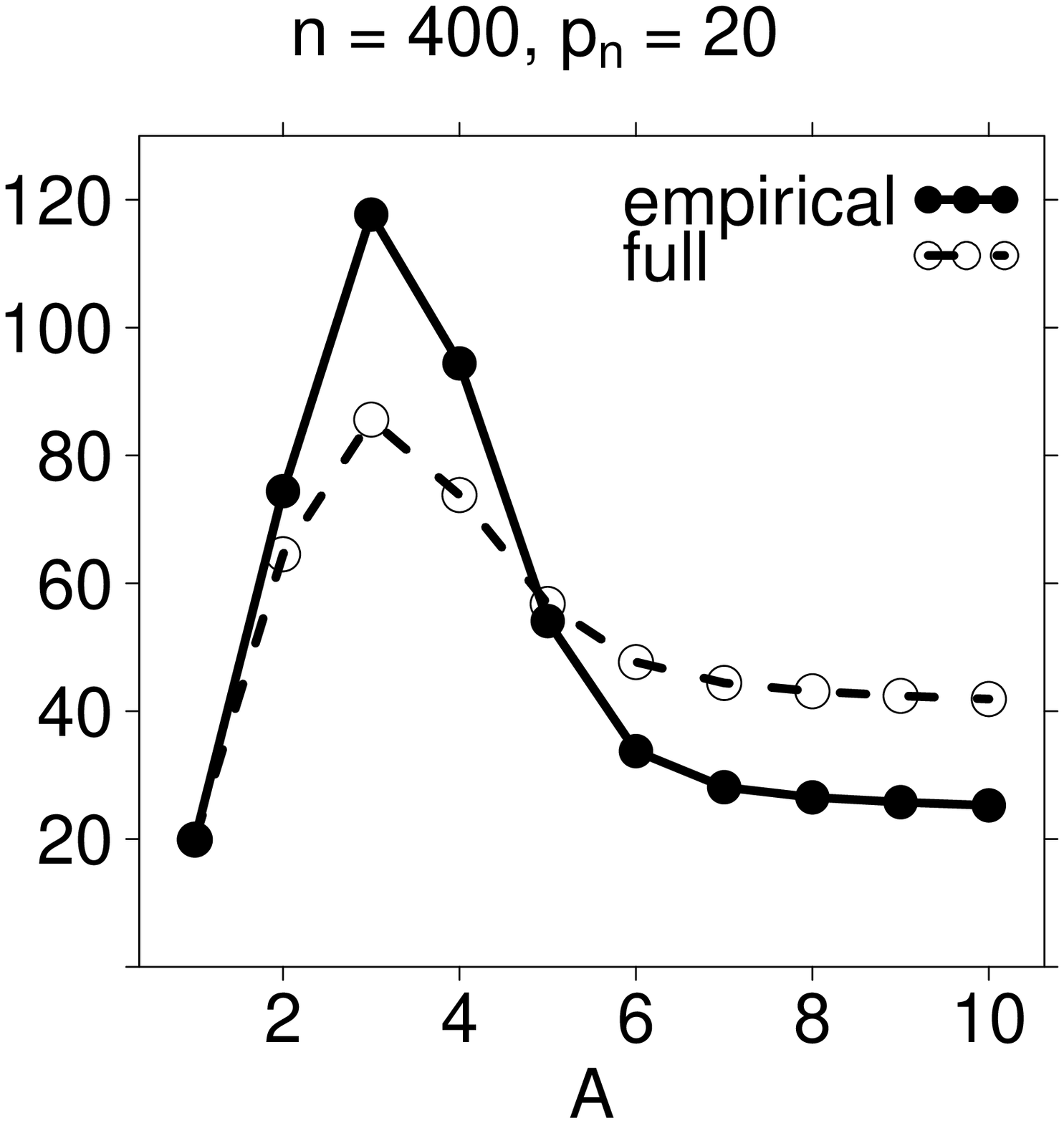} 
\label{fig:sim1}
}
\subfigure[]
{
\label{fig:sim2}
\includegraphics[scale=0.26]{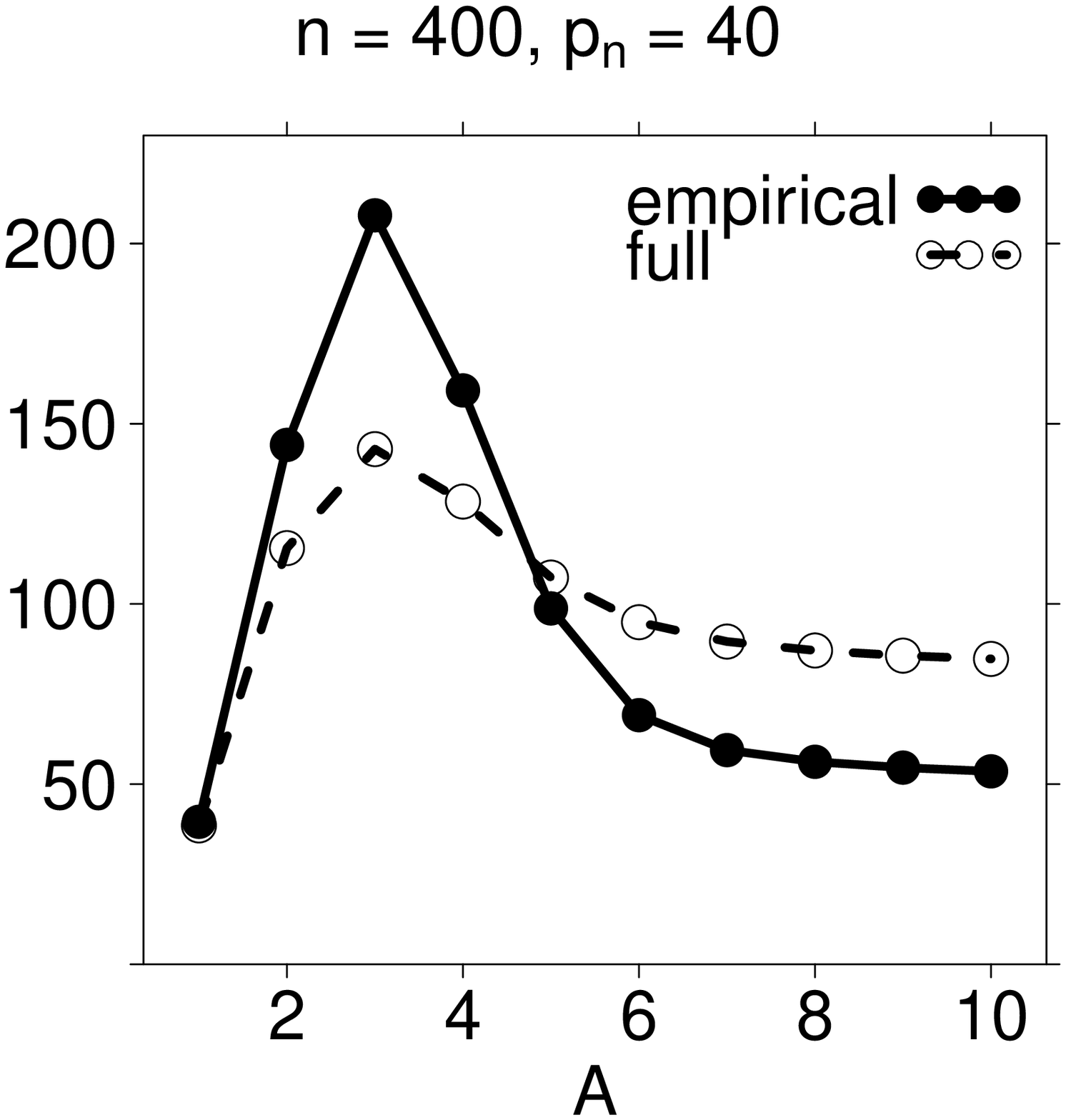} 
}
\subfigure[]
{
\label{fig:sim3}
\includegraphics[scale=0.26]{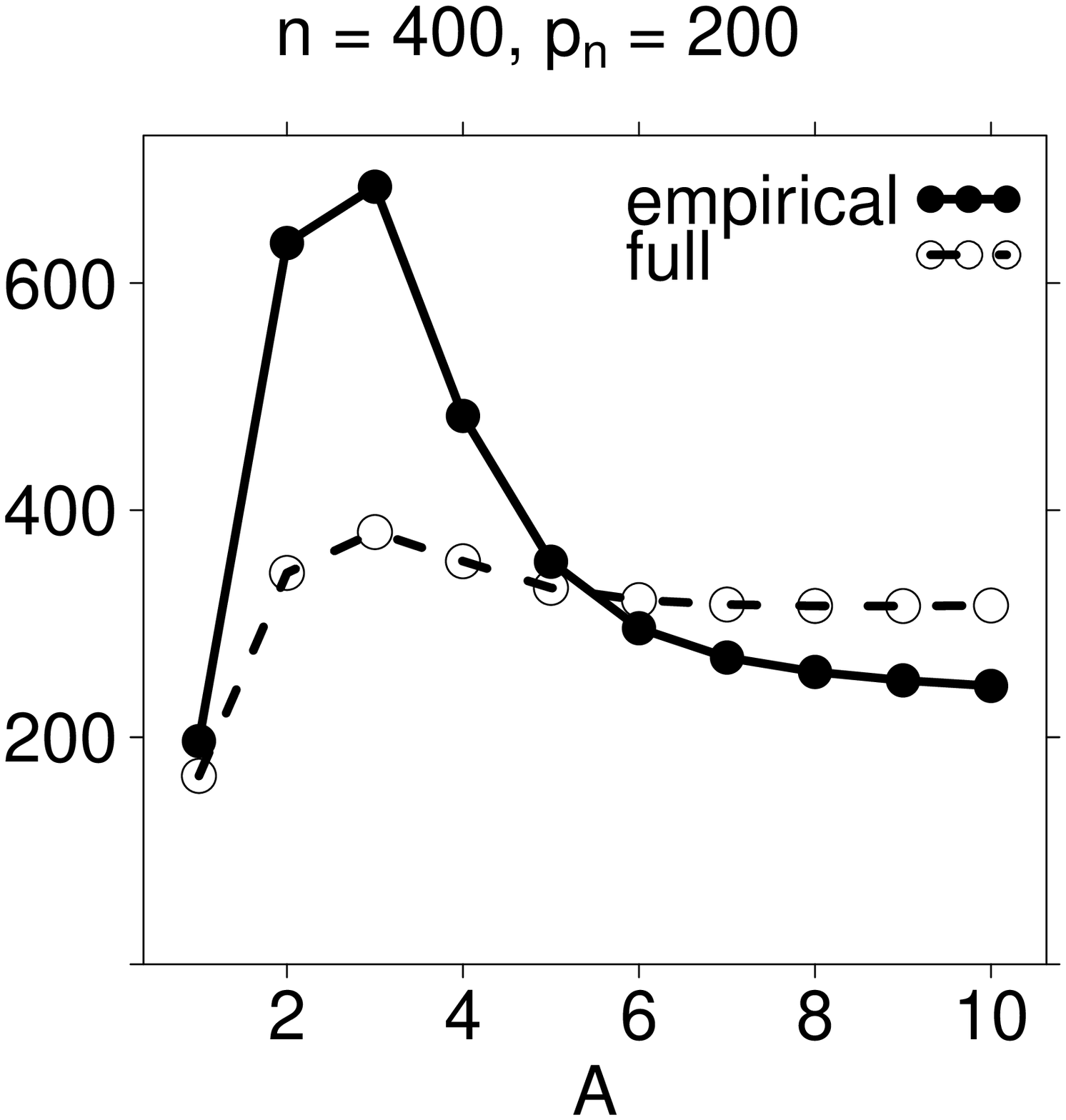} 
}
\subfigure[]
{
\label{fig:sim4}
\includegraphics[scale=0.26]{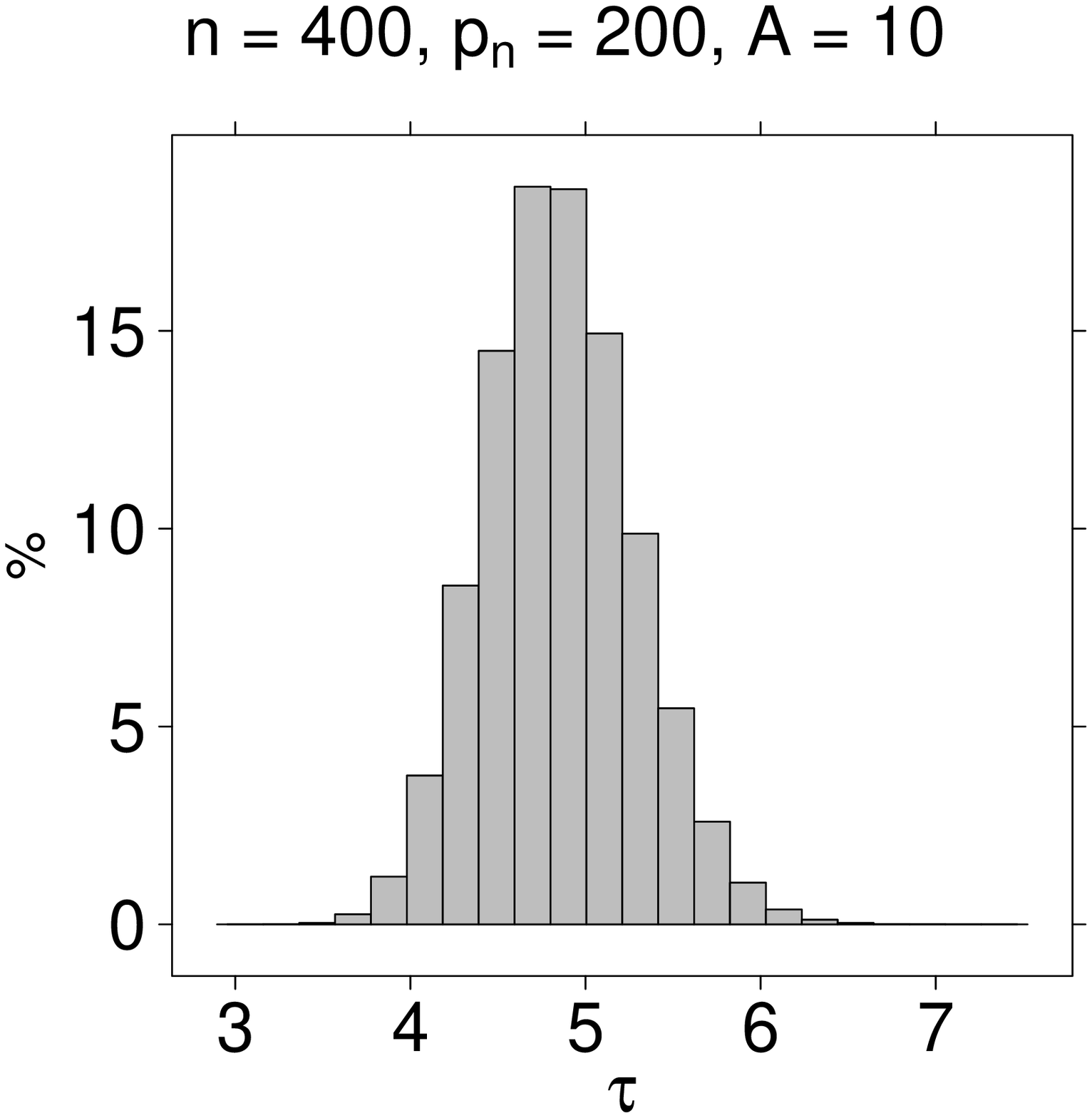} 
}
\caption{Average squared error loss over 100 replicates with underlying mean vectors of length $n = 400$ if the nonzero coefficients are taken equal to $A$, in case 5\% (Figure (a)), 10\% (Figure (b)) or 50\% (Figure (c)) of the means are equal to a nonzero value $A$. The solid line corresponds to empirical Bayes with (\ref{eq:simpleestimator}), $c_1 = 2, c_2=1$, the dashed line to full Bayes with a half-Cauchy prior on $\tau$. Figure (d) displays a histogram of all Gibbs samples of $\tau$ (after the burn-in) of all replicates in the setting $\tau \sim C^+(0,1)$, $A = 10$, $p_n = 200$.   }
\label{fig:simulationresults}
\end{center}
\end{figure}

In all settings, both estimators experience a peak in the $\ell_2$ loss for values of $A$ close to the `universal threshold' of $\sqrt{2\log 400} \approx 3.5$. This is not unexpected, as in the terminology of \cite{Johnstone2004}, the horseshoe estimator is a shrinkage rule, and while it is not a thresholding rule in their sense, it does have the bounded shrinkage property which leads to thresholding-like behaviour. The bounded shrinkage property can be derived from Lemma \ref{lem:diffbound}, which yields the following inequality as $\tau$ approaches zero:
\begin{equation*}
|T_\tau(y) - y| \leq \sqrt{2\sigma^2\log\frac{1}{\tau}}.
\end{equation*}
With $\tau = \frac{1}{n}$, this leads to the `universal threshold' of $\sqrt{2\sigma^2\log n}$, or with $\tau = \left(\frac{p_n}{n}\right)^\alpha$, a `threshold' at $\sqrt{2\alpha\sigma^2\log(n/p_n)}$. Based on this property and the proofs of the main results, we can divide the underlying parameters into three cases: 
\begin{enumerate}[(i)]
\item Those that are exactly or close to zero, where the observations are shrunk close to zero; 
\item Those that are larger than the threshold, where the horseshoe estimator essentially behaves like the identity;
\item Those that are close to the `threshold', where the horseshoe estimator is most likely to shrink the observations too much. 
\end{enumerate}
The horseshoe estimator performs well in cases (i) and (ii) due to its pole at zero and its heavy tails respectively. The hardest parameters to recover from the noise are those that are close to the threshold, and these are the ones that affect the estimation risk the most. This phenomenon explains the peaks in the graphs of Figure \ref{fig:simulationresults} around $A = 3.5$. 

The full Bayes implementation with a Cauchy prior on $\tau$ attains a lower $\ell_2$ loss around the universal threshold than the empirical Bayes procedure. This is because estimator (\ref{eq:simpleestimator}) counts the number of observations that are above the universal threshold. When all the nonzero means are close to this threshold, $\widehat\tau$ may `miss' some of them, thereby underestimating the sparsity level $\frac{p_n}{n}$ and thus leading to overshrinkage.

For values of $A$ well past the universal threshold, the empirical Bayes estimator does better than the full Bayes version. For such large values of $A$, the estimator (\ref{eq:simpleestimator}) will be equal to the true sparsity level with large probability and hence its good performance is not unexpected. However, an interesting question is why the full Bayes estimator does not do as well as the empirical Bayes estimator, especially because the nonzero means are so far removed from zero that the problem is `easy'. This could be due to the choice of a half-Cauchy prior for $\tau$: it places no restriction on the possible values of $\tau$ and has such heavy tails that values far exceeding the sparsity level $\frac{p_n}{n}$ are possible. This would lead to undershrinkage of the observations corresponding to a zero mean, which would be reflected in the $\ell_2$ loss. Figure \ref{fig:sim4} shows a histogram of all Gibbs samples of $\tau$ in the setting where 50\% of the means are set equal to 10. The range of these values is (3.1, 7.3), which is very far away from $\frac{p_n}{n} = \frac{1}{2}$. This indicates that a full Bayesian version of the horseshoe prior could benefit from a different choice of prior on $\tau$ than a half-Cauchy one, for example one that is restricted to [0,1].

\section{\label{sec:finalremarks}Concluding remarks} 
The choice of the global shrinkage parameter $\tau$ is critical towards ensuring the right amount of shrinkage of the observations to recover the underlying mean vector. The value of  $\tau = \frac{p_n}{n}\sqrt{\log(n/p_n)}$ was found to be optimal.  Theorem \ref{thm:empiricalbayes} indicates that quite a wide range of estimators for $\tau$ will work well, especially in cases where the underlying mean vector is sparse. Of course, it should not come as a surprise that an estimator designed to recover sparse vectors will work especially well if the truth is indeed sparse. An interesting extension to this work would be to investigate whether the posterior concentration properties of the horseshoe prior still remain when a hyperprior is placed on $\tau$. The result that $\tau = \frac{p_n}{n}$ (up to a log factor) yields optimal rates, together with the simulation results, suggests that in a fully Bayesian approach, a prior on $\tau$ which is restricted to $[0, 1]$ may perform  better than the suggested half-Cauchy prior. 

The simulation results also indicate that mean vectors with the nonzero means close to the universal threshold are the hardest to recover. In future simulations involving shrinkage rules, it would therefore be interesting to study the challenging case where all the nonzero parameters are at this threshold. The performance of the empirical Bayes estimator (\ref{eq:simpleestimator}) leaves something to be desired around the threshold. In additional numerical experiments (not shown), we tried two other estimators of $\tau$. The first was the `oracle estimator' $\widehat\tau = \frac{p_n}{n}$. For values of the nonzero means well past the `threshold', the behaviour of this estimator was very similar to that of (\ref{eq:simpleestimator}). However, before the threshold, the squared error loss of the empirical procedure with the oracle estimator was between that of the full Bayes estimator and empirical Bayes with estimator (\ref{eq:simpleestimator}). The second estimator was the mean of the samples of $\tau$ from the full Bayes estimator. The resulting squared error loss was remarkably close to that of the full Bayes estimator, for all values of the nonzero means. Neither of these two estimators is of much practical use. However, their range of behaviours suggests room for improvement over the estimator (\ref{eq:simpleestimator}), and it would be worthwhile to study more refined estimators for $\tau$. 

An interesting question is what aspects of the horseshoe prior are truly essential towards optimal posterior contraction properties.  Our proofs do not elucidate whether the pole at zero of the horseshoe prior is required, or if a prior with heavy tails, and in a sense `sufficient' mass at zero would work as well. The failure of the Lasso to concentrate around the true mean vector at the minimax rate does indicate that heavy tails in itself may not be sufficient, and  adding mass at zero solves this problem \citep{Castillo2013, Castillo2012}. It is possible that the pole at zero is inessential, in particular if the global tuning parameter is chosen carefully, for instance by empirical Bayes. If the tuning parameter is chosen by a full Bayes method, the peak may be more essential, depending on its prior.

The horseshoe estimator has the  property that its computational complexity depends on the sparsity level rather than  the number of observations. Although there is no point mass at zero to induce sparsity, it still yields good reconstruction in $\ell_2$, and a posterior distribution that contracts at an informative rate. None of the estimates will however be exactly zero. Variable selection can be performed by applying some sort of thresholding rule, such as the one suggested in \citep{Carvalho2010} and analyzed by \cite{Datta2013}. The performance of this thresholding rule in simulations in the two works cited has been encouraging.

\section*{Acknowledgements}
The authors would like to thank two anonymous referees for their helpful suggestions, as well as James Scott for his advice on implementing the full Bayesian version of the horseshoe estimator.

\appendix 
\section{\label{sec:proofs}Proofs} 
This section begins with Lemma \ref{lem:integralbounds}, providing bounds on some of the degenerate hypergeometric functions appearing in the posterior mean and posterior variance. This is followed by 
two lemmas that are needed for the proofs of Theorems \ref{thm:mse} and \ref{thm:var}: Lemma \ref{lem:postmeanupperbounds} provides two upper bounds on the horseshoe estimator and Lemma \ref{lem:diffbound} gives a bound on the absolute value of the difference between the horseshoe estimator and an observation. We then proceed to the proof of Theorem \ref{thm:mse}, after which Lemma \ref{lem:postvar} provides upper bounds on the posterior variance. These upper bounds are then used in the proof of Theorem \ref{thm:var}. The proof of Theorem \ref{thm:varlowerbound} is given next, followed by Lemmas \ref{lem:mismatch1} and \ref{lem:mismatch2} supporting  the proof of Theorem \ref{thm:mismatch}. This section concludes with the proofs of Theorem \ref{thm:empiricalbayes} and Lemma \ref{lem:schatter}, which both concern the empirical Bayes procedure discussed in section \ref{sec:empiricalbayes}.\\

\begin{lem}\label{lem:integralbounds}
Define\begin{equation*}
I_k(y) := \int_0^1 z^k \frac{1}{\tau^2 + (1- \tau^2)z}e^{\frac{y^2}{2\sigma^2} z} dz.
\end{equation*}
Then, for $a > 1$: 
\begin{align}
I_{\frac{3}{2}}(y) &\geq \frac{1}{5}\tau^3 + \sigma^2\frac{\tau}{y^2} \left(e^{\frac{y^2}{2a\sigma^2}} - e^{\tau^2\frac{y^2}{2\sigma^2}} \right) + \frac{\sigma^2}{\sqrt{a}y^2}\left(e^\frac{y^2}{2\sigma^2} - e^{\frac{y^2}{2a\sigma^2}}\right),\label{eq:lower3/2}\\
I_{\frac{1}{2}}(y) &\geq \frac{1}{3}\tau + \frac{\sigma^2}{y^2}\left(e^\frac{y^2}{2\sigma^2} - e^{\tau^2\frac{y^2}{2\sigma^2}}\right), \label{eq:lower1/2}\\
I_{\frac{1}{2}}(y) &\leq \frac{2}{3}e^{\tau^2\frac{y^2}{2\sigma^2}}\tau + 2e^\frac{y^2}{2a\sigma^2}\left(\frac{1}{\sqrt{a}} - \tau\right) + \frac{2\sqrt{a}\sigma^2}{y^2}\left(e^\frac{y^2}{2\sigma^2} - e^\frac{y^2}{2a\sigma^2}\right),\label{eq:upper1/2}\\
I_{-\frac{1}{2}}(y) &\geq \frac{1}{\tau} + e^{\tau^2\frac{y^2}{2\sigma^2}}\left(\frac{1}{\tau} - \frac{1}{\sqrt{\tau}}\right) + \frac{a\sqrt{a}\sigma^2}{y^2}\left(e^\frac{y^2}{2a\sigma^2} - e^{\tau\frac{y^2}{2\sigma^2}}\right)\notag\\
&\quad + \frac{\sigma^2}{y^2}\left(e^\frac{y^2}{2\sigma^2} - e^\frac{y^2}{2a\sigma^2}\right), \label{eq:lower-1/2}\\
I_{-\frac{1}{2}}(y) & \leq \frac{2e^{\tau^2\frac{y^2}{2\sigma^2}}}{\tau} + 2e^{\tau\frac{y^2}{2\sigma^2}}\left(\frac{1}{\tau} - \frac{1}{\sqrt{\tau}}\right) + 2e^\frac{y^2}{2a\sigma^2}\left(\frac{1}{\sqrt{\tau}} - \sqrt{a}\right) \notag \\
&\quad + \frac{2a\sqrt{a}\sigma^2}{y^2}\left(e^\frac{y^2}{2\sigma^2} - e^\frac{y^2}{2a\sigma^2}\right), \label{eq:upper-1/2}
\end{align}
where \eqref{eq:lower3/2} and \eqref{eq:upper1/2} hold for $\tau < 1/\sqrt{a}$, \eqref{eq:lower1/2} holds for $\tau < 1$, and \eqref{eq:lower-1/2} and \eqref{eq:upper-1/2} hold for $\tau < 1/a$.
\end{lem}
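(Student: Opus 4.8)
## Proof Plan for Lemma~\ref{lem:integralbounds}

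The plan is to obtain each of the six bounds by splitting the integration interval $[0,1]$ at one or two well-chosen breakpoints and bounding the integrand piecewise. The natural breakpoints are $z = \tau^2$ (or $\tau$) and $z = 1/a$, which are exactly the arguments appearing inside the exponentials on the right-hand sides. On each subinterval I will control the factor $1/(\tau^2 + (1-\tau^2)z)$ by a constant: on $[0,\tau^2]$ it lies between $1/(2\tau^2)$ and $1/\tau^2$ (using $(1-\tau^2)z \le \tau^2$ and $(1-\tau^2) \ge 1/2$ for small $\tau$); on $[\tau^2, 1/a]$ it is comparable to $1/z$ up to a factor $2$; and on $[1/a, 1]$ it is comparable to $1$ up to a factor $2$, again using $1 - \tau^2 \ge 1/2$. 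For the lower bounds I discard the genuinely positive leftover pieces or bound $e^{(y^2/2\sigma^2)z}$ below by its value at the left endpoint of the relevant subinterval; for the upper bounds I bound the exponential above by its value at the right endpoint. This is why the upper bounds carry the extra factors of $2$ and the shifted exponents.

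First I would treat $I_{3/2}$ and $I_{1/2}$, which only need the coarser two-piece split. For \eqref{eq:lower1/2}: on $[0,\tau^2]$ bound $z^{1/2}(\ldots) \ge z^{1/2}\cdot \tfrac{1}{2\tau^2}\cdot 1$ — actually more simply, on $[0,\tau]$ use $z^{1/2}/(\tau^2+(1-\tau^2)z) \ge$ something integrating to a multiple of $\tau$, giving the $\tfrac13\tau$ term after a $\beta$-type integral $\int_0^1 z^{-1/2}\,dz$ rescaling; on $[\tau^2,1]$ bound the denominator by $(1-\tau^2)z \le z$ is the wrong direction, so instead bound $1/(\tau^2+(1-\tau^2)z) \ge 1/(2z) $ wait — one wants a \emph{lower} bound, and $\tau^2 + (1-\tau^2)z \le \tau^2 + z \le 2z$ on $[\tau^2,1]$ gives $1/(\tau^2+(1-\tau^2)z) \ge 1/(2z)$; hmm but the stated constant is $\sigma^2/y^2$ not $\sigma^2/(2y^2)$, so in fact the sharper bound $\tau^2 + (1-\tau^2)z \le z$ on $[\tau^2,1]$ when $z \ge \tau^2$ holds once $\tau^2 + (1-\tau^2)z \le z \iff \tau^2 \le \tau^2 z$ which is false — so one keeps only the $z \ge \tau^{?}$ portion where it works, or uses $z^{1/2}\cdot\frac{1}{\tau^2+(1-\tau^2)z}$ and the substitution making $\int (\ldots) e^{(y^2/2\sigma^2)z}dz$ into a difference of exponentials via $\int \frac{d}{dz}e^{(y^2/2\sigma^2)z}$. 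The cleanest route: on each piece write $\frac{1}{\tau^2+(1-\tau^2)z}e^{(y^2/2\sigma^2)z} \ge c\cdot e^{(y^2/2\sigma^2)z}$ and integrate the exponential exactly, producing $\frac{\sigma^2}{y^2}(e^{(y^2/2\sigma^2)\cdot(\text{right})} - e^{(y^2/2\sigma^2)\cdot(\text{left})})$; the $z^{\pm 1/2}$ and $z^{3/2}$ factors are each bounded by a constant on the middle/right pieces (e.g. $z^{3/2} \le 1$, $z^{1/2}\ge \tau$ on $[\tau^2,\cdot]$ after noting $z \ge \tau^2$). Iterating this recipe with the three-piece split at $\tau$ (not $\tau^2$) and $1/a$ produces \eqref{eq:lower-1/2} and \eqref{eq:upper-1/2}, where the appearance of $\sqrt{\tau}$ rather than $\tau^2$ signals that the relevant breakpoint for the $I_{-1/2}$ bounds is $z=\tau$; the constraint $\tau < 1/a$ is exactly what is needed so that the three breakpoints $0 < \tau < 1/a < 1$ are ordered, while $\tau < 1/\sqrt a$ orders $0 < \tau^2 < 1/a < 1$ for \eqref{eq:lower3/2} and \eqref{eq:upper1/2}.

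The main obstacle is bookkeeping rather than conceptual: on each of the (up to three) subintervals one must pick the \emph{sharpest} constant bound on $\frac{z^k}{\tau^2+(1-\tau^2)z}$ that still yields exactly the stated coefficients (no stray factors of $2$ on the lower bounds, exactly a factor $2$ on the upper bounds), and then match the exponential-integral outputs to the telescoping differences $e^{y^2/(2\sigma^2)} - e^{y^2/(2a\sigma^2)}$, $e^{y^2/(2a\sigma^2)} - e^{\tau^2 y^2/(2\sigma^2)}$, etc. Getting the exponents and the $\sqrt{a}$, $a\sqrt{a}$ prefactors to come out correctly requires being careful that, e.g., on $[1/a,1]$ one has $z^{-1/2} \le \sqrt a$ while $z^{-1/2}\cdot\frac{1}{1-\tau^2} $ contributes the extra factor; on $[\tau,1/a]$ one has $z^{-1/2} \le 1/\sqrt\tau$ and $\frac{1}{\tau^2+(1-\tau^2)z} \le \frac{1}{z}\cdot\frac{1}{1-\tau^2}$ together with $z^{-1}\le a$, explaining the $a\sqrt a$ in \eqref{eq:lower-1/2} and \eqref{eq:upper-1/2}. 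Since everything reduces to elementary monotonicity estimates on explicit subintervals followed by exact integration of exponentials, there is no real analytic difficulty — the proof is a disciplined case-by-case computation, and I would present it by stating the three-piece decomposition once and then recording the bound on each piece for each $I_k$.
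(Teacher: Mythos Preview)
Your overall strategy is the paper's: split $[0,1]$ at $z=\tau^2$ and use the clean two-sided bounds $\tau^2\le \tau^2+(1-\tau^2)z\le 2\tau^2$ on $[0,\tau^2]$ and $z\le \tau^2+(1-\tau^2)z\le 2z$ on $[\tau^2,1]$ (valid for any $\tau<1$; this is where the exact factor $2$ in every upper bound, and its absence in every lower bound, comes from). After the substitution $u=\xi z$ with $\xi=y^2/(2\sigma^2)$, on each remaining subinterval one bounds whichever of $u^{k-1}$ or $e^u$ is convenient by its endpoint value and integrates the other exactly. This is exactly the bookkeeping you anticipate, and your reading of the side conditions ($\tau<1/\sqrt a$ ordering $\tau^2<1/a$, etc.) is right.

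One concrete correction: for $I_{-1/2}$ you propose a ``three-piece split at $\tau$ (not $\tau^2$) and $1/a$''. This will not reproduce \eqref{eq:lower-1/2} and \eqref{eq:upper-1/2}, which each have \emph{four} terms; the paper uses the four-piece partition $[0,\tau^2]\cup[\tau^2,\tau]\cup[\tau,1/a]\cup[1/a,1]$. The breakpoint $\tau^2$ is still required because the estimate $\tau^2+(1-\tau^2)z\asymp z$ fails below $z=\tau^2$; on $[0,\tau^2]$ one instead uses $\asymp\tau^2$, and $\tau^{-2}\int_0^{\tau^2}z^{-1/2}\,dz=2/\tau$ supplies the leading $1/\tau$ (resp.\ $2e^{\tau^2\xi}/\tau$) term. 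On $[\tau^2,\tau]$ the paper bounds $e^u$ by its endpoint value and integrates $u^{-3/2}$, producing the factor $2(1/\tau-1/\sqrt\tau)$; on $[\tau,1/a]$ and $[1/a,1]$ it bounds $u^{-3/2}$ by $(\xi/a)^{-3/2}$ resp.\ $\xi^{-3/2}$ and integrates $e^u$. With a three-piece split at $\tau$ only, the integrand on $[0,\tau]$ has $1/(\tau^2+(1-\tau^2)z)$ varying by a factor $1/\tau$, so no single constant bound recovers the stated coefficients. Apart from this miscount of pieces, your plan coincides with the paper's proof.
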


\begin{proof}
Write $\xi = y^2/(2\sigma^2)$. We first note that for $z \geq \tau^2$, we have $z \leq \tau^2 + (1-\tau^2)z \leq 2z$, while for $z \leq \tau^2$, we have $\tau^2 \leq \tau^2 + (1-\tau^2)z \leq 2\tau^2$. Hence, we can bound $I_k$ from above by:
\begin{equation*}
I_k(y) \leq \frac{1}{\tau^2} \int_0^{\tau^2} z^k e^{\xi z} dz + \int_{\tau^2}^1 z^{k-1} e^{\xi z} dz,
\end{equation*}
and from below by half of that quantity. We bound the integral over $[0, \tau^2]$ in all cases by bounding the factor $e^{\xi z}$ by 1 or $e^{\tau^2\xi}$. For the integral over $[\tau^2, 1]$, we first substitute $u = \xi z$, yielding: $\int_{\tau^2}^1 z^{k-1} e^{\xi z} dz = \xi^{-k}\int_{\tau^2 \xi}^\xi u^{k-1} e^u du$. For \eqref{eq:lower3/2} and \eqref{eq:upper1/2}, we split the domain of integration into $\left[\tau^2\xi, \frac{\xi}{a}\right]$ and $\left[\frac{\xi}{a}, \xi\right]$. For $I_\frac{3}{2}$, we bound by:
\begin{equation*}
I_{\frac{3}{2}}(y) \geq \frac{1}{2}\left(
\frac{1}{\tau^2} \int_0^{\tau^2} z^\frac{3}{2}dz
+
\xi^{-\frac{3}{2}} (\tau^2 \xi)^\frac{1}{2} \int_{\tau^2\xi}^\frac{\xi}{a} e^u du
+
\xi^{-\frac{3}{2}} \left(\frac{\xi}{a}\right)^\frac{1}{2} \int_\frac{\xi}{a}^\xi e^u du \right),
\end{equation*}
yielding (\ref{eq:lower3/2}). Similarly, for $I_\frac{1}{2}$:
\begin{equation*}
I_\frac{1}{2}(y) \leq
\frac{1}{\tau^2} e^{\tau^2\xi} \int_0^{\tau^2} z^\frac{1}{2}dz
+
\xi^{-\frac{1}{2}} e^\frac{\xi}{a} \int_{\tau^2 \xi}^\frac{\xi}{a} u^{-\frac{1}{2}}du
+
\xi^{-\frac{1}{2}} \left(\frac{\xi}{a}\right)^{-\frac{1}{2}} \int_\frac{\xi}{a}^\xi e^u du,
\end{equation*}
resulting in (\ref{eq:upper1/2}). The bound \eqref{eq:lower1/2} is obtained similarly, but without splitting up $[\tau^2\xi, \xi]$ further, by the inequality
\begin{equation*}
I_\frac{1}{2}(y) \geq \frac{1}{2\tau^2}\int_0^{\tau^2} z^\frac{1}{2}dz + \frac{1}{2}\xi^{-1}\int_{\tau^2\xi}^\xi e^u du.
\end{equation*}
For the bounds on $I_{-\frac{1}{2}}$, we split up the domain of integration $\left[\tau^2\xi, \xi\right]$ into $\left[\tau^2\xi, \tau\xi\right], \left[\tau\xi, \frac{\xi}{a}\right]$ and $\left[\frac{\xi}{a}, \xi\right]$, and then bound by:
\begin{align*}
I_{-\frac{1}{2}}(y) &\geq \frac{1}{2}\left(
\frac{1}{\tau^2} \int_0^{\tau^2} z^{-\frac{1}{2}}dz
+
\xi^\frac{1}{2} e^{\tau^2\xi} \int_{\tau^2\xi}^{\tau\xi} u^{-\frac{3}{2}}du
+
\xi^\frac{1}{2} \left(\frac{\xi}{a}\right)^{-\frac{3}{2}}\int_{\tau\xi}^\frac{\xi}{a} e^u du \right.\\
&\quad+ \left. 
\xi^\frac{1}{2} \xi^{-\frac{3}{2}} \int_\frac{\xi}{a}^\xi e^u du
\right),
\end{align*}
yielding (\ref{eq:lower-1/2}), and by:
\begin{align*}
I_{-\frac{1}{2}}(y) &\leq
\frac{1}{\tau^2}e^{\tau^2\xi}\int_0^{\tau^2} z^{-\frac{1}{2}}dz
+
\xi^\frac{1}{2} e^{\tau\xi} \int_{\tau^2\xi}^{\tau\xi} u^{-\frac{3}{2}}du
+ 
\xi^\frac{1}{2} e^\frac{\xi}{a} \int_{\tau\xi}^\frac{\xi}{a} u^{-\frac{3}{2}}du\\
&\quad+
\xi^\frac{1}{2} \left(\frac{\xi}{a}\right)^{-\frac{3}{2}} \int_{\frac{\xi}{a}}^\xi e^u du,
\end{align*}
to find (\ref{eq:upper-1/2}).
\end{proof}

\begin{lem}\label{lem:postmeanupperbounds}
If $\tau^2 < 1$, the posterior mean of the horseshoe prior can be bounded above by:
\begin{enumerate}
\item $T_\tau(y) \leq ye^{\frac{y^2}{2\sigma^2}}f(\tau)$, where $f$ is such that $ f(\tau) \leq \frac{2}{3}\tau$;

\item $T_\tau(y) \leq y \frac{\frac{2}{3}e^{\tau^2\frac{y^2}{2\sigma^2}}\tau + 2e^{\frac{y^2}{2a\sigma^2}}\left(\frac{1}{\sqrt{a}} - \tau\right) + \frac{2\sqrt{a}\sigma^2}{y^2}\left(e^\frac{y^2}{2\sigma^2}- e^\frac{y^2}{2a\sigma^2}\right)  }
{\frac{1}{\tau} + e^{\tau^2\frac{y^2}{2\sigma^2}}\left(\frac{1}{\tau} - \frac{1}{\sqrt{\tau}}\right) + \frac{a\sigma^2\sqrt{a}}{y^2}\left(e^\frac{y^2}{2a\sigma^2} - e^{\tau\frac{y^2}{2\sigma^2}}\right) + \frac{\sigma^2}{y^2}\left(e^\frac{y^2}{2\sigma^2} - e^\frac{y^2}{2a\sigma^2}\right) }$
, for any $a > 1$ and $\tau < \tfrac{1}{a}$. 

\end{enumerate}

\end{lem}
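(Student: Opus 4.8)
The plan is to start from the integral form $T_\tau(y)=y\,I_{1/2}(y)/I_{-1/2}(y)$, which is just the rightmost expression in \eqref{eq:pmexpression} rewritten in the $I_k$ notation of Lemma~\ref{lem:integralbounds}; throughout I assume $y>0$, the general case following since $T_\tau$ is odd. Part~(2) is then immediate: replace $I_{1/2}(y)$ in the numerator by its upper bound \eqref{eq:upper1/2} and $I_{-1/2}(y)$ in the denominator by its lower bound \eqref{eq:lower-1/2}, both of which hold for $a>1$ and $\tau<1/a$. The only point to verify is that the claimed lower bound for $I_{-1/2}(y)$ is strictly positive, so that the quotient inequality is preserved; this holds term by term, because $\tau<1/a<1$ forces $\tfrac1\tau>\tfrac1{\sqrt\tau}$ and gives $e^{\xi/a}>e^{\tau\xi}$ and $e^{\xi}>e^{\xi/a}$, where $\xi=y^2/(2\sigma^2)$.

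For Part~(1) I would pull the exponential out of the numerator and discard it in the denominator: since $1\le e^{\xi z}\le e^{\xi}$ for $z\in[0,1]$, we get $I_{1/2}(y)\le e^{\xi}I_{1/2}(0)$ and $I_{-1/2}(y)\ge I_{-1/2}(0)$, hence $T_\tau(y)\le y e^{y^2/(2\sigma^2)}f(\tau)$ with $f(\tau):=I_{1/2}(0)/I_{-1/2}(0)>0$. It remains to show $f(\tau)\le\tfrac23\tau$. For this I would compute both integrals in closed form via $z=u^2$, which makes them rational: $I_{-1/2}(0)=\tfrac{2}{\tau\sqrt{1-\tau^2}}\arctan\!\tfrac{\sqrt{1-\tau^2}}{\tau}$, and, using $\tfrac{u^2}{\tau^2+(1-\tau^2)u^2}=\tfrac{1}{1-\tau^2}\bigl(1-\tfrac{\tau^2}{\tau^2+(1-\tau^2)u^2}\bigr)$, $I_{1/2}(0)=\tfrac{2}{1-\tau^2}-\tfrac{\tau^2}{1-\tau^2}I_{-1/2}(0)$. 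Substituting these into $f(\tau)\le\tfrac23\tau$ and clearing the (positive) denominators reduces the claim to the single scalar inequality
\[
(2+3\tau-2\tau^2)\,\arctan\!\frac{\sqrt{1-\tau^2}}{\tau}\ \ge\ 3\sqrt{1-\tau^2},\qquad \tau\in(0,1).
\]

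This last inequality is where essentially all the work lies, and I would dispatch it by the substitution $\tau=\cos\psi$, $\psi\in(0,\tfrac\pi2)$, under which it becomes $g(\psi):=\psi(2+3\cos\psi-2\cos^2\psi)-3\sin\psi\ge0$. One has $g(0)=0$, $g(\tfrac\pi2)=\pi-3>0$, and a differentiation gives $g'(\psi)=\sin\psi\bigl(2\sin\psi+\psi(4\cos\psi-3)\bigr)$, so on $(0,\tfrac\pi2)$ the sign of $g'$ is that of $k(\psi):=2\sin\psi+\psi(4\cos\psi-3)$. Now $k(0)=0$ and $k'(\psi)=6\cos\psi-4\psi\sin\psi-3$ is strictly decreasing on $(0,\tfrac\pi2)$ (since $k''=-10\sin\psi-4\psi\cos\psi<0$), with $k'(0)=3>0$ and $k'(\tfrac\pi2)<0$; hence $k'$ changes sign exactly once, $k$ first increases then decreases, and since $k(\tfrac\pi2)=2-\tfrac{3\pi}{2}<0$ it follows that $k>0$ on some $(0,\psi_0)$ and $k<0$ on $(\psi_0,\tfrac\pi2)$. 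Therefore $g$ increases on $(0,\psi_0)$ and decreases on $(\psi_0,\tfrac\pi2)$; combined with $g(0)=0$ and $g(\tfrac\pi2)=\pi-3>0$ this forces $g\ge0$ on the whole interval, proving $f(\tau)\le\tfrac23\tau$. The main obstacle is precisely this elementary but fiddly monotonicity analysis, together with the bookkeeping that turns $f(\tau)\le\tfrac23\tau$ into the scalar inequality above; the remainder of the lemma is substitution and term-by-term comparison.
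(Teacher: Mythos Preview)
Your proof is correct and, for Part~(2) and the reduction to $f(\tau)=I_{1/2}(0)/I_{-1/2}(0)$ in Part~(1), identical to the paper's. The only substantive difference is in how the bound $f(\tau)\le\tfrac23\tau$ is established. The paper computes the same closed form for $f(\tau)$ and then invokes Shafer's inequality $\arctan x\ge \tfrac{3x}{1+2\sqrt{1+x^2}}$, which with $x=\sqrt{1-\tau^2}/\tau$ immediately gives $f(\tau)/\tau\le\tfrac{2}{3(1+\tau)}\le\tfrac23$ in one line. Your route instead proves the needed arctangent inequality from scratch via the substitution $\tau=\cos\psi$ and a two-derivative monotonicity argument. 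This is fully self-contained and avoids an external reference, at the cost of the ``fiddly'' analysis you flag; conversely, the paper's appeal to Shafer is shorter and in fact yields the slightly sharper bound $f(\tau)\le\tfrac{2}{3(1+\tau)}\tau$. Note that your scalar inequality $(2+3\tau-2\tau^2)\arctan(\sqrt{1-\tau^2}/\tau)\ge 3\sqrt{1-\tau^2}$ is exactly what Shafer's inequality implies after rearrangement, so the two arguments are proving equivalent facts.
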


\begin{proof}
We bound the integrals in the numerator and denominator of expression (\ref{eq:pmexpression}). For the first upper bound, we will use the fact that for $0 \leq z \leq 1$, $e^{\frac{y^2}{2\sigma^2}z}$ is bounded below by 1 and above by $e^{\frac{y^2}{2\sigma^2}}$. The posterior mean can therefore be bounded by:
\begin{equation*}
T_\tau (y) \leq ye^{\frac{y^2}{2\sigma^2}}
\frac{ \int_0^1 z^{\frac{1}{2}}\frac{1}{\tau^2 + \left(1 - \tau^2 \right)z } dz  } {\int_0^1 z^{-\frac{1}{2}}\frac{1}{\tau^2 + \left(1 - \tau^2\right)z } dz   }
= ye^{\frac{y^2}{2\sigma^2}}f(\tau),
\end{equation*}
where
\begin{equation*}
f(\tau) = \frac{\tau}{1-\tau^2}\left(\frac{\sqrt{1-\tau^2}}{\arctan\left(\frac{\sqrt{1-\tau^2}}{\tau} \right)} - \tau \right).
\end{equation*}
By Shafer's inequality for the arctangent \citep{Shafer1966}: 
\begin{align*}
\frac{f(\tau)}{\tau} = \frac{1}{1-\tau^2}\left(\frac{\sqrt{1-\tau^2}}{\arctan\left(\frac{\sqrt{1-\tau^2}}{\tau} \right)} - \tau \right) < \frac{2}{3}\frac{1}{1 + \tau} \leq \frac{2}{3},
\end{align*}
which completes the proof for the first upper bound.\\
\\
 For the second inequality, we note that, in the notation of Lemma \ref{lem:integralbounds}, $ T_\tau(y) = y\frac{I_\frac{1}{2}(y)}{I_{-\frac{1}{2}}(y)}$. The bounds in Lemma \ref{lem:integralbounds} yield the stated inequality. 
\end{proof}

\begin{lem}\label{lem:diffbound}
For $\tau^2 < 1$, the absolute value of the difference between the horseshoe estimator and an observation $y$ can be bounded by a function $h(y, \tau)$ such that for any $c > 2$:
\begin{equation*}
\lim_{\tau \downarrow 0} \sup_{|y| > \sqrt{c\sigma^2 \log \frac{1}{\tau}}} h(y, \tau) = 0.
\end{equation*}
\end{lem}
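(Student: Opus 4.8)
The plan is to produce an explicit expression for $h(y,\tau)$ by writing the difference as
\[
|T_\tau(y) - y| = |y|\frac{I_{-\frac12}(y) - I_{\frac12}(y)}{I_{-\frac12}(y)} = |y|\frac{\int_0^1 (z^{-1/2} - z^{1/2})\frac{1}{\tau^2+(1-\tau^2)z}e^{\xi z}\,dz}{\int_0^1 z^{-1/2}\frac{1}{\tau^2+(1-\tau^2)z}e^{\xi z}\,dz},
\]
where $\xi = y^2/(2\sigma^2)$, and then bounding numerator and denominator separately using the same elementary splitting-of-the-integration-domain technique as in Lemma~\ref{lem:integralbounds}. The key point is that $z^{-1/2} - z^{1/2} = z^{-1/2}(1-z) \geq 0$ on $[0,1]$, so the ratio is automatically nonnegative and bounded by $1$; the work is to show it is small, which means showing the numerator is small relative to $|y|$ times the denominator. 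First I would obtain an upper bound on the numerator (bounding $e^{\xi z} \leq e^{\xi}$ and handling the $1/(\tau^2+(1-\tau^2)z)$ factor by the $z \leq \tau^2$ vs.\ $z \geq \tau^2$ dichotomy, just as in the proof of Lemma~\ref{lem:integralbounds}) and a lower bound on the denominator $I_{-\frac12}(y)$, for which I can directly invoke \eqref{eq:lower-1/2}. Define $h(y,\tau)$ to be $|y|$ times the resulting ratio of the explicit bounds.

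The dominant contribution to the denominator, for the regime $|y|^2 \gtrsim \sigma^2\log(1/\tau)$, is the term $\frac{\sigma^2}{y^2}(e^{y^2/(2\sigma^2)} - e^{y^2/(2a\sigma^2)})$ from \eqref{eq:lower-1/2}, which is of order $\frac{\sigma^2}{y^2}e^{\xi}$. The numerator, after integration, will be dominated by a term of order $\frac{\sigma^2}{y^2}e^{\xi}$ multiplied by a small prefactor — the saving comes from the extra factor $(1-z)$ near $z=1$, which after the $u = \xi z$ substitution contributes a factor of order $1/\xi \asymp \sigma^2/y^2$ — together with lower-order pieces controlled by $\tau$. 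Thus the ratio is of order $\sigma^2/y^2$ up to constants and $\tau$-terms, so $h(y,\tau) \asymp |y|\cdot \sigma^2/y^2 = \sigma^2/|y|$ plus terms that vanish with $\tau$, and on the set $|y| > \sqrt{c\sigma^2\log(1/\tau)}$ this is at most of order $\sigma/\sqrt{\log(1/\tau)} \to 0$. I should double-check that the constraint $\tau < 1/a$ needed for \eqref{eq:lower-1/2} can be met by taking, say, a fixed $a$ (e.g.\ $a = 2$) as $\tau \downarrow 0$, or by letting $a = a(\tau) \to 1$ slowly; a fixed $a$ suffices since we only need a limit as $\tau \downarrow 0$.

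The main obstacle I anticipate is bookkeeping rather than conceptual: carefully tracking which terms in the numerator bound are genuinely of smaller order than $\frac{\sigma^2}{y^2}e^{\xi}$ on the relevant range of $y$, and verifying that the condition $|y| > \sqrt{c\sigma^2\log(1/\tau)}$ with $c > 2$ is exactly what is needed to kill the terms of the form $\tau e^{\tau^2 \xi}$, $e^{\xi/a}$, etc., relative to the main term. In particular the exponent comparison — that $\tau \cdot e^{\xi}$ divided by $e^{\xi}$ is $\tau \to 0$, while cross terms like $e^{\xi/a}/e^{\xi} = e^{-\xi(1-1/a)}$ vanish once $\xi \gtrsim \log(1/\tau)$ — is where the threshold $\sqrt{2\sigma^2\log(1/\tau)}$ (and the need for $c>2$, to beat $\tau^{-1}$-type factors coming from the denominator's leading $1/\tau$ and from $e^{\tau^2\xi}$ contributions) enters, and this is the step that requires the most care. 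Everything else is routine estimation of the kind already carried out in Lemma~\ref{lem:integralbounds}.
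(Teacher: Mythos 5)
Your starting identity is in fact the same as the paper's: writing $|T_\tau(y)-y| = |y|\bigl(I_{-\frac12}(y)-I_{\frac12}(y)\bigr)/I_{-\frac12}(y)$ with $z^{-1/2}-z^{1/2}=z^{-1/2}(1-z)$ is exactly the paper's change of variables $x=1-z$, which produces the extra factor $x$ in the numerator. Where you genuinely diverge is in how the integrals are then estimated: the paper follows Watson's lemma, Taylor-expanding $g(x)=(1-x)^{-1/2}(1-(1-\tau^2)x)^{-1}$ at $x=0$ on $[0,s]$ and crudely bounding the tail $[s,1]$, whereas you reuse the domain-splitting machinery of Lemma \ref{lem:integralbounds} (split at $z=\tau^2$ and $z=1/a$, substitute $u=\xi z$, and invoke \eqref{eq:lower-1/2} for the denominator). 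Your route is arguably more economical since it recycles bounds already established in the appendix and avoids the second-derivative monotonicity argument; the paper's route makes the leading asymptotic constant more visible. Both are Laplace-method arguments localizing near $z=1$, and both correctly identify that the extra factor $(1-z)$ buys the crucial $1/\xi\asymp\sigma^2/y^2$, so the ratio times $|y|$ is $O(\sigma/\sqrt{\log(1/\tau)})$ on the stated range.

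One quantitative point in your sketch needs correcting, though it is not a conceptual gap. A fixed $a=2$ does \emph{not} suffice for all $c>2$: the cross term coming from the middle interval $[\tau^2\xi,\xi/a]$ of the numerator is of order $\tau^{-1}e^{\xi/a}$, and after dividing by the denominator's leading term $\xi^{-1}e^{\xi}$ and multiplying by $|y|\asymp\xi^{1/2}$ one gets $\xi^{3/2}\tau^{-1}e^{-(1-1/a)\xi}$, which at the worst case $\xi=(c/2)\log(1/\tau)$ behaves like $(\log(1/\tau))^{3/2}\,\tau^{(1-1/a)c/2-1}$. This vanishes only if $(1-1/a)c/2>1$, i.e.\ $a>c/(c-2)$; so $a$ must be taken \emph{large} (not $a\to1$, which is the wrong direction) and must depend on how close $c$ is to $2$. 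This exactly mirrors the paper's remark that $s=2/3$ only yields $c>3$ and that $s$ must be taken close to $1$ to reach any $c>2$. Also beware that the global bound $e^{\xi z}\le e^{\xi}$ mentioned in your first paragraph is too crude for the numerator (it loses a factor $\xi/\tau$); you must keep the exponential inside the integral on the top piece $[\xi/a,\xi]$ and use $\int_{\xi/a}^{\xi}\frac{\xi-u}{\xi}e^{u}\,du\le e^{\xi}/\xi$, which is what your second paragraph in effect describes. With those two details fixed, the argument closes.
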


\begin{proof}
We assume $y > 0$ without loss of generality. By a change of variables of $x = 1 - z$:
\begin{equation*}
|T_\tau(y) - y| 
= y \frac{ \int_0^1 e^{-\frac{y^2}{2\sigma^2}x}x(1-x)^{-\frac{1}{2}} \frac{1}{1 - (1-\tau^2)x} dx   } {\int_0^1 e^{-\frac{y^2}{2\sigma^2}x}(1-x)^{-\frac{1}{2}} \frac{1}{1 - (1-\tau^2)x} dx   }.
\end{equation*}
By following the proof of Watson's lemma provided in \cite{Miller2006}, we can find bounds on the numerator and denominator of the above expression. First define $g(x) = (1-x)^{-\frac{1}{2}} \frac{1}{\1 - (1-\tau^2)x}$ and note that by Taylor's theorem, $g(x) = g(0) + xg'(\xi_x),
$ where $\xi_x$ is between 0 and $x$. Let $s$ be any number between 0 and 1. Because $g''(x)$ is not negative for $x \in [0, 1)$, we have that for $x \in [0, s]$, $s \in (0, 1)$: $g'(0) \leq g'(x) \leq g'(s)$. The numerator can then be bounded by:
\begin{align*}
\int_0^1 e^{-\frac{y^2}{2\sigma^2}x}xg(x) dx 
&= \int_0^s e^{-\frac{y^2}{2\sigma^2}x}xg(0)dx + \int_0^s e^{-\frac{y^2}{2\sigma^2}x}x^2 g'(\xi_x)dx\\
&\quad + \int_s^1 e^{-\frac{y^2}{2\sigma^2}x}xg(x) dx\\
&\leq \frac{1}{y^4}h_1(y, \sigma, s) + \frac{g'(s)}{y^6}h_2(y, \sigma, s) + 2e^{-\frac{sy^2}{2\sigma^2}} h_3(\tau),
\end{align*}
where $h_1(y, \sigma, s) = 4\sigma^4 - 2\sigma^2(sy^2 + 2\sigma^2)e^{-\frac{sy^2}{2\sigma^2}}$, $h_2(y, \sigma, s) = 16\sigma^6 - 2\sigma^2(s^2y^4 + 4s\sigma^2y^2 + 8\sigma^4)e^{-\frac{sy^2}{2\sigma^2}}$ and $h_3(\tau) =  \arctan\left(\frac{\sqrt{1-\tau^2}}{\tau}\right)\tau^{-1}(1-\tau^2)^{-\frac{3}{2}} - (1-\tau^2)^{-1}$. The denominator can similarly be bounded by:
\begin{align*}
\int_0^1 e^{-\frac{y^2}{2\sigma^2}x}g(x)dx 
&= \int_0^s e^{-\frac{y^2}{2\sigma^2}x}g(0)dx + \int_0^s e^{-\frac{y^2}{2\sigma^2}x}x g'(\xi_x)dx\\ 
&\quad + \int_s^1 e^{-\frac{y^2}{2\sigma^2}x}g(x) dx\\
&\geq \frac{1}{y^2}h_4(y, \sigma, s) + \frac{g'(0)}{y^4}h_5(y,\sigma,s) + 0,
\end{align*}
where $h_4(y, \sigma, s) = 2\sigma^2 - 2\sigma^2e^{-\frac{sy^2}{2\sigma^2}}$ and $h_5(y, \sigma, s) = 4\sigma^4 - 2\sigma^2e^{-\frac{sy^2}{2\sigma^2}}(sy^2 + 2\sigma^2)$. Hence:
\begin{equation*}
|T_\tau(y) - y| \leq \frac{\frac{1}{y}h_1(y, \sigma, s) + \frac{g'(s)}{y^3}h_2(y, \sigma, s) + 2y^3e^{-\frac{sy^2}{2\sigma^2}} h_3(\tau) }{h_4(y, \sigma, s) + \frac{g'(0)}{y^2}h_5(y, \sigma, s)  }.
\end{equation*}
For any fixed $\tau$, this bound tends to zero as $y$ tends to infinity. If $\tau \to 0$, the term containing $h_3(\tau)$ could potentially diverge. For $s = \frac{2}{3}$ and $y = \sqrt{c\sigma^2\log(1/\tau)}$, where $c$ is a positive constant, this term displays the following limiting behaviour as $\tau \to 0$:
\begin{align*}
\lim_{\tau \downarrow 0}y^3 e^{-\frac{1}{3\sigma^2}y^2}h_{3}(\tau)
&= \lim_{\tau \downarrow 0} \left(c\sigma^2\log\frac{1}{\tau} \right)^{\frac{3}{2}}\tau^{\frac{c}{3}-1}\left(\frac{\arctan\left(\frac{\sqrt{1-\tau^2}}{\tau}\right) }{(1-\tau^2)^\frac{3}{2}} - \frac{\tau}{1-\tau^2} \right)\\
&= \begin{cases}
0 & c > 3\\
\infty & \text{otherwise},
\end{cases}
\end{align*}
because $\lim_{\tau \downarrow 0} \arctan\left(\frac{\sqrt{1-\tau^2}}{\tau}\right) (1-\tau^2)^{-\frac{3}{2}} = \frac{\pi}{2}$, $\lim_{\tau \downarrow 0} \frac{\tau}{1-\tau^2} = 0$ and  the factor $\left(c\sigma^2\log(1/\tau) \right)^{\frac{3}{2}}  \tau^{ \frac{c}{3} - 1}$ tends to zero as $\tau \downarrow 0$ if $\frac{c}{3}-1> 0$ and infinity otherwise. The condition $c > 3$ is related to the choice of $s = \frac{2}{3}$ and can be improved to any constant strictly greater than $2$ by choosing $s$ appropriately close to one.  Hence, we find that the absolute value of the difference between the posterior mean and an observation can be bounded by a function $h(y, \tau)$ with the desired property.
\end{proof}

\textbf{Proof of Theorem \ref{thm:mse} }

\begin{proof}
Suppose that $Y \sim \mathcal{N}(\theta, \sigma^2I_n)$, $\theta \in \ell_0[p_n]$ and $\tilde p_n = \#\{i: \theta_i \neq 0\}$. Note that $\tilde p_n \leq p_n$. Assume without loss of generality that  for $i = 1, \ldots, \tilde p_n$, $\theta_i \neq 0$, while for $i = \tilde p_n + 1, \ldots, n$, $\theta_i = 0$. We split up the expectation $\E_\theta \|T_\tau(Y) - \theta\|^2 $ into the two corresponding parts:
\begin{align*}
\sum_{i=1}^n \E_{\theta_i}(T_\tau(Y_i) - \theta_i)^2 &= \sum_{i=1}^{\tilde p_n} \E_{\theta_i}(T_\tau(Y_i) - \theta_i)^2 + \sum_{i=\tilde p_n+1}^n \E_0 T_\tau(Y_i)^2.
\end{align*}
We will now show that these two terms can be bounded by $\tilde p_n(1 + \log \frac{1}{\tau})$  and  $(n-\tilde p_n)\sqrt{\log(1/\tau)}\tau$ respectively, up to multiplicative constants only depending on $\sigma$, for any choice of $\tau$ such that $\tau \in (0,1)$. 

\textit{Nonzero parameters}\\
Denote $\zt = \sqrt{2\sigma^{2} \log (1/\tau)}$.  We will show 
\begin{equation}\label{eq:goalmsenonzero}
\E_{\theta_i}(T_\tau(Y_i) - \theta_i)^2 \lesssim \sigma^2 + \zt^2.
\end{equation}
for all nonzero $\theta_i$, which can be done by bounding $\sup_y |T_\tau(y) - y|$: 
\begin{align*}
\E_{\theta_i}(T_\tau(Y_i) - \theta_i)^2 &= \E_{\theta_i}( (T_\tau(Y_i) - Y_i) + (Y_i - \theta_i))^2 \\
&\leq 2\E_{\theta_i}(Y_i - \theta_i)^2 + 2\E_{\theta_i}(T_\tau(Y_i) - Y_i)^2\\
&\leq 2\sigma^2 +  2\left(\sup_y |T_\tau(y) - y|\right)^2,
\end{align*} 
Lemma \ref{lem:diffbound} yields the following bound on the difference between the observation and the horseshoe estimator:
$|T_\tau(y) - y| \leq h(y, \tau)$,
where $h(y, \tau)$ is such that 
$\lim_{\tau \downarrow 0} \sup_{|y| > c\zt} h(y, \tau) = 0$
for any $c > 1$. Combining this with the inequality $|T_\tau(y) - y| \leq |y|$, we have as $\tau \to 0$:
\begin{equation}\label{eq:maxdiff}
\arg\max_y |T_\tau(y) - y| \lesssim \zt,
\end{equation}
which implies (\ref{eq:goalmsenonzero}), as $|T_\tau(y)| \leq |y|$:
\begin{equation*}
\left(\sup_y |T_\tau(y) - y|\right)^2 \lesssim \zt^2.
\end{equation*}

\textit{Parameters equal to zero}\\
We split up the term for the zero means into two parts:
\begin{equation*}
\E_0 T_\tau(Y)^2 = \E_0 T_\tau(Y)^2\1_{|Y| \leq \zt} + \E_0 T_\tau(Y)^2\1_{|Y| > \zt},
\end{equation*}
where $\zt = \sqrt{2\sigma^2\log(1/\tau)}$.  For the first term, we have, by the first bound in Lemma \ref{lem:postmeanupperbounds}:
\begin{align*}
\E_0T_\tau(Y)^2\1_{\{|Y| \leq \zt\}}
&= \int_{-\zt}^{\zt} T_\tau(y)^2 \frac{1}{\sqrt{2\pi\sigma^2}}e^{-\frac{y^2}{2\sigma^2}}dy\\
&\leq \int_{-\zt}^{\zt}  y^2e^{\frac{y^2}{\sigma^2}}f(\tau)^2  \frac{1}{\sqrt{2\pi\sigma^2}}e^{-\frac{y^2}{2\sigma^2}}dy
= \frac{f(\tau)^2}{\sqrt{2\pi\sigma^2} } \int_{-\zt}^{\zt}   y^2e^{\frac{y^2}{2\sigma^2}}dy\\
&\leq  \sqrt{\frac{2}{\pi}}\sigma f(\tau)^2 \zt \frac{1}{\tau}
\leq \sqrt{\frac{2}{\pi}}\sigma \frac{4}{9}\zt\tau \lesssim \zt \tau,
\end{align*}
where the identity $\frac{d}{dy} y e^\frac{y^2}{2\sigma^2} = \frac{y^2}{\sigma^2}e^\frac{y^2}{2\sigma^2} + e^\frac{y^2}{2\sigma^2}$ was used to bound $\int_{-\zt}^{\zt}   y^2e^{\frac{y^2}{2\sigma^2}}dy$. For the second term, because $|T_\tau(y)| \leq |y|$ for all $y$, we have by the identity $y^2\phi(y) = \phi(y) - \frac{d}{dy}[y\phi(y)]$, and by Mills' ratio:
\begin{align*}
\E_0 T_\tau(Y)^2\1_{\{|Y| > \zt\}} &\leq \E_0 Y^2\1_{\{|Y| > \zt\}} = 2\int_\frac{\zt}{\sigma}^\infty \sigma^2y^2 \phi(y) dy\\
&\leq 2\sigma\zt\phi\left(\frac{\zt}{\sigma}\right) + 2\sigma^3\frac{\phi\left(\frac{\zt}{\sigma}\right)}{\zt}
\leq 4\sigma \zt\phi\left(\frac{\zt}{\sigma}\right) = 4\sigma\zt\frac{1}{\sqrt{2\pi}}\tau,
\end{align*}
where the last inequality holds for $\zt > \sigma^{2}$. If we apply this inequality and combine this upper bound with the upper bound on the first term, we find, for $\zt > \sigma^{2}$ (corresponding to $\tau < e^{-\frac{\sigma^{2}}{2}}$): 
\begin{equation}\label{eq:goalmsezero}
\E_0 T_\tau(Y)^2 = \E_0 T_\tau(Y)^2\1_{\{|Y| \leq \zt\}} + \E_0 T_\tau(Y)^2\1_{\{|Y| > \zt\}} \lesssim \zt\tau.
\end{equation}
Hence, for $\tau <  e^{-\frac{\sigma^{2}}{2}}$: 
\begin{equation} \label{eq:boundzero}
 \sum_{i=p_n+1}^n \E_0 T_\tau(Y_i)^2  \lesssim (n-p_n)\zt\tau.\\
\end{equation}

\textit{Conclusion}\\
By (\ref{eq:goalmsenonzero}) and (\ref{eq:boundzero}), we find for $\tau <  e^{-\frac{\sigma^{2}}{2}}$:
\begin{align*}
\sum_{i=1}^n \E_{\theta_i}(T_\tau(Y_i) - \theta_i)^2 
&\lesssim \tilde p_n(1 +  \zt^2) + (n-\tilde p_n)\tau \zt. 
\end{align*} 
\end{proof}

\begin{lem} \label{lem:postvar}
The posterior variance when using the horseshoe prior can be expressed as:
\begin{equation} \label{eq:postvar}
\var(\theta \mid y) = \frac{\sigma^2}{y}T_\tau(y) - \left(T_\tau(y) - y\right)^2 
 + y^2 \frac{\displaystyle \int_0^1 (1-z)^2z^{-\frac{1}{2}}\frac{1}{\tau^2 + (1-\tau^2)z }e^{\frac{y^2}{2\sigma^2}z} dz  } {\displaystyle\int_0^1 z^{-\frac{1}{2}}\frac{1}{\tau^2 + (1- \tau^2)z}e^{\frac{y^2}{2\sigma^2}z} dz   },
\end{equation}
and bounded from above by:
\begin{enumerate}
\item $\var(\theta \mid y) \leq \sigma^2 + y^2$;
\item $\var(\theta \mid y) \leq \left(\frac{\sigma^2}{y} + y\right)T_\tau(y) - T_\tau(y)^2$.
\end{enumerate}
\end{lem}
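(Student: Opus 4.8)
The plan is to reduce everything to the posterior law of the shrinkage weight $\kappa = (1+\tau^2\lambda^2)^{-1}$ (I drop the coordinate subscript), exploiting that conditionally on $\kappa$ the horseshoe hierarchy is a conjugate normal model, so that $\theta \mid y, \kappa \sim \mathcal{N}\bigl((1-\kappa)y,\ (1-\kappa)\sigma^2\bigr)$.

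First I would apply the law of total variance: $\var(\theta\mid y) = \E[\var(\theta\mid y,\kappa)\mid y] + \var(\E[\theta\mid y,\kappa]\mid y) = \sigma^2\,\E[1-\kappa\mid y] + y^2\,\var(\kappa\mid y)$. Since $T_\tau(y) = y\,\E[1-\kappa\mid y]$ by the Fubini argument already used in Section \ref{sec:horseshoe}, the first term equals $\sigma^2 T_\tau(y)/y$. Expanding $\var(\kappa\mid y) = \E[\kappa^2\mid y] - (\E[\kappa\mid y])^2$ and using $y\,\E[\kappa\mid y] = y - T_\tau(y)$ turns the second term into $y^2\E[\kappa^2\mid y] - (T_\tau(y)-y)^2$.

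Next I would identify $\E[\kappa^2\mid y]$ with the ratio of integrals in \eqref{eq:postvar}. Integrating $\theta$ out of the hierarchy gives $y\mid\kappa \sim \mathcal{N}(0,\sigma^2/\kappa)$, so the likelihood of $\kappa$ is proportional to $\kappa^{1/2}e^{-\kappa y^2/(2\sigma^2)}$; multiplying by the prior density $p_\tau(\kappa)\propto \kappa^{-1/2}(1-\kappa)^{-1/2}(1-(1-\tau^2)\kappa)^{-1}$ recorded in Section \ref{sec:horseshoe} and substituting $z = 1-\kappa$ (for which $1-(1-\tau^2)\kappa = \tau^2 + (1-\tau^2)z$) shows that the posterior density of $z$ given $y$ is proportional to $z^{-1/2}(\tau^2+(1-\tau^2)z)^{-1}e^{y^2 z/(2\sigma^2)}$ on $(0,1)$; this is consistent with \eqref{eq:pmexpression} for $T_\tau(y) = y\,\E[z\mid y]$, and yields $\E[\kappa^2\mid y] = \E[(1-z)^2\mid y]$ as the stated ratio. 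Substituting into the previous step gives the identity \eqref{eq:postvar}, with the case $y=0$ handled by continuity (the first term then reading $\sigma^2\E[z\mid y=0]$).

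For the two upper bounds I would simply use $0\le\kappa\le1$, hence $\kappa^2\le\kappa$. For (1): in the law-of-total-variance identity, $\E[1-\kappa\mid y]\le1$ and $\var(\kappa\mid y)\le\E[\kappa^2\mid y]\le\E[\kappa\mid y]\le1$, so $\var(\theta\mid y)\le\sigma^2+y^2$. For (2): $\var(\kappa\mid y) \le \E[\kappa\mid y] - (\E[\kappa\mid y])^2 = \E[\kappa\mid y]\bigl(1-\E[\kappa\mid y]\bigr)$, and since $\E[\kappa\mid y] = 1 - T_\tau(y)/y$ while $1-\E[\kappa\mid y] = T_\tau(y)/y$, this gives $y^2\var(\kappa\mid y)\le (y-T_\tau(y))T_\tau(y) = yT_\tau(y) - T_\tau(y)^2$; adding $\sigma^2 T_\tau(y)/y$ produces (2). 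The only genuinely delicate point is the bookkeeping in the change of variables $z=1-\kappa$ — matching $1-(1-\tau^2)\kappa$ with $\tau^2+(1-\tau^2)z$ and checking that the $e^{-y^2/(2\sigma^2)}$ factor cancels in the normalization so that the ratio matches \eqref{eq:postvar} exactly — together with justifying the interchanges of expectation, which is the same argument already invoked for $T_\tau$.
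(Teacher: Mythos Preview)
Your argument is correct and complete. It does, however, take a different route from the paper's proof. The paper obtains \eqref{eq:postvar} analytically via the Pericchi--Smith identity $\var(\theta\mid y)=\sigma^2+\sigma^4(d^2/dy^2)\log m(y)$ for the marginal density $m$, computing $m$ and $m''$ explicitly and substituting $T_\tau(y)=y+\sigma^2 m'(y)/m(y)$; bounds (1) and (2) then follow from $(1-z)^2\le 1$ and $(1-z)^2\le 1-z$ inside the numerator integral. Your approach is probabilistic: condition on $\kappa$, use the law of total variance, and identify the posterior law of $z=1-\kappa$ to read off $\E[\kappa^2\mid y]$. The inequalities you invoke ($\kappa^2\le\kappa$, $\kappa\le 1$) are exactly the same as the paper's $(1-z)^2\le 1-z$ and $(1-z)^2\le 1$ after the change of variables, so the two proofs diverge only in how they reach \eqref{eq:postvar}. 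Your route is more self-contained (no external citation needed) and makes the structure of the bounds transparent as moment inequalities for $\kappa\in[0,1]$; the paper's route has the minor advantage that it obtains the marginal $m(y)$ along the way, which is not otherwise needed here.
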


\begin{proof}
As proven in \cite{Pericchi1992}:
\begin{equation*}
\var(\theta \mid y) = \sigma^2 + \sigma^4 \frac{d^2}{d y^2} \log m(y) = \sigma^2 - \left(\sigma^2 \frac{m'(y)}{m(y)} \right)^2 + \sigma^4 \frac{m''(y)}{m(y)},
\end{equation*}
where $m(y)$ is the density of the marginal distribution of $y$. Equality (\ref{eq:postvar}) can be found by combining the expressions
\begin{align*}
m(y) &= \frac{1}{\sqrt{2\pi^3}\sigma\tau}e^{-\frac{y^2}{2\sigma^2}}\int_0^1 z^{-\frac{1}{2}} \frac{1}{1 - \left(1 - \frac{1^2}{\tau^2}\right)z}e^{\frac{y^2}{2\sigma^2}z}dz\\
m''(y) &= \frac{1}{y} m'(y) + \frac{1}{\sqrt{2\pi^3}\sigma\tau} \frac{y^2}{\sigma^4}e^{-\frac{y^2}{2\sigma^2}}\int_0^1 z^{-\frac{1}{2}}(1-z)^2 \frac{1}{1 - \left(1 - \frac{1}{\tau^2}\right)z}e^{\frac{y^2}{2\sigma^2}z}dz
\end{align*}
with the equality $T_\tau(y) = y + \sigma^2 \frac{m'(y)}{m(y)}$. The first upper bound is implied by the property $|T_\tau(y)| < |y|$ and the fact that $(1-z)^2 \leq 1$ for $z \in [0,1]$. The second upper bound can be demonstrated by noting that $(1-z)^2 \leq 1 - z$ for $z \in [0,1]$ and hence:
\begin{equation*}
\var(\theta \mid y) \leq \frac{\sigma^2}{y} T_\tau(y) - (y - T_\tau(y))^2 + y^2\left(1 - \frac{1}{y}T_\tau(y)\right).
\end{equation*}
\end{proof}

\textbf{Proof of Theorem \ref{thm:var}}
\begin{proof}
As in the proof of Theorem \ref{thm:mse} we assume that $\theta_{i} \neq 0$ for $i = 1, \ldots, \tilde p_n$ and $\theta_{i} = 0$ for $i = \tilde p_n+1, \ldots, n$, where $\tilde p_n \leq p_n$ by assumption. We consider the posterior variances for the zero and nonzero means separately. Denote $\zt = \sqrt{2\sigma^{2}\log(1/\tau)}$.\\
\\
\textit{Nonzero means}\\
By applying the same reasoning as in Lemma \ref{lem:diffbound} to the final term of $\var(\theta|y)$ in (\ref{eq:postvar}), we can find a function $\tilde h(y, t)$ such that 
$\var(\theta | y) \leq \tilde h(y, \tau)$, where $\tilde h(y, \tau) \to \sigma^{2}$ as $y \to \infty$ for any fixed $\tau$. If $\tau \to 0$, the function $\tilde h(y, \tau)$ displays the following limiting behaviour for any $c > 1$:
\begin{equation*}
\lim_{\tau \downarrow 0} \sup_{|y| > c\zt} \tilde h(y, \tau) = \sigma^2.
\end{equation*}
Hence, as $\tau \to 0$: $\var(\theta | y) \lesssim \sigma^{2}$, for any $|y|$ that increases as least as fast as $\zt$ when $\tau$ decreases. Now suppose $|y| \leq \zt$. Then, by the bound $\var(\theta \mid y) \leq \sigma^{2} + y^{2}$ from Lemma \ref{lem:postvar}, we find:
\begin{equation*}
\var(\theta \mid y) \leq \sigma^{2} + \zt^{2}.
\end{equation*}
Therefore:
\begin{equation}\label{eq:varnonzero}
\sum_{i=1}^{\tilde p_n} \E_{\theta_{i}} \var(\theta_{i} \mid Y_{i}) \lesssim \tilde p_n(1 + \zt^{2}).
\end{equation}
\textit{Zero means}\\
By the bound $\var(\theta \mid y) \leq \sigma^{2} + y^{2}$, we find for $c \geq 1$:
\begin{align*} 
\E_{0} \var(\theta\mid Y)\1_{\{|Y| > c\zt\}}
&\leq 2\int_{c\zt}^{\infty}(\sigma^{2} + y^{2})\frac{1}{\sqrt{2\pi\sigma^{2}}}e^{-\frac{y^{2}}{2\sigma^{2}}}dy\\
&= 2\sigma^{2}\Phi^{c}\left(\frac{c\zt}{\sigma} \right) + 2\int_{\frac{c\zt}{\sigma}}^{\infty} \sigma^{2}x^{2}\phi(x)dx\\
&\leq 4\sigma^{3} \frac{\phi\left( \frac{c\zt}{\sigma} \right) }{c\zt} + 2\sigma c\zt \phi\left( \frac{c\zt}{\sigma} \right)  
\lesssim \frac{\tau}{\zt} + \zt\tau.
\end{align*}
 For $|y| < c\zt$, we consider the upper bound $\var(\theta \mid y) \leq \left(\frac{\sigma^{2}}{y} + y\right)T_\tau(y) - T_\tau(y)^2$ from Lemma \ref{lem:postvar}. From this bound, we get $\var(\theta \mid y) \leq \frac{\sigma^2}{y}T_\tau(y) + yT_\tau(y)$.  Hence:
 \begin{align}\label{eq:postvarboundintermediate}
\E_{0} \var(\theta \mid Y)\1_{\{|Y| \leq c\zt\}}  &\leq
\sigma^2 \int_{-c\zt}^{c\zt} \frac{1}{y}T_\tau(y)\frac{1}{\sqrt{2\pi\sigma^{2}}} e^{-\frac{y^{2}}{2\sigma^{2}}}dy\notag\\ 
&\quad + \int_{-c\zt}^{c\zt} yT_\tau(y)\frac{1}{\sqrt{2\pi\sigma^{2}}} e^{-\frac{y^{2}}{2\sigma^{2}}}dy.
\end{align}
We bound the first integral from (\ref{eq:postvarboundintermediate}) by applying the first bound on $T_\tau(y)$ from Lemma \ref{lem:postmeanupperbounds}:
 \begin{align*}
\sigma^2 \int_{-c\zt}^{c\zt} \frac{1}{y}T_\tau(y)\frac{1}{\sqrt{2\pi\sigma^{2}}} e^{-\frac{y^{2}}{2\sigma^{2}}}dy
&\leq \sigma^2\int_{-c\zt}^{c\zt} f(\tau)\frac{1}{\sqrt{2\pi\sigma^{2}}} dy\\
&= \sqrt{\frac{2\sigma}{\pi}}c\zt f(\tau) \lesssim \zt\tau,
\end{align*}
because $f(\tau) \leq \frac{2}{3}\tau$. For the second term in (\ref{eq:postvarboundintermediate}), we first note that the second bound from Lemma \ref{lem:postmeanupperbounds} can be relaxed to:
\begin{equation}\label{eq:pmboundrelaxed}
T_\tau(y) \leq \tau y\left(\frac{2}{3}\tau e^{\tau^2 \frac{y^2}{2\sigma^2}} + \frac{2}{\sqrt{a}} e^{\frac{y^2}{2a\sigma^2}} + 2\sqrt{a}\sigma^2\frac{1}{y^2}e^{\frac{y^2}{2\sigma^2}} \right)
\end{equation}
for any $a > 1$ and $\tau < \frac{1}{a}$. By plugging this bound into the second integral of (\ref{eq:postvarboundintermediate}), we get three terms, which we will name $I_1, I_2$ and $I_3$ respectively. We then find, bounding above by the integral over $\mathbb{R}$ instead of $[-c\zt,c\zt]$ for $I_1$ and $I_2$:
\begin{align*} 
I_1 &= \frac{2}{3}\tau^2 \int_{-c\zt}^{c\zt} y^2 \frac{1}{\sqrt{2\pi\sigma^2}}e^{-(1-\tau^2)\frac{y^2}{2\sigma^2}}dy
\leq \frac{2}{3}\tau^2 \frac{\sigma^2}{(1-\tau^2)^\frac{3}{2}} \lesssim \tau^2.\\
I_2 &= \frac{2}{\sqrt{a}} \tau \int_{-c\zt}^{c\zt} y^2 \frac{1}{\sqrt{2\pi\sigma^2}} e^{-\frac{a-1}{a}\frac{y^2}{2\sigma^2}}dy \leq \frac{2a\sigma^2}{(a-1)^\frac{3}{2} }\tau \lesssim \tau.\\
I_3 &= 2\sqrt{a}\sigma^2\tau \int_{-c\zt}^{c\zt} \frac{1}{\sqrt{2\pi\sigma^2}}dy = \frac{2\sqrt{2a}c\sigma}{\sqrt{\pi}}\zt\tau \lesssim \zt\tau.
\end{align*}
And thus:
\begin{equation}\label{eq:varzero}
\sum_{i = \tilde p_n+1}^{n} \E_{0} \var(\theta_{i} \mid Y_{i}) \lesssim (n-\tilde p_n)\left(\zt + \tau + 1 \right)\tau.
\end{equation}
\textit{Conclusion}\\
By (\ref{eq:varnonzero}) and (\ref{eq:varzero}):
\begin{equation*}
\E_{\theta} \sum_{i = 1}^{n} \var(\theta_{i} \mid Y_{i}) \lesssim \tilde p_n(1 + \zt^{2}) + (n-\tilde p_n)\left(\zt + \tau + 1\right)\tau.
\end{equation*}
\end{proof}

\textbf{Proof of Theorem \ref{thm:varlowerbound}}
\begin{proof}
 By expanding $(1-z)^2z^{-\frac{1}{2}} = z^{-\frac{1}{2}} - 2z^{\frac{1}{2}} + z^{\frac{3}{2}}$, we see that the final term in \eqref{eq:postvar} is equal to:
\begin{equation*}
y^2 - 2yT_\tau(y) + y^2 \frac{\ \int_0^1 z^{\frac{3}{2}}\frac{1}{\tau^2 + (1-\tau^2)z }e^{\frac{y^2}{2\sigma^2}z} dz  } {\int_0^1 z^{-\frac{1}{2}}\frac{1}{\tau^2 + (1- \tau^2)z}e^{\frac{y^2}{2\sigma^2}z} dz   }.
\end{equation*} 
As $\frac{T_\tau(y)}{y}$ is non-negative, we can bound the posterior variance from below by the final two terms in (\ref{eq:postvar}). By the above equality, this yields the following lower bound:
\begin{equation*}
\var(\theta\mid y) \geq y^2 \frac{I_{\frac{3}{2}}(y)}{I_{-\frac{1}{2}}(y)} - T_\tau(y)^2 = y^2\left(\frac{I_{\frac{3}{2}}(y)}{I_{-\frac{1}{2}}(y)} - \left(\frac{I_{\frac{1}{2}}(y)}{I_{-\frac{1}{2}}(y)} \right)^2 \right),
\end{equation*}
where $I_k$ is as in Lemma \ref{lem:integralbounds}. We now use the bounds from Lemma \ref{lem:integralbounds} with $a = 2$ and take $\xi$ equal to $c\log(1/\tau)$ for some nonnegative constant $c$. Then $e^\xi = \frac{1}{\tau^c}$ and $e^\frac{\xi}{2} = \frac{1}{\tau^\frac{c}{2}}$. Taking for each bound on $I_k$, $k \in \left\{\tfrac{3}{2}, \tfrac{1}{2}, -\tfrac{1}{2}\right\}$, the term that diverges fastest as $\tau$ approaches zero, we find that the lower bound is asymptotically of the order:
\begin{equation*}
2\sigma^2\xi \left( \frac
{\frac{1}{2\sqrt{2}\xi} \frac{1}{\tau^c}}
{\max\left\{ \frac{2e^{\tau \xi}}{\tau}, \frac{2\sqrt{2}}{\xi} \frac{1}{\tau^c}  \right\} }
- \left( 
\frac{ \frac{\sqrt{2}}{\xi} \frac{1}{\tau^c} }
{\max\{ \frac{e^{\tau^2\xi}}{\tau}, \frac{1}{2\xi}\frac{1}{\tau^c} \}}
\right)^2 \right). 
\end{equation*}
For $c \leq 1$, this reduces to:
\begin{equation*}
\frac{\sigma^2}{2\sqrt{2}} e^{-\tau\xi}\tau^{1-c} - \frac{4\sigma^2}{\xi}e^{-2\tau^2\xi}\tau^{2-2c}.
\end{equation*}
The second term is negligible compared to the first. Hence, we will use the term $\frac{\sigma^2}{2\sqrt{2}} e^{-\tau\xi}\tau^{1-c}$ as our lower bound on $\var(\theta \mid y)$ for $y = \pm \sqrt{2c\sigma^2 \log(1/\tau)} = \sqrt{c}\zt$, where $\zt = \sqrt{2\sigma^2\log(1/\tau)}$. To find the lower bound on $\sum_{i=1}^n \E_{\theta_i} \var(\theta_i \mid Y_i)$, we only need to consider the parameters equal to zero:
\begin{equation} \label{eq:varlowerboundzero}
\sum_{i=1}^n \E_{\theta_i} \var(\theta_i \mid Y_i) \geq (n-p_n) \E_0\var(\theta_i \mid Y_i)\1_{\left\{ |Y_i| \leq \zt \right\} }.
\end{equation}
By the substitution $x = y^2/\zt^2, dy = \frac{ \sigma \sqrt{\log(1/\tau)}}{\sqrt{2x}}dx$, we find:
\begin{align}
E_0\var(\theta_i \mid Y_i)\1_{\{ |Y_i| \leq \zt  \} } 
&\geq
2 \int_0^{\zt } \frac{\sigma^2}{2\sqrt{2}} e^{-\tau \frac{y^2}{2\sigma^2}} \tau^{1 - \frac{y^2}{\zt^2}} \frac{1}{\sqrt{2\pi\sigma^2}} e^{-\frac{y^2}{2\sigma^2}} dy \notag\\
&= \frac{\sigma}{4\sqrt{\pi}} \tau \zt   \int_0^1 \frac{\tau^{\tau x}}{\sqrt{x}}dx
\geq \frac{\sigma}{2\sqrt{\pi}} e^{-\frac{1}{e}} \tau \zt, \label{eq:Thm34Tau}
\end{align}
where in the last step, we used $\tau^{\tau x} \geq \tau^\tau \geq e^{-\frac{1}{e}} $ for $x \in [0, 1], \tau \in (0, 1]$. By plugging this into (\ref{eq:varlowerboundzero}), we find that as $\tau \to 0$:
\begin{equation}\label{eq:lowerboundvarres}
\sum_{i=1}^n \E_{\theta_i} \var(\theta_i \mid Y_i) \gtrsim (n-p_n)\tau\zt,
\end{equation}
finishing the proof for the first statement of the theorem. 

We now consider $\theta$ such that $\theta_i = a_n$ for $i = 1, \ldots, p_n$, and $\theta_i = 0$ for $i = p_n + 1, \ldots, n$, and assume without loss of generality that $a_n > 0$. We wish to find  conditions on $a_n$ such that the lower bound \eqref{eq:lowerboundvarres} is sharp (up to a  constant factor). Denoting  $\zt = \sqrt{2\sigma^2\log(1/\tau)}$, as before, it is sufficient if we can find $a_n$ such that 
$
 \E_{\theta_i = a_n} \var(\theta_i \mid Y_i) \lesssim \tau\zt,
$
because in combination with the bound \eqref{eq:varzero}, this will yield $\sum_{i=1}^n \E_{\theta_i} \var(\theta_i \mid Y_i) \lesssim n\tau\zt$, which is of the same order as \eqref{eq:lowerboundvarres}, as $p_n = o(n)$. Sufficient conditions on $a_n$ can be found by adapting the proof for the `zero means' case of Theorem \ref{thm:var}. 

We first consider $|y_i| > \zt$. By the first bound of Lemma \ref{lem:postvar}:
\begin{align}\label{geval1start}
\E_{\theta_i} \var(\theta_i \mid Y_i)\1_{\{|Y_i| > \zt\}} 
&\leq \int_{\zt}^\infty (\sigma^2 + y^2) \frac{1}{\sqrt{2\pi\sigma^2}} e^{-\frac{(y-a_n)^2}{2\sigma^2}}dy \notag\\
&\quad + \int_{-\infty}^{-{\zt}} (\sigma^2 + y^2) \frac{1}{\sqrt{2\pi\sigma^2}} e^{-\frac{(y-a_n)^2}{2\sigma^2}}dy.
\end{align}
The first integral from \eqref{geval1start} can be split into two parts by splitting up the factor $\sigma^2 + y^2$, the first of which can be bounded, by substituting  $x = (y-a_n)/\sigma$ and applying Mills' ratio:\begin{equation} \label{geval1grens1}
\sigma^2\int_{(\zt-a_n)/\sigma}^\infty \phi(x) dx = \sigma^2 \Phi^c\left(\frac{\zt-a_n}{\sigma}\right) 
\leq \frac{\sigma^3}{\zt - a_n}\phi\left(\frac{\zt-a_n}{\sigma}\right).
\end{equation}
The second of these integrals is, by $y^2 = (y-a_n)^2 - a_n^2 + 2a_ny$, equal to:
\begin{align} \label{geval1stapanders}
\int_{\zt}^\infty (y-a_n)^2&\frac{1}{\sqrt{2\pi\sigma^2}} e^{-\frac{(y-a_n)^2}{2\sigma^2}}dy 
- a_n^2\int_{\zt}^\infty \frac{1}{\sqrt{2\pi\sigma^2}} e^{-\frac{(y-a_n)^2}{2\sigma^2}}dy\notag\\
&+ a_n \int_{\zt}^\infty y\frac{1}{\sqrt{2\pi\sigma^2}} e^{-\frac{(y-a_n)^2}{2\sigma^2}}dy.
\end{align}
The second integral of \eqref{geval1stapanders} can be bounded from below by zero, and the third from above by $a_n\E_{\theta_i}Y_i = a_n^2$. Again substituting $x = (y-a_n)/\sigma$ yields the following upper bound on \eqref{geval1stapanders}: $\sigma^2\int_{(\zt-a_n)/\sigma}^\infty x^2\phi(x)dx + a_n^2$.
Now using the equality $x^2\phi(x) = \phi(x) - \frac{d}{dx}[x\phi(x)]$ and again Mills' ratio, and combining  with \eqref{geval1grens1}, we find the following upper bound on the first integral from \eqref{geval1start}:
\begin{equation} \label{geval1grens3}
\frac{2\sigma^3}{\zt - a_n}\phi\left(\frac{\zt-a_n}{\sigma}\right) + \sigma(\zt-a_n)\phi\left(\frac{\zt-a_n}{\sigma}\right) + a_n^2.
\end{equation}
By substituting $x = -y$ in the second integral from \eqref{geval1start} and then applying the same inequalities to it as to the first integral, the following bound is obtained:
\begin{equation} \label{geval1grens4}
\frac{2\sigma^3}{\zt+a_n}\phi\left(\frac{\zt+a_n}{\sigma} \right) + \sigma(\zt + a_n)\phi\left(\frac{\zt+a_n}{\sigma} \right).
\end{equation}
This bound does not include a term $a_n^2$, because in the step equivalent to \eqref{geval1stapanders}, the identity $y^2 = (y+a_n)^2 - a_n^2 - 2ya_n$ is used, and thus only the integral 
$\int_{\zt}^\infty (y+a_n)^2\frac{1}{\sqrt{2\pi\sigma^2}} e^{-\frac{(y+a_n)^2}{2\sigma^2}}dy $
needs to be bounded in that step. $\E_{\theta_i} \var(\theta\mid Y)\1_{\{|Y| > \zt\}}$ can thus be bounded by the sum of \eqref{geval1grens3} and \eqref{geval1grens4}.  The factor $\phi((\zt+a_n)/\sigma)$ can be bounded from above by $\phi(\zt/\sigma) = \tau/\sqrt{2\pi}$. The factor $\phi((\zt-a_n)/\sigma)$ is equal to $ \frac{1}{\sqrt{2\pi}} e^{-\frac{\zt^2}{2\sigma^2}}e^{-\frac{a_n^2}{2\sigma^2}}e^{\frac{\zt a_n}{\sigma}} 
= \frac{\tau}{\sqrt{2\pi}} e^{-\frac{a_n^2}{2\sigma^2}}e^{\frac{\zt a_n}{\sigma}}$. 
Hence we arrive at the following upper bound:
\begin{equation}\label{geval1resultaat}
\frac{\sigma}{\sqrt{2\pi}}\left[ 
\left(\frac{2\sigma^2}{\zt - a_n} + \zt - a_n\right) e^{-\frac{a_n^2}{2\sigma^2}}e^{\frac{\zt a_n}{\sigma}}
+ \frac{2\sigma^2}{\zt + a_n} + \zt + a_n
\right] \tau
+ a_n^2.
\end{equation}
If $a_n \lesssim1/\zt$, then $ e^{-\frac{a_n^2}{2\sigma^2}}e^{\frac{\zt a_n}{\sigma}} = \mathcal{O}(1)$ and $\zt \pm a_n = \mathcal{O}(\zt)$, yielding an upper bound on \eqref{geval1resultaat} of order $\tau\zt$. 

We now consider $|y_i| \leq \zt$. We use the second bound of Lemma \ref{lem:postvar}:
\begin{align}\label{geval2start}
\E_{\theta_i} \var(\theta_i \mid Y_i)\1_{\{|Y_i| \leq \zt\}} 
&\leq
\sigma^2 \int_{-\zt}^{\zt} \frac{1}{y}T_\tau(y)\frac{1}{\sqrt{2\pi\sigma^2}} e^{- \frac{(y-a_n)^2}{2\sigma^2}}dy \notag\\
&\quad +
\sigma^2 \int_{-\zt}^{\zt} yT_\tau(y)\frac{1}{\sqrt{2\pi\sigma^2}} e^{- \frac{(y-a_n)^2}{2\sigma^2}}dy.
\end{align} 
Applying inequality $\frac{1}{y}T_\tau(y) \leq \frac{2}{3}\tau e^{\frac{y^2}{2\sigma^2}}$ from Lemma \ref{lem:postmeanupperbounds} to the first integral yields the bound:
\begin{align*}
\frac{\sqrt{2}\sigma}{3\sqrt{\pi}}\tau \int_{-\zt}^{\zt} e^{\frac{y^2}{2\sigma^2}} e^{- \frac{(y-a_n)^2}{2\sigma^2}} dy 
&=
\frac{\sqrt{2}\sigma}{3\sqrt{\pi}}\tau e^{-\frac{a_n^2}{2\sigma^2}}  \int_{-\zt}^{\zt} e^{\frac{a_ny}{\sigma^2}}dy
\leq \frac{\sqrt{2}\sigma}{3\sqrt{\pi}}\tau e^{-\frac{a_n^2}{2\sigma^2}} 2\zt e^{\frac{a_n\zt}{\sigma^2}}.
\end{align*}
If $a_n \lesssim 1/\zt$, we have $a_n\zt = \mathcal{O}(1)$ and thus this term will be of order $\tau\zt$. For the second integral from \eqref{geval2start}, we use  bound \eqref{eq:pmboundrelaxed}. This leads to three integrals to be bounded, $I_1, I_2$ en $I_3$. 
\begin{align*}
I_1 
&= \frac{\sigma}{\sqrt{2\pi}}\frac{2}{3}\tau^2 e^{\frac{\tau^2}{1-\tau^2}\frac{a_n^2}{2\sigma^2}} \int_{-\zt}^{\zt} y^2 e^{-\frac{1}{2\sigma^2/(1-\tau^2)}\left(y - \frac{a_n}{1-\tau^2}\right)^2 }dy\\
&\leq  \frac{2}{3}e^{\frac{\tau^2}{1-\tau^2}\frac{a_n^2}{2\sigma^2}} \frac{\sigma^2}{(1-\tau^2)^{3/2}}\left(\sigma^2 + \frac{a_n^2}{1-\tau^2}\right)\tau^2.\\
I_2 
&= \frac{2\sigma}{\sqrt{a}\sqrt{2\pi}}\tau e^{\frac{a_n^2}{(b-1)2\sigma^2}} \int_{-\zt}^{\zt} y^2 e^{-\frac{1}{2\sigma^2 \frac{a}{a-1}} \left(y - \frac{a}{a-1}a_n\right)^2}\\
&\leq  \frac{2}{\sqrt{b}} e^{\frac{a_n^2}{(b-1)2\sigma^2}} \sigma^2 \left(\frac{a}{a-1}\right)^{3/2}\left(\sigma^2 + \frac{a}{a-1}a_n^2\right)\tau.\\
I_3 &= \frac{2\sqrt{a}\sigma^3}{\sqrt{2\pi}}\tau \int_{-\zt}^{\zt} e^{\frac{y^2}{2\sigma^2}} e^{-\frac{(y-a_n)^2}{2\sigma^2}} dy 
\leq \frac{2\sqrt{2a}\sigma^3}{\sqrt{\pi}}e^{-\frac{a_n^2}{2\sigma^2}}e^{\frac{a_n\zt}{\sigma^2}}\tau\zt.
\end{align*}
$I_1, I_2$ and $I_3$ will all be of no larger order than $\tau\zt$ if $a_n \lesssim 1/\zt$.
\end{proof}

\begin{lem}\label{lem:mismatch1}
For all $k \in \mathbb{R}$, $\int_1^y u^ke^u du=y^ke^y(1+\mathcal{O}(1/y))$, as $y \to \infty$.
\end{lem}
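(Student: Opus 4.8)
The plan is to integrate by parts once and then control the resulting lower-order integral by a crude but sufficient estimate. For $y>1$ and any real $k$,
$$\int_1^y u^k e^u\,du = \bigl[u^k e^u\bigr]_1^y - k\int_1^y u^{k-1}e^u\,du = y^k e^y - e - k\int_1^y u^{k-1}e^u\,du.$$
So it suffices to show that $\int_1^y u^{k-1}e^u\,du = \mathcal{O}(y^{k-1}e^y)$ as $y\to\infty$: then the right-hand side equals $y^k e^y + \mathcal{O}(y^{k-1}e^y) = y^k e^y\bigl(1+\mathcal{O}(1/y)\bigr)$, the constant $e$ being absorbed since $y^{k-1}e^y\to\infty$ for every $k$.

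To bound $\int_1^y u^{k-1}e^u\,du$ I would split into two cases according to the sign of $k-1$. If $k\ge 1$, then $u\mapsto u^{k-1}$ is nondecreasing on $[1,y]$, hence $u^{k-1}\le y^{k-1}$ there and $\int_1^y u^{k-1}e^u\,du \le y^{k-1}\int_1^y e^u\,du \le y^{k-1}e^y$. If $k<1$, then $u^{k-1}$ is decreasing, and here I would split the range of integration at $y/2$: on $[y/2,y]$ one has $u^{k-1}\le (y/2)^{k-1}=2^{1-k}y^{k-1}$, so that piece contributes at most $2^{1-k}y^{k-1}e^y$, while on $[1,y/2]$ one has $u^{k-1}\le 1$, so that piece contributes at most $e^{y/2}$, which is $o(y^{k-1}e^y)$. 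In both cases $\int_1^y u^{k-1}e^u\,du=\mathcal{O}(y^{k-1}e^y)$, which finishes the argument; the $\mathcal{O}$-constants are allowed to depend on $k$.

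The only mildly delicate point is the case $k<1$, where the exponent $k-1$ is negative: the naive estimate $u^{k-1}\le 1$ on all of $[1,y]$ only gives $\int_1^y u^{k-1}e^u\,du\le e^y$, which is too weak because it loses the required factor $y^{k-1}$. Splitting the interval near $y/2$ and using monotonicity on each subinterval repairs this. Everything else is routine bookkeeping. (An alternative, essentially equivalent, route is to note $R(y):=\int_1^y u^k e^u\,du - y^k e^y$ satisfies $R'(y)=-k\,y^{k-1}e^y$ and $R(1)=-e$, reducing matters to the same tail estimate.)
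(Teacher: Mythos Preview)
Your proof is correct and follows essentially the same route as the paper's: integrate by parts once, then bound the remaining integral $\int_1^y u^{k-1}e^u\,du$ by splitting the domain at $y/2$ and using monotonicity of $u^{k-1}$ on each piece. Your presentation is in fact slightly more explicit than the paper's in separating the cases $k\ge 1$ and $k<1$.
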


\begin{proof}
For $k = 0$, the statement is immediate. By integration by parts the integral is seen to be equal to 
$y^ke^y-e-\int_1^y ku^{k-1}e^u du$.  For $k \neq 0$, the latter integral is bounded above by
\begin{equation*}
|k|\int_1^{y/2}(1\vee y/2)^{k-1}e^u du+|k|\int_{y/2}^y (y/2\vee y)^{k-1}e^u du.
\end{equation*}
This is further bounded above by a multiple of $(1\vee y^{k-1})e^{y/2}+y^{k-1}e^y$.
\end{proof}

\begin{lem}\label{lem:mismatch2}
Let $I_k$ be as in Lemma \ref{lem:integralbounds}. There exist functions $R_k$ with $\sup_{\zt/4\leq y\leq 4\zt}|R_k(y)| \to 0$ for $k > 0$ and $k = -\tfrac{1}{2}$, such that,
\begin{align*}
I_k(y) &= \left(\tau^{2k}\int_0^1\frac{z^k}{1+z} dz+\frac{2\sigma^2}{y^2}e^{\frac{y^2}{2\sigma^2}}\right)\left(1+R_k(y)\right), \quad \text{for } k > 0,\\
I_{-\frac{1}{2}}(y)&=\left(\tau^{-1}\int_0^\infty\frac{1}{\sqrt{z}(1+z)} dz+ \frac{2\sigma^2}{y^2}e^{\frac{y^2}{2\sigma^2}} \right)
\left(1+R_{-\frac{1}{2}}(y)\right).
\end{align*} 
\end{lem}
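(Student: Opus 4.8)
The plan is to split the range of integration in $I_k(y)$ into a piece near $z=1$, which yields the Laplace-type term $\frac{2\sigma^2}{y^2}e^{y^2/(2\sigma^2)}$, and a piece near $z=0$, which yields $\tau^{2k}$ times a beta-type integral, and then to show that everything left over is of smaller order than the sum of these two, uniformly over $\zt/4\le y\le 4\zt$. Write $\xi=y^2/(2\sigma^2)$, as in the proof of Lemma~\ref{lem:integralbounds}, so the Laplace term equals $\xi^{-1}e^\xi$. On the range $\zt/4\le y\le 4\zt$ one has $\tfrac1{16}\log(1/\tau)\le\xi\le 16\log(1/\tau)$, so $\xi\to\infty$, $\xi\asymp\log(1/\tau)$ and $\xi\tau^2\to0$, all uniformly in $y$; these are the only properties of the range that enter.

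First split $I_k(y)=\int_0^{1/2}+\int_{1/2}^1$. In the second integral write the integrand as $\phi(z)e^{\xi z}$ with $\phi(z)=z^k/(\tau^2+(1-\tau^2)z)$. Since $\phi$ and $\phi'$ are bounded on $[1/2,1]$ uniformly in $\tau$ and $\phi(1)=1$ exactly, a single integration by parts gives
\[
\int_{1/2}^1\phi(z)e^{\xi z}\,dz=\frac1\xi e^\xi\bigl(1+O(1/\xi)\bigr)=\frac{2\sigma^2}{y^2}e^{\frac{y^2}{2\sigma^2}}\bigl(1+o(1)\bigr)
\]
uniformly over the range; this is the only place where the exact value $\phi(1)=1$, hence the constant $2$, is used.

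For the first integral substitute $z=\tau^2w$, which turns it into $\tau^{2k}\int_0^{1/(2\tau^2)}w^k e^{\eta w}/(1+(1-\tau^2)w)\,dw$ with $\eta:=\xi\tau^2\to0$. If $k>0$ I cut at $w=1$: on $[0,1]$ one has $e^{\eta w}\in[1,e^\eta]$ and $1-\tau^2\to1$, so this part equals $\tau^{2k}\int_0^1\frac{z^k}{1+z}\,dz\,(1+o(1))$, which is exactly the claimed first term; the leftover $\int_{\tau^2}^{1/2}z^k e^{\xi z}/(\tau^2+(1-\tau^2)z)\,dz$ I bound by a constant times $\int_{\tau^2}^{1/2}z^{k-1}e^{\xi z}\,dz$, split at $z=1/\xi$, integrate the power over $[\tau^2,1/\xi]$ and bound the power by $\xi^{1-k}\vee1$ over $[1/\xi,1/2]$, obtaining $O\bigl((\xi^{-k}\vee\xi^{1-k})e^{\xi/2}\bigr)=o(\xi^{-1}e^\xi)$. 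If $k=-\tfrac12$ I cut instead at $w=W_\tau:=(\tau^2\log(1/\tau))^{-1/2}$, which depends only on $\tau$ and satisfies $W_\tau\to\infty$, $\eta W_\tau\to0$ and $W_\tau\le1/(2\tau^2)$, all uniformly on the range; on $[0,W_\tau]$ the two-sided comparison $\frac1{1+w}\le\frac1{1+(1-\tau^2)w}\le\frac1{(1-\tau^2)(1+w)}$ together with $1\le e^{\eta w}\le e^{\eta W_\tau}$ sandwiches the integral between quantities tending to $\int_0^\infty\frac{dw}{\sqrt w(1+w)}=\pi$, so this part is $\tau^{-1}\int_0^\infty\frac{dz}{\sqrt z(1+z)}\,(1+o(1))$. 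The remaining tail $z\in[z_0,1/2]$ with $z_0=\tau^2W_\tau=\tau/\sqrt{\log(1/\tau)}$ is bounded by a constant times $\int_{z_0}^{1/2}z^{-3/2}e^{\xi z}\,dz$, which, split at $z=1/\xi$, is $O(z_0^{-1/2})+O(\xi^{1/2}e^{\xi/2})=o(\tau^{-1})+o(\xi^{-1}e^\xi)$.

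Adding the three pieces, $I_k(y)$ equals the sum of the two claimed terms, each carrying a factor $1+o(1)$, plus an error that is $o(\xi^{-1}e^\xi)$ and hence $o$ of that sum; dividing by the sum produces $1+R_k(y)$ with $\sup_{\zt/4\le y\le4\zt}|R_k(y)|\to0$. The main obstacle is the intermediate region between ``near $0$'' and ``near $1$'': the crude estimate ``integrand $\le$ maximum $\times$ length'' is too lossy there (for $k=-\tfrac12$ it would spuriously produce a factor $\tau^{-3/2}$), so one must split it at $z\asymp1/\xi$ and integrate the polynomial factor honestly against the nearly constant exponential on the left and against $e^{\xi z}$ on the right. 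A secondary delicate point is choosing the cutoff $W_\tau$ for $k=-\tfrac12$ free of $y$ while keeping $W_\tau\to\infty$ and $\xi\tau^2W_\tau\to0$ uniform over the whole range $[\zt/4,4\zt]$.
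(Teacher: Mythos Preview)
Your proof is correct and follows the same strategy as the paper: isolate the contribution near $z=1$ (Laplace term $\xi^{-1}e^\xi$), isolate the contribution near $z=0$ (the $\tau^{2k}$ or $\tau^{-1}$ term after rescaling $z\mapsto z/\tau^2$), and show the intermediate region is negligible relative to their sum, all uniformly on $\zt/4\le y\le 4\zt$ via $\xi\asymp\log(1/\tau)$ and $\xi\tau^2\to0$.

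The tactical organization differs somewhat. The paper splits first at $z=\tau^2$ (the natural crossover of the denominator $\tau^2+(1-\tau^2)z$), then substitutes $u=\xi z$ on $[\tau^2,1]$ and invokes the separately stated asymptotic $\int_1^\xi u^{k-1}e^u\,du=\xi^{k-1}e^\xi(1+O(1/\xi))$ for the Laplace term; for $k=-\tfrac12$ it obtains the $\tau^{-1}$ term in two pieces, $\int_0^1+\int_1^\infty$, which are combined at the end. You instead split first at $z=1/2$ and get the Laplace term by a direct integration by parts on $[1/2,1]$, avoiding a separate lemma; for $k=-\tfrac12$ your choice of the intermediate cutoff $W_\tau=(\tau^2\log(1/\tau))^{-1/2}$, depending only on $\tau$, produces $\int_0^\infty\frac{dw}{\sqrt w(1+w)}$ in one shot and keeps the uniformity in $y$ transparent. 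Both routes give the same result with the same amount of work; the paper's split at $\tau^2$ is perhaps more canonical, while your direct integration by parts is marginally more self-contained.
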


\begin{proof}
We split the integral in the definition of $I_k$ over the intervals $[0,\tau^2]$ and $[\tau^2,1]$. The first interval contributes, uniformly in $y\tau \to 0$,
\begin{align}
\int _0^{\tau^2} \frac{z^ke^{\frac{y^2}{2\sigma^2}z}}{\tau^2+(1-\tau^2)z} dz
&=\int _0^{\tau^2} \frac{z^k}{\tau^2+(1-\tau^2)z} dz\,(1+o(1))\notag\\
&=\tau^{2k} \int _0^{1} \frac{u^k}{1+(1-\tau^2)u} du\,(1+o(1)),\label{eq:lemA6int01}
\end{align}
by the substitution $u = z/\tau^2$. The integral tends to $\int _0^{1} \frac{u^k}{1+u}\,du$, by the dominated convergence theorem, for any $k>-1$.  The second interval contributes, with the substitution $u = (y^2/2\sigma^2)z$:
\begin{align*}
\int _{\tau^2}^1 \frac{z^ke^{\frac{y^2}{2\sigma^2}z}}{\tau^2+(1-\tau^2)z} dz
&= \left(\frac{2\sigma^2}{y^2}\right)^k \left(\int_{\frac{y^2}{2\sigma^2}\tau^2}^1+\int_1^{\frac{y^2}{2\sigma^2}}\right)
\frac{u^ke^u}{\frac{y^2}{2\sigma^2}\tau^2+(1-\tau^2)u} du.
\end{align*}
In the second integral the argument satisfies $u \geq 1$, and hence $u/((y^2\tau^2/(2\sigma^2)+(1-\tau^2)) \to 1$, uniformly in $u$ and $y\tau \to 0$. Hence 
\begin{align*}
 \left(\frac{2\sigma^2}{y^2}\right)^k \int_1^{\frac{y^2}{2\sigma^2}} \frac{u^ke^u}{\frac{y^2}{2\sigma^2}\tau^2+(1-\tau^2)u} du
&\asymp  \left(\frac{2\sigma^2}{y^2}\right)^k\int_1^\frac{y^2}{2\sigma^2}u^{k-1}e^u du \\
&\asymp \frac{2\sigma^2}{y^2}e^{\frac{y^2}{2\sigma^2}}(1+o(1))
\end{align*}
as $y \to \infty$, by Lemma \ref{lem:mismatch1}. For the first integral we separately consider the cases $k>0$ and $k=-1/2$. If $k>0$, then $\int_0^1 u^{k-1}e^u du$ converges, and hence, by the dominated convergence theorem,
uniformly in $y\tau \to 0$,
\begin{equation*}
 \left(\frac{2\sigma^2}{y^2}\right)^k \int_{\tau^2\frac{y^2}{2\sigma^2}}^1 \frac{u^ke^u}{\frac{y^2}{2\sigma^2}\tau^2+(1-\tau^2)u} du
\to  \left(\frac{2\sigma^2}{y^2}\right)^k \int_0^1 u^{k-1}e^u du.
\end{equation*}
If $k=-1/2$ , then we substitute $v = 2\sigma^2u/(\tau^2y^2)$ and rewrite the integral  as
\begin{equation*}
\left(\frac{2\sigma^2}{y^2}\right)^{-\frac{1}{2}}\int_1^{\frac{2\sigma^2}{\tau^2y^2}}\frac{v^{-\frac{1}{2}}e^{\frac{\tau^2y^2}{2\sigma^2}v}}{1+(1-\tau^2)v} \left(\frac{\tau^2y^2}{2\sigma^2}\right)^{-\frac{1}{2}}dv
= \frac{1}{\tau} \int_1^\infty \frac{v^{-1/2}}{1+v} dv (1+o(1)).
\end{equation*}
This combines with the integral \eqref{eq:lemA6int01}.
\end{proof}

\textbf{Proof of Theorem \ref{thm:mismatch}}
\begin{proof}
Denote $\zt = \sqrt{2\sigma^2\log(1/\tau)}$ and assume that $\theta_i = \gamma\zt$ for $i = 1, \ldots, p_n$ and $\theta_i = 0$ for $i = p_n+1, \ldots, n$. We prove \eqref{eq:mismatchbias} by proving that there exists a positive constant $c_1(\gamma)$ such that
\begin{equation}\label{eq:mismatchbiasgoal}
\E_{\theta = \gamma\zt} T_\tau(Y) = \tau^{(1-\gamma)^2}\zt^{2\gamma-2}c_1(\gamma)(1+o(1)).
\end{equation} 
If \eqref{eq:mismatchbiasgoal} holds, we have, by Jensen's inequality:
\begin{equation}\label{eq:misbiaslower}
\sum_{i=1}^{p_n} \E_{\theta_i}(T_\tau(Y_i) - \theta_i)^2 \geq p_n(\tau^{(1-\gamma)^2}\zt^{2\gamma-2}c_1(\gamma) - \gamma\zt)^2 \gtrsim p_n\zt^2,
\end{equation}
as $\tau \to 0$. In addition, we have $T_\tau(y) = yI_{\frac{1}{2}}(y)/I_{-\frac{1}{2}}(y)$. For $|y| = \sqrt{2\sigma^2c\log(1/\tau)}$, with $c > 1$, the lower bound \eqref{eq:lower1/2} on $I_\frac{1}{2}(y)$ behaves as $(\sigma^2/y^2)e^\frac{y^2}{2\sigma^2}$, while the upper bound \eqref{eq:upper-1/2} on $I_{-\frac{1}{2}}(y)$ behaves as $(2a\sqrt{a}\sigma^2/y^2)e^\frac{y^2}{2\sigma^2}$, as $\tau \to 0$. Therefore, for $|y| > \zt$, we have $T_\tau(y) \gtrsim y$. Thus, we can bound by:
\begin{align}
\sum_{i=p_n+1}^n \E_{\theta_i} T_\tau(Y_i)^2 
&\geq (n-p_n) \E_{\theta = 0} T_\tau(Y)^2\1_{\{|Y|>\zt\}} \gtrsim (n-p_n)\int_{\frac{\zt}{\sigma}}^\infty y^2 \phi(y) dy\notag \\ 
&= (n-p_n) \left(\int_{\frac{\zt}{\sigma}}^\infty \phi(y) dy + \frac{\zt}{\sigma}\phi\left(\frac{\zt}{\sigma}\right) \right)
\gtrsim (n-p_n) \zt \phi\left(\frac{\zt}{\sigma}\right)\notag \\ 
&=  (n-p_n)\frac{1}{\sqrt{2\pi}} \tau\zt. \label{eq:misbiaslowerzero}
\end{align}
By combining the lower bounds \eqref{eq:misbiaslower} and \eqref{eq:misbiaslowerzero} with the upper bound \eqref{eq:mseupperbound}, we arrive at \eqref{eq:mismatchbias}. For the posterior variance, we already have $\sum_{i=p_n+1}^n \var(\theta_i \mid Y_i) \asymp (n-p_n)\tau\zt$ by \eqref{eq:varzero} and \eqref{eq:Thm34Tau}. Expression \eqref{eq:mismatchvar} can therefore be proven by showing that there exists a positive constant $c_2(\gamma)$ such that:
\begin{equation} \label{eq:mismatchvargoal}
\E_{\theta = \gamma\zt} \var(\theta \mid Y) = \tau^{(1-\gamma)^2}\zt^{2\gamma-1}c_2(\gamma)(1+o(1)).
\end{equation}

\textit{Proof of \eqref{eq:mismatchbiasgoal}}\\
The expected value $\E_{\theta = \gamma \zt}T_\tau(Y)$ is equal to
\begin{equation}\label{eq:mismatchbiasintegrals}
\frac{1}{\sigma}\left(\int_{-\infty}^{-\frac{\zt}{2}}+\int_{-\frac{\zt}{2}}^{3\zt}+\int_{3\zt}^\infty\right)
(\zt+y) \frac{I_{\frac{1}{2}}(\zt + y)}{I_{-\frac{1}{2}}(\zt + y)}\phi\left(\frac{y + (1-\gamma)\zt}{\sigma}\right) dy.
\end{equation}
We shall show that the first and third integrals are negligible, while the second gives the approximation in \eqref{eq:mismatchbiasgoal}. On the domain of the second integral, we have $\zt/4 \leq \zt + y \leq 4\zt$, so we can apply Lemma \ref{lem:mismatch2} to see that this integral is asymptotic to
\begin{equation}\label{eq:biassecond}
\frac{1}{\sigma}\int_{-\frac{\zt}{2}}^{3\zt}(\zt + y)\frac{
c_2\tau^2(\zt+y)^2 + 2\sigma^2e^{\frac{y^2+2y\zt}{2\sigma^2}}
}
{
c_1(y+\zt)^2 + 2\sigma^2e^{\frac{y^2+2y\zt}{2\sigma^2}}
}\phi\left(\frac{y + (1-\gamma)\zt}{\sigma}\right) dy,
\end{equation}
where $c_1 = \int_0^\infty z^{-1/2}(1-z)^{-1}dz$ and $c_2 = \int_0^1 z^{1/2}(1-z)^{-1}dz$. On $[-\zt/2, 3\zt]$:
\begin{align*}
c_2\tau^2(\zt+y)^2\phi\left(\frac{y + (1-\gamma)\zt}{\sigma}\right) &\leq \frac{c_2}{\sqrt{2\pi}}\tau^2(4\zt)^3e^{\frac{(1/2-\gamma)^2\zt^2}{2\sigma^2}} \\
&= \frac{64c_2}{\sqrt{2\pi}}\zt^3\tau^{2-(1/2-\gamma)^2},
\end{align*}
so \eqref{eq:biassecond} is asymptotic to:
\begin{equation*}
\mathcal{O}(\tau) + \frac{2\sigma}{\sqrt{2\pi}}e^{-\frac{(1-\gamma)^2\zt^2}{2\sigma^2}}\int_{-\frac{\zt}{2}}^{3\zt} \frac{(\zt+y)e^{\frac{\gamma\zt y}{\sigma^2}} }{c_1(y+\zt)^2 + 2\sigma^2e^{\frac{y^2+2y\zt}{2\sigma^2}}} dy.
\end{equation*}
By the substitution $u = \zt y - 2\sigma^2\log\zt$, the remaining integral is equal to, with $a_\tau =-\frac{\zt^2}{2} - 2\sigma^2\log\zt$ and $b_\tau = 3\zt^2 - 2\sigma^2\log\zt$:
\begin{align*}
\frac{2\sigma}{\sqrt{2\pi}}\tau^{(1-\gamma)^2}&\frac{1}{\zt} \int_{a_\tau}^{b_\tau}
 \frac{
(\zt + \zt^{-1}(u + 2\sigma^2\log\zt))e^\frac{\gamma u}{\sigma^2}\zt^{2\gamma}
}
{
c_1(\zt + \zt^{-1}(u + 2\sigma^2\log\zt))^2 + 2\sigma^2 e^\frac{u}{\sigma^2}\zt^2 e\frac{(u+2\sigma^2\log\zt)^2}{2\sigma^2\zt^2}
} du\\
&\sim \frac{2\sigma}{\sqrt{2\pi}}\tau^{(1-\gamma)^2}\frac{1}{\zt}\int_{-\infty}^\infty \frac{\zt e^\frac{\gamma u}{\sigma^2}\zt^{2\gamma}}{(c_1 + 2\sigma^2e^\frac{u}{\sigma^2})\zt^2}du,
\end{align*}
by the dominated convergence theorem. This yields the approximation in \eqref{eq:mismatchbiasgoal}, with $c_1(\gamma) = (2\sigma/\sqrt{2\pi})\int_{-\infty}^\infty e^\frac{\gamma u}{\sigma^2}/(c_1 + 2\sigma^2 e^\frac{u}{\sigma^2})du$.

For the first integral in \eqref{eq:mismatchbiasintegrals}, we use bound 1 from Lemma \ref{lem:postmeanupperbounds}, and obtain a bound on its absolute value equal to
\begin{align}
\frac{1}{\sigma} \int_{-\infty}^{-\frac{\zt}{2}} & |\zt + y| \tau e^\frac{ (\zt+y)^2}{2\sigma^2} \phi\left(\frac{y + (1-\gamma)\zt}{\sigma}\right) dy  \notag\\
&= \frac{2}{3\sqrt{2\pi}\sigma}\tau^{(1-\gamma)^2} \int_{-\infty}^{-\frac{\zt}{2}} |\zt + y| e^\frac{\gamma\zt y}{\sigma^2}dy
\lesssim \tau^{(1-\gamma)^2}e^{-\frac{\gamma\zt^2}{2\sigma^2}} = \tau^{(1-\gamma)^2 + \gamma}, \label{eq:mismatchbiasint1}
\end{align}
where the last inequality follows by integration by parts. This is of much smaller order than the second integral from \eqref{eq:mismatchbiasintegrals}. In the third integral of \eqref{eq:mismatchbiasintegrals}, we bound $I_\frac{1}{2}(\zt + y)/I_{-\frac{1}{2}}(\zt + y)$ by 1, giving the upper bound
\begin{equation*}
\frac{1}{\sigma}\int_{3\zt}^\infty (\zt +y) \phi\left(\frac{y+(1-\gamma)\zt}{\sigma}\right)dy \lesssim \phi\left(\frac{3\zt + (1-\gamma)\zt}{\sigma}\right) = \frac{1}{\sqrt{2\pi}}\tau^{4-\gamma},
\end{equation*}
by Mills' ratio. This is also of much smaller order than the second integral from \eqref{eq:mismatchbiasintegrals}, thus concluding the proof of \eqref{eq:mismatchbiasgoal}.

\textit{Proof of \eqref{eq:mismatchvargoal}}\\
By expanding the term $(1-z)^2$ in the numerator of the final term of \eqref{eq:postvar}, the posterior variance can be seen to be equal to:
\begin{equation}
\var(\theta \mid y) = \sigma^2\frac{I_\frac{1}{2}(y)}{I_{-\frac{1}{2}}(y)} + y^2\left[\frac{I_\frac{3}{2}(y)}{I_{-\frac{1}{2}}(y)} - \left( \frac{I_\frac{1}{2}(y)}{I_{-\frac{1}{2}}(y)}
\right)^2 \right]. \label{eq:varexpressionI}
\end{equation}
Because $I_\frac{1}{2}(y)/I_{-\frac{1}{2}}(y)$ can be interpreted as the mean of the density proportional to $z \to z^{-1/2}e^{y^2z/(2\sigma^2)}/(\tau^2 + (1-\tau^2)z)$, and $I_\frac{3}{2}(y)/I_{-\frac{1}{2}}(y)$ as the second moment, it follows that the term in square brackets in \eqref{eq:varexpressionI} is nonnegative. By \eqref{eq:varexpressionI}, we write:
\begin{align} \label{eq:mismatchvarintegrals}
\E_{\theta = \gamma \zt} \var(\theta \mid Y) &=\sigma \int \frac{ I_\frac{1}{2}(\zt + y)}{I_{-\frac{1}{2}}(\zt+y)} \phi\left(\frac{y + (1-\gamma)\zt}{\sigma}\right) dy \notag\\
&\quad +\frac{1}{\sigma}\left( \int_{-\infty}^{-\frac{\zt}{2}} + \int_{-\frac{\zt}{2}}^{3\zt} + \int_{3\zt}^\infty\right) (\zt + y)^2\notag \\
&\quad \cdot\left[
\frac{I_\frac{3}{2}(\zt + y)}{I_{-\frac{1}{2}}(\zt+y)} - \left(
\frac{I_\frac{1}{2}(\zt+y)}{I_{-\frac{1}{2}}(\zt + y)}
\right)^2
\right]
\phi\left(\frac{y + (1-\gamma)\zt}{\sigma}\right) dy.
\end{align} 
The first term in \eqref{eq:mismatchvarintegrals} is as \eqref{eq:mismatchbiasintegrals}, except without the factor $(\zt + y)$. Following the same steps as the proof of \eqref{eq:mismatchbiasgoal}, we see that it is smaller than a multiple of $\zt^{-1}$ times the bound on \eqref{eq:mismatchbiasintegrals}, so it is of the order $\zt^{2\gamma-3}\tau^{(1-\gamma)^2}$. The first and third integrals of the second term of \eqref{eq:mismatchvarintegrals} are also negligible. For the first, we use that the expression in square brackets is nonnegative and bounded above by $I_\frac{3}{2}(y)/I_{-\frac{1}{2}}(y)$, which in turn is bounded above by $I_\frac{1}{2}(y)/I_{-\frac{1}{2}}(y)$. We bound as in \eqref{eq:mismatchbiasint1}, with the difference that the leading factor is $(\zt+y)^2$ instead of $(\zt+y)$. This leads to the order $\zt\tau^{(1-\gamma)^2+\gamma}$, much smaller than the claimed rate. For the third integral, we can bound the term in square brackets by 1 and use Mills' ratio to see that it is of the order $\zt\tau^{(4-\gamma)^2}$.

We are left with the middle integral of the second term of \eqref{eq:mismatchvarintegrals}. On the domain of this integral, by Lemma \ref{lem:mismatch2}:
\begin{equation*}
\frac{I_\frac{3}{2}(\zt+y)}{I_{-\frac{1}{2}}(\zt+y)} = \frac{c_3\tau^4(\zt+y)^2 + 2\sigma^2 e^\frac{y^2+2y\zt}{2\sigma^2}}{c_1(\zt+y)^2 + 2\sigma^2e^\frac{y^2+2y\zt}{2\sigma^2}}(1+o(1)),
\end{equation*}
where $c_3 = \int_0^1 z^{3/2}(1+z)^{-1}dz$, and $c_1$ is as in \eqref{eq:biassecond}. We see that $I_\frac{3}{2}(y)/I_{-\frac{1}{2}}(y)$ and $I_\frac{1}{2}(y)/I_{-\frac{1}{2}}(y)$ are asymptotic to the same function on this domain. Since $A/(A+B) - A^2/(A+B)^2 = AB/(A+B)^2$, it follows that up to $\mathcal{O}(\tau)$, the middle integral is asymptotic to
\begin{align*}
\frac{1}{\sigma} \int_{-\frac{\zt}{2}}^{3\zt} &(\zt+y)^2 \frac{c_1(\zt+y)^22\sigma^2 e^\frac{y^2+2y\zt}{2\sigma^2}}{\left(c_1(\zt+y)^2 + 2\sigma^2e^\frac{y^2+2y\zt}{2\sigma^2} \right)^2} 
\phi\left( \frac{y+(1-\gamma)\zt}{\sigma}\right) dy\\
&= \frac{2\sigma c_1}{\sqrt{2\pi}} \tau^{(1-\gamma)^2} \int_{-\frac{\zt}{2}}^{3\zt} \frac{(\zt+y)^4e^\frac{\gamma\zt y}{\sigma^2}}{
\left(c_1(\zt+y)^2 + 2\sigma^2e^\frac{y^2+2\zt y}{2\sigma^2}\right)^2
}dy.
\end{align*}
We substitute $u = \zt y - 2\sigma^2\log\zt$ to reduce this to 
\begin{align*}
\frac{2\sigma c_1}{\sqrt{2\pi}} \tau^{(1-\gamma)^2}&\frac{1}{\zt} \int_{-\frac{\zt^2}{2}-2\sigma^2\log\zt}^{3\zt^2-2\sigma^2\log\zt} \frac{
(\zt + \zt^{-1}(u + 2\sigma^2\log\zt))^4e^\frac{\gamma u}{\sigma^2}\zt^{2\gamma}
}
{\left(
c_1(\zt + \zt^{-1}(u + 2\sigma^2\log\zt))^2 + 2\sigma^2 e^\frac{u}{\sigma^2}\zt^2
\right)^2} du\\
&\sim \frac{2\sigma c_1}{\sqrt{2\pi}} \tau^{(1-\gamma)^2}\frac{1}{\zt} \int_{-\infty}^\infty \frac{\zt^4 e^\frac{\gamma u}{\sigma^2}\zt^{2\gamma}}
{\left(
c_1\zt^2 + 2\sigma^2\zt^2e^\frac{u}{\sigma^2}
\right)^2} du.
\end{align*}
This is asymptotic to  expression \eqref{eq:mismatchvargoal}, with $c_2(\gamma) = (2\sigma c_1/\sqrt{2\pi})\int_{-\infty}^\infty e^\frac{\gamma u}{\sigma^2}/(c_1 + 2\sigma^2e^\frac{u}{\sigma^2})^2du$.

\end{proof}

\textbf{Proof of Theorem \ref{thm:empiricalbayes}}
\begin{proof}
Suppose that $Y \sim \mathcal{N}(\theta, \sigma^2I_n)$, $\theta \in \ell_0[p_n]$. We adapt the approach in paragraph 6.2 in \citep{Johnstone2004}. We first derive the following inequality for events $A$ such that $\widehat\tau > \tau$ holds with probability one on $A$:
\begin{align}
\E_\theta(T_{\widehat\tau}(Y_i) - \theta_i)^2\1_A 
&\leq 2\E_\theta(T_{\widehat\tau}(Y_i) - Y_i)^2 \1_A+ 2\E_\theta(Y_i - \theta_i)^2\1_A \notag\\
&\lesssim 2\E_\theta\zeta_{\widehat\tau}^2\1_A + 2\sigma^2 \E_\theta Z^2\1_A\label{eq:bound70a}
\end{align}
where (\ref{eq:maxdiff}) was used in the second line, and $Z$ follows a standard normal distribution. If $A$ is such that $\widehat\tau > \tau$ holds with probability one on $A$, we can use the inequality $\zeta_{\widehat\tau} < \zt$ if $\widehat\tau > \tau$ to find:
\begin{align}
\E_\theta(T_{\widehat\tau}(Y_i) - \theta_i)^2\1_A  &\lesssim
  2\zt^2\mathbb{P}_\theta(A) +  2\sigma^2 \E_\theta Z^2\1_A\label{eq:bound70},
\end{align}

  We now consider the nonzero and zero parameters separately. For both cases, we split up the expected $\ell_2$ loss as follows: 
\begin{equation*}
\E_{\theta}(T_{\widehat\tau}(Y_i) - \theta_i)^2= 
\E_{\theta}(T_{\widehat\tau}(Y_i) - \theta_i)^2\1_{\{\widehat\tau > c\tau \}} + \E_{\theta}(T_{\widehat\tau}(Y_i) - \theta_i)^2\1_{\{\widehat\tau \leq c\tau \}},
\end{equation*}
and then bound each of terms on the right hand side. For the nonzero means, we take $c = 1$, while for the zero means, we consider $c \geq 1$. Note that for $\zeta_{\widehat\tau}$ to be well-defined, we need $\widehat\tau \leq 1$ and consequently, when we consider $\widehat\tau > c\tau$, we must have $c\tau < 1$. 

\textit{Nonzero means}\\
By (\ref{eq:bound70}), we find:
\begin{equation}
\E_\theta (T_{\widehat\tau}(Y_i) - \theta_i)^2\1_{\{\widehat\tau > \tau \}} 
\lesssim 2\zt^2 + 2\sigma^2. \label{eq:lemma5a}
\end{equation}
If $\widehat\tau \leq \tau$, the inequality $\zeta_{\widehat\tau}^2 \leq \zt^2$ needed for (\ref{eq:bound70}) does not hold. For this case, we assume that $\widehat\tau \geq g(n, p_n)$ with probability one, for some function $g(n, p_n)$, corresponding to $\zeta_{\widehat\tau} \leq \sqrt{-2\sigma^2 \log g(n, p_n)}$. Then we find by (\ref{eq:bound70a}):
\begin{align}
\E_\theta (T_{\widehat\tau}(Y_i) - \theta_i)^2\1_{\{\widehat\tau \leq \tau \}}
&\lesssim 2\E_\theta \zeta_{\widehat\tau}^2\1_{\{\widehat\tau \leq \tau \}} + 2\sigma^2  \notag\\
&\leq -4\sigma^2\log (g(n, p_n))\mathbb{P}_\theta(\widehat\tau \leq \tau) + 2\sigma^2 \label{eq:lemma6}.
\end{align}

By (\ref{eq:lemma5a}) and (\ref{eq:lemma6}), we have for $\theta_i \neq 0$:
\begin{align}
\E_{\theta}(T_{\widehat\tau}(Y_i) - \theta_i)^2 
&\lesssim 1 + \zt^2  - \log(g(n, p_n)) \mathbb{P}_\theta(\widehat\tau \leq \tau). \label{eq:empiricalbayesnonzero}
\end{align}
\textit{Zero means}\\
We first establish an inequality for $\E_\theta[ Z^2\1_{A}]$, where $A$ is an event and $Z$ a standard normal random variable. By Young's inequality, we have for any positive $x$ and $y$:
\begin{align*}
xy \leq \int_0^x (e^s - 1) ds + \int_0^y \log(s+1) ds = e^x - x - 1 + (y+1)\log(y+1) - y.
\end{align*}
By this inequality combined with the inequality $\log(y+1) < y$,  we have:
\begin{align*}
\E_\theta Z^2\1_A
&\leq cd \E_\theta\left[ e^\frac{Z^2}{c} - \frac{Z^2}{c} - 1  \right] + cd\mathbb{P}_\theta(A)\left( \frac{1}{d}\log\left(\frac{1}{d}+1\right) - \frac{1}{d}\right).
\end{align*}
With $c = 3$ and $d = \mathbb{P}_\theta(A)$, we find:
\begin{align}\label{eq:resultyoung}
\E_\theta Z^2\1_A &\leq (3\sqrt{3} - 4)\mathbb{P}_\theta(A) + 3\mathbb{P}_\theta(A)\log\left(1 + \frac{1}{\mathbb{P}_\theta(A)} \right)\notag \\
&< 5\mathbb{P}_\theta(A)\log\left(1 + \frac{1}{\mathbb{P}_\theta(A)} \right).
\end{align}

By (\ref{eq:bound70}) and (\ref{eq:resultyoung}), we get for any $c \geq 1$ such that $c\tau < 1$:
\begin{align}
\E_\theta (T_{\widehat\tau}(Y_i) - \theta_i)^2\1_{\{\widehat\tau > c\tau \}} 
&\lesssim 2\zt^2 \mathbb{P}_\theta\left( \widehat\tau > c\tau\right) \notag\\
&\quad + 10\sigma^2 \mathbb{P}_\theta\left(\widehat\tau > c\tau\right)\log\left(1 + \frac{1}{\mathbb{P}_\theta\left(\widehat\tau > c\tau\right)} \right). \label{eq:lemma5b}
\end{align}

Now suppose $\widehat\tau \leq c\tau$ for some $c \geq 1$ such that $c\tau < 1$. First note that $|T_\tau(y)|$ increases monotonically in $\tau$, as is clear from 
\begin{equation*}
T_{\tau}(y_i) = \E[(1-\kappa_i)y_i \mid y_i, \tau] = \E\left[\left.\frac{\tau^2\lambda_i^2}{1 + \tau^2\lambda_i^2}y_i \ \right| \ y_i , \tau\right].
\end{equation*}
Because sign$(T_{\widehat\tau}(y_i))$ = sign$(T_{c\tau}(y_i))$ and $0 \leq |T_{\widehat\tau}(y_i)| \leq |T_{c\tau}(y_i)|$, we have:
\begin{equation*}
\left(T_{\widehat\tau}(y_i) - \theta_i\right)^2 \leq \max\{\theta_i^2, (T_{c\tau}(y_i) - \theta_i)^2\} \leq \theta_i^2 + (T_{c\tau}(y_i) - \theta_i)^2.
\end{equation*}
Hence:
\begin{equation*}
\E_\theta (T_{\widehat\tau}(Y_i) - \theta_i)^2\1_{\{\widehat\tau \leq c\tau \}} \leq \theta_i^2 + \E_\theta(T_{c\tau}(Y_i) - \theta_i)^2 .
\end{equation*}
And thus, by  (\ref{eq:goalmsezero}), we have for $\theta_i = 0$:
\begin{equation}\label{eq:lemma5c}
\E_\theta (T_{\widehat\tau}(Y_i) - \theta_i)^2\1_{\{\widehat\tau \leq c\tau \}}  \lesssim \zeta_{c\tau}c\tau \lesssim \zt\tau.
\end{equation}
Combining (\ref{eq:lemma5b}) and (\ref{eq:lemma5c}), we find:
\begin{align}
\E_{\theta}T_{\widehat\tau}(Y_i) ^2
&\lesssim \zt\tau + \zt^2 \mathbb{P}_\theta\left( \widehat\tau > c\tau\right) + \mathbb{P}_\theta\left(\widehat\tau > c\tau\right)\log\left(1 + \frac{1}{\mathbb{P}_\theta\left(\widehat\tau > c\tau\right)} \right).\label{eq:empiricalbayeszero}
\end{align}
\textit{Conclusion}\\
We can now bound the expected $\ell_2$ loss. We assume that $\theta_i \neq 0$ for $i = 1, \ldots,\tilde p_n$ and $\theta_i = 0$ for $i = \tilde p_n + 1, \ldots, n$, where $\tilde p_n \leq p_n$. By combining (\ref{eq:empiricalbayesnonzero}) and (\ref{eq:empiricalbayeszero}), we find:
\begin{align}
\E_\theta \|T_{\widehat \tau}(Y) - \theta\|^2 &\lesssim \tilde p_n\left(1 + \zt^2 - \log(g(n,p_n))\mathbb{P}_\theta(\widehat\tau \leq \tau)\right) + (n-\tilde p_n)\zt\tau \notag \\
&\quad + (n - \tilde p_n) \mathbb{P}_\theta\left(\widehat\tau > c\tau\right)\left(\zt^2 + \log\left(1 + \frac{1}{\mathbb{P}_\theta\left(\widehat\tau > c\tau\right)} \right)\right). \label{eq:empiricalbayesfinal}
\end{align}
The function $x\log\left(1 + \frac{1}{x}\right)$ is monotonically increasing in $x$ for $x \in [0,1]$. Hence, with the choice $\tau = \frac{p_n}{n}$ or $\tau = \frac{p_n}{n}\sqrt{\log(n/p_n)}$, the conditions stated in the theorem are sufficient for (\ref{eq:empiricalbayesfinal}) to be bounded by the minimax squared error rate in the worst case. 

If an estimator $\widehat\tau$ satisfies only the first condition, then $\sup\{\frac{1}{n}, \widehat\tau\}$ satisfies the second condition with $-\log g(n, p_n)  = \log n$. By the assumption $p_n \to \infty$, we have $\mathbb{P}_\theta(\sup\{\frac{1}{n}, \widehat\tau\} > c\frac{p_n}{n}) \leq \mathbb{P}_\theta(\widehat\tau > c\frac{p_n}{n}) $. Plugging this into inequality (\ref{eq:empiricalbayesfinal}) yields an $\ell_2$ risk of at most order $p_n\log n$. 
\end{proof}

\begin{lem}\label{lem:schatter}
Suppose $Y_i \sim \mathcal{N}(\theta_i, \sigma^2), i = 1, \ldots, p_n$ and $Y_i \sim \mathcal{N}(0, \sigma^2), i = p_n+1, \ldots, n$ and define 
\begin{equation*}
\widehat\tau =  \frac{\#\{|y_i| \geq \sqrt{c_1\sigma^2\log n}, i = 1, \ldots, n \}}{c_2n}
\end{equation*}
 for some $c_2 > 1$. Then $\mathbb{P}_\theta\left(\widehat\tau > \tau \right) \lesssim \frac{p_n}{n}$ as $p_n, n \to \infty$, $p_n = o(n)$ if $c_1 > 2$, or $c_1 = 2$ and $p_n \lesssim \log n$ for $\tau = \frac{p_n}{n}$ or $\tau = \frac{p_n}{n}\sqrt{\log(n/p_n)}$. 
\end{lem}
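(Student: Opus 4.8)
I would begin by splitting the exceedance count in the numerator of $\widehat\tau$ according to whether the index carries a nonzero or a zero mean: writing $q_n := \mathbb{P}(|Z| \ge \sqrt{c_1\log n})$ for a standard normal $Z$, we have $\#\{|Y_i| \ge \sqrt{c_1\sigma^2\log n}\} = N_1 + N_0$ with $N_1 := \#\{i \le p_n : |Y_i| \ge \sqrt{c_1\sigma^2\log n}\}$ and $N_0 := \#\{i > p_n : |Y_i| \ge \sqrt{c_1\sigma^2\log n}\}$. Deterministically $N_1 \le p_n$, whereas $N_0 \sim \mathrm{Bin}(n-p_n, q_n)$ since $Y_i/\sigma$ is standard normal for $i > p_n$. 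On the event $\{\widehat\tau > \tau\} = \{N_1 + N_0 > c_2 n\tau\}$ one has $N_0 > c_2 n\tau - N_1 \ge c_2 n\tau - p_n$; for $\tau = p_n/n$ the latter equals $(c_2-1)p_n$, and for $\tau = \tfrac{p_n}{n}\sqrt{\log(n/p_n)}$ it equals $p_n\bigl(c_2\sqrt{\log(n/p_n)}-1\bigr) \ge (c_2-1)p_n$ once $n$ is large, because $\log(n/p_n)\to\infty$ as $p_n = o(n)$. Hence in both cases $\{\widehat\tau > \tau\} \subseteq \{N_0 \ge m_n\}$, where $m_n$ is the smallest integer exceeding $(c_2-1)p_n$, so that $m_n \asymp p_n \to \infty$, and it suffices to bound $\mathbb{P}(N_0 \ge m_n)$.

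Next I would control $q_n$ by Mills' ratio, $\Phi^c(x) \le \phi(x)/x$, which gives $q_n \le 2\phi(\sqrt{c_1\log n})/\sqrt{c_1\log n} = \tfrac{2}{\sqrt{2\pi c_1\log n}}\,n^{-c_1/2}$, hence $\E N_0 \le n q_n \le \mu_n := \tfrac{2}{\sqrt{2\pi c_1\log n}}\,n^{1-c_1/2}$. A union bound over the size-$m_n$ subsets of the zero coordinates, combined with $\binom{n}{m_n} \le (en/m_n)^{m_n}$, yields
\begin{equation*}
\mathbb{P}(N_0 \ge m_n) \;\le\; \binom{n-p_n}{m_n} q_n^{m_n} \;\le\; \Bigl(\frac{e\,n q_n}{m_n}\Bigr)^{m_n} \;\le\; \Bigl(\frac{e\,\mu_n}{m_n}\Bigr)^{m_n},
\end{equation*}
and it then remains to show the right-hand side is at most a constant multiple of $p_n/n$; equivalently, after taking logarithms, that $m_n\bigl(1 + \log\mu_n - \log m_n\bigr) \le \log p_n - \log n + O(1)$.

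If $c_1 > 2$ then $\mu_n \to 0$, indeed $\log\mu_n \le (1-c_1/2)\log n$ for $n$ large; dropping $-\log m_n \le 0$ and absorbing the constant, $1 + \log\mu_n - \log m_n \le -\tfrac{c_1-2}{4}\log n$ for $n$ large, so $\mathbb{P}(N_0 \ge m_n) \le n^{-(c_1-2)m_n/4} \le n^{-1}$ eventually, using $m_n \to \infty$. If $c_1 = 2$ one only gets $\mu_n \le \mathrm{const}/\sqrt{\log n}$, which is $o(1)$ but no longer a negative power of $n$; here I would use the standing assumption (as in the second part of the lemma) that $p_n$ is at least of order $\log n$, so $m_n \gtrsim \log n$, whence $e\mu_n/m_n \lesssim (\log n)^{-3/2}$ and $1 + \log\mu_n - \log m_n \le -\tfrac32\log\log n + O(1)$, giving $\mathbb{P}(N_0 \ge m_n) \le \exp\bigl(-c\,(\log n)(\log\log n)\bigr) \le n^{-1}$ for $n$ large. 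In either case $\mathbb{P}(\widehat\tau > \tau) \le n^{-1} \le p_n/n$, which is the assertion. (For $\tau = \tfrac{p_n}{n}\sqrt{\log(n/p_n)}$ one may alternatively just note $\{\widehat\tau > \tau\} \subseteq \{\widehat\tau > p_n/n\}$ and invoke the previous case.)

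I expect the boundary case $c_1 = 2$ to be the main obstacle. There the per-coordinate false-exceedance probability $q_n$ decays only like $n^{-1}(\log n)^{-1/2}$, so the expected number of false exceedances among the zero coordinates, $\mu_n \asymp (\log n)^{-1/2}$, is merely $o(1)$ rather than polynomially small in $n$. To beat the target rate $p_n/n$ — and not merely obtain some vanishing bound — one must exploit that at least $m_n \asymp p_n$ of these rare events have to occur simultaneously, i.e., use the factorial (Chernoff-type) gain $(e\mu_n/m_n)^{m_n}$, and that gain suffices only once $p_n$ itself is at least of order $\log n$; keeping the bound at the exact order $p_n/n$, rather than at any $o(1)$ rate, is the delicate point of the argument.
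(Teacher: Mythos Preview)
Your proposal is correct and follows essentially the same route as the paper. Both arguments split off the at most $p_n$ exceedances from the nonzero coordinates deterministically, reduce to a binomial tail $\mathbb{P}\bigl(\mathrm{Bin}(n-p_n,q_n)>(c_2-1)p_n\bigr)$ for the zero coordinates, bound $q_n$ via Mills' ratio, and then apply the Chernoff-type inequality $\mathbb{P}(X\ge k)\le(enp/k)^k$; your union-bound derivation $\binom{n-p_n}{m_n}q_n^{m_n}\le(e n q_n/m_n)^{m_n}$ is exactly this inequality, which the paper simply cites. The case split $c_1>2$ versus $c_1=2$ (the latter needing $p_n\gtrsim\log n$, as you correctly read despite the typo ``$\lesssim$'' in the lemma statement) is also identical in spirit; the paper just compares $-\log$ of the bound to $\log(n/p_n)$, whereas you push through to $\le n^{-1}\le p_n/n$.
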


\begin{proof}
We only need to consider $\mathbb{P}_\theta(\widehat\tau > \frac{p_n}{n})$, as we assume $p_n = o(n)$ and thus, for large $n$, $\mathbb{P}_\theta(\widehat\tau > \frac{p_n}{n}\sqrt{\log(n/p_n)}) \leq \mathbb{P}_\theta(\widehat\tau > \frac{p_n}{n})$.
Define $A_i = \{|y_i| \geq \sqrt{c_1\sigma^2\log n}\}, i = 1, \ldots, n$. For $i = p_n +1, \ldots, n$, $\1_{A_i}$ follows a Bernoulli distribution with parameter $q_n = 2\Phi^c(\sqrt{c_1\log n})$, which by Mills' ratio can be bounded from above by $\sqrt{\frac{2}{c_1\pi}} \left(\log n\right)^{-\frac{1}{2}} n^{-\frac{c_1}{2}}$. For $X \sim \text{Bin}(n, p)$, we have the bound $\mathbb{P}(X \geq k) \leq \left(\frac{enp}{k}\right)^k$ as a consequence of Theorem 1 in \citep{Chernoff1952}. Hence:
\begin{align}
\mathbb{P}_\theta\left(\widehat\tau > \frac{p_n}{n}\right) 
&\leq \mathbb{P}_\theta \left(\sum_{i=p_n+1}^n \1_{A_i} > (c_2-1)p_n  \right) 
\leq \left(\frac{e(n-p_n)q_n}{(c_2-1)p_n+1}\right)^{(c_2-1)p_n + 1}\notag\\
&\leq \left(\sqrt{\frac{2e^2}{c_1\pi}} \frac{1}{(c_2-1)p_n + 1} \frac{1}{\sqrt{\log n}} n^{1-\frac{c_1}{2}}\right)^{(c_2-1)p_n + 1}.\label{eq:chernoff}
\end{align}
The inequality   $\mathbb{P}_\theta\left(\widehat\tau > \frac{p_n}{n}\right) \lesssim \frac{p_n}{n}$ holds if $-\log \mathbb{P}_\theta\left(\widehat\tau > \frac{p_n}{n}\right) \geq \log \frac{n}{p_n} + c$ holds for some positive constant $c$. The negative logarithm of bound \eqref{eq:chernoff} is:
\begin{equation*}
((c_2-1)p_n+1)\left(\frac{1}{2}\log \frac{c_1\pi}{2e^2} + \log((c_2-1)p_n+1) + \frac{1}{2}\log\log n + \left(\frac{c_1}{2} - 1 \right)\log n\right).
\end{equation*}
For $c_1 = 2$, this quantity will exceed $\log\frac{n}{p_n}$ if $p_n \gtrsim \log n$. If $c_1 > 2$, we require $((c_2-1)p_n + 1)(\tfrac{c_1}{2}-1) \geq 1$, which is certainly satisfied if $p_n \to \infty$.

\end{proof}

\vspace*{-0.5cm}
\bibliographystyle{imsart-nameyear}
\bibliography{horseshoe}

\end{document}